\renewcommand{\labelenumi}{$\mathrm{(\roman{enumi})}$}
\renewcommand{\labelenumii}{$\mathrm{(\alph{enumii})}$}
\title{Completeness of derived interleaving distances and \\ sheaf quantization of non-smooth objects\footnote{2020 Mathematics Subject Classification: 37J12, 55N31, 18G80, 35A27
\newline 
Keywords: interleaving distance, $C^0$-symplectic geometry, microlocal theory of sheaves}
}
\author{Tomohiro Asano \and Yuichi Ike}
\date{\today}
\begin{document}
\maketitle

\begin{abstract}
    We develop sheaf-theoretic methods to deal with non-smooth objects in symplectic geometry. 
    We show the completeness of a derived category of sheaves with respect to the interleaving distance and construct a sheaf quantization of a Hamiltonian homeomorphism.
    We also develop Lusternik--Schnirelmann theory in the microlocal theory of sheaves. 
    With these new sheaf-theoretic methods, we prove an Arnold-type theorem for the image of a compact exact Lagrangian submanifold under a Hamiltonian homeomorphism.
\end{abstract}

\section{Introduction}

The microlocal theory of sheaves due to Kashiwara and Schapira~\cite{KS90} has been effectively applied to symplectic and contact geometry after the pioneering work by Nadler--Zaslow~\cite{NZ,Nad} and Tamarkin~\cite{Tamarkin}. 
In this paper, we apply the theory to symplectic geometry for non-smooth objects, in particular, limits of smooth objects.

\subsection{Our results}

In this work, we use the Tamarkin category, which was introduced by Tamarkin~\cite{Tamarkin} to prove his non-displaceability theorem. 
Let $X$ be a connected $C^\infty$-manifold without boundary and let $\pi \colon T^*X \to X$ denote its cotangent bundle equipped with the canonical symplectic structure. 
We also let $(t;\tau)$ denote the homogeneous coordinate system on $T^*\bR_t$ and fix a field $\bfk$. 
The Tamarkin category $\cD(X)$ is defined to be the left orthogonal of the triangulated subcategory consisting of objects microsupported in $\{ \tau \le 0\} \subset T^*(X \times \bR_t)$ in $\SD(\bfk_{X \times \bR_t})$, the derived category of sheaves on $X \times \bR_t$. 
To generalize Tamarkin's non-displaceability theorem to a quantitative version, the authors introduced the interleaving-like pseudo-distance on $\cD(X)$ in \cite{AI20}, motivated by the convolution distance on $\SD(\bfk_{\bR^n})$ due to Kashiwara--Schapira~\cite{KS18persistent}.
In this paper, we introduce a modified pseudo-distance $d_{\cD(X)}$ on $\cD(X)$ that is possibly larger than the previous one.
See \cref{subsec:tamarkin} for the details of the Tamarkin category and the pseudo-distance $d_{\cD(X)}$.

In this paper, we first establish the completeness of the Tamarkin category $\cD(X)$ with respect to $d_{\cD(X)}$ in \cref{section:limit}.

\begin{theorem}[{see \cref{corollary:limit_Tamarkin}}]\label{theorem:complete_intro}
	The Tamarkin category $\cD(X)$ is complete with respect to the pseudo-distance $d_{\cD(X)}$.
\end{theorem}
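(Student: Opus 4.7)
The overall strategy is the standard one for Cauchy-completeness in settings like the convolution distance of Kashiwara--Schapira: extract a fast-converging subsequence from any Cauchy sequence, build its limit as a sequential homotopy colimit in the ambient triangulated category $\SD(\bfk_{M\times \bR_t})$, and verify both that this limit lies in $\cD(M)$ and that it is close to every tail of the sequence.

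Concretely, let $\{F_n\}_{n \geq 1}$ be a Cauchy sequence in $(\cD(M), d_{\cD(M)})$. After passing to a subsequence I may assume $d_{\cD(M)}(F_n, F_{n+1}) < 2^{-n}$; fix interleaving morphisms $\alpha_n \colon F_n \to T_{2^{-n}} F_{n+1}$ and $\beta_n \colon F_{n+1} \to T_{2^{-n}} F_n$ realizing this, where $T_c$ denotes the Tamarkin shift functor by $c$. Setting $s_n := \sum_{k<n} 2^{-k} \in [0,1)$ and $H_n := T_{s_n - 1} F_n \in \cD(M)$, the $\alpha_n$'s assemble into a sequential diagram $H_1 \to H_2 \to \cdots$ in $\cD(M)$, and I define the candidate limit as the homotopy colimit
\[
F_\infty := \mathrm{hocolim}_n\, H_n,
\]
constructed via the usual telescope triangle $\bigoplus_n H_n \xrightarrow{\mathrm{id} - \mathrm{shift}} \bigoplus_n H_n \to F_\infty \xrightarrow{+1}$ in $\SD(\bfk_{M\times \bR_t})$. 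Since $\cD(M)$ is the left orthogonal of a full subcategory of $\SD(\bfk_{M\times \bR_t})$, it is closed under arbitrary coproducts and cones, and therefore contains $F_\infty$.

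To prove $d_{\cD(M)}(F_n, F_\infty) \to 0$, I would construct, for each $n$, both sides of an interleaving with parameter $\delta_n := 2^{1-n}$. Since $s_n + \delta_n = 1$, applying the autoequivalence $T_{\delta_n}$ to the structural arrow $H_n \to F_\infty$ yields the forward morphism $F_n \to T_{\delta_n} F_\infty$. For the backward morphism, note that for each $k \geq n$ the iterated composition of $\beta_j$'s ($n \leq j < k$), with the appropriate intermediate shifts, produces a morphism $F_k \to T_{s_k - s_n} F_n$; post-composing with the canonical shift $T_{s_k - s_n} F_n \to T_{\delta_n} F_n$ then gives compatible morphisms $H_k \to T_{\delta_n} F_n$ which should factor through $F_\infty$ by the universal property of the homotopy colimit, producing the sought-after $F_\infty \to T_{\delta_n} F_n$. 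The two interleaving identities between these morphisms are then deduced from the corresponding identities for the pairs $(\alpha_k, \beta_k)$.

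The main obstacle is the strict compatibility of the backward morphisms with the transition maps of the telescope: the defining identity $T_{2^{-n}}\beta_n \circ \alpha_n = (\text{canonical shift } F_n \to T_{2^{1-n}} F_n)$ only becomes an equality after an additional translation, so the squares between consecutive $H_k$'s will commute only up to this canonical shift rather than on the nose. Making the construction of $F_\infty \to T_{\delta_n} F_n$ rigorous will likely require a $\varprojlim^1$-vanishing argument for the groups $\mathrm{Hom}(H_k, T_{\delta_n} F_n)$ associated to a sequential homotopy colimit (to lift the compatible-up-to-shift family to a genuine morphism out of $F_\infty$), or equivalently working inside a dg or stable $\infty$-categorical enhancement of $\SD(\bfk_{M\times \bR_t})$ where such coherence data is intrinsic. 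Once this is handled, the estimate $d_{\cD(M)}(F_n, F_\infty) \leq \delta_n = 2^{1-n} \to 0$ is immediate, and completeness follows.
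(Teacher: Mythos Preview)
Your setup --- shift the objects so the $\alpha$'s point forward and take the homotopy colimit --- is exactly the paper's, as is the observation that $\cD(M)$ is closed under coproducts and cones. The gap is in the closeness estimate, and not quite where you locate it. If you upgrade from weakly $(a,b)$-isomorphic to $(a,b)$-isomorphic (a factor of $2$, cf.\ the remark following the definition), the identity $T_{a_k}\beta_k\circ\alpha_k=\tau_{0,2a_k}(F_k)$ is an honest equality, and naturality of $\tau$ then shows that your backward maps $g_k\colon H_k\to T_{\delta_n}F_n$ \emph{are} strictly compatible with the transition maps; so a lift $g\colon F_\infty\to T_{\delta_n}F_n$ exists, and the first interleaving identity $T_{\delta_n}g\circ f=\tau_{0,2\delta_n}(F_n)$ follows immediately from $g\circ\rho_n=g_n$. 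The genuine obstacle is the \emph{second} identity $T_{\delta_n}f\circ g=\tau_{0,2\delta_n}(F_\infty)$: here you are comparing two morphisms out of a homotopy colimit, and agreement after precomposition with every $\rho_k$ does not suffice in a bare triangulated category. This is the coherence failure your $\varprojlim^1$/enhancement remark gestures at, but you do not resolve it.

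The paper sidesteps the backward map entirely. Its key lemma is that if $(\alpha,\beta)$ is an $(a,b)$-isomorphism then $\Cone(\alpha)$ is $2(a+b)$-torsion (proved by two applications of the octahedral axiom). Hence each $\Cone(G_n\to G_m)$ is $4a_{\ge n}$-torsion, uniformly in $m\ge n$; a $3\times 3$ diagram comparing the telescope for $(G_m)_{m\ge n}$ against the constant system on $G_n$ then exhibits $\Cone(G_n\to F_\infty)$ as $8a_{\ge n}$-torsion. The lemma ``torsion cone in a triangle $\Rightarrow$ weak $(0,c)$-isomorphism between the other two vertices'' finishes. No morphism out of $F_\infty$ is ever constructed by hand, so no coherence issue arises.
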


In fact, we prove the completeness result for a wider class of distances associated with thickening kernels, which was introduced by Petit--Schapira~\cite{petit2020thickening}. 
See the body of the paper for a precise statement.

In \cref{section:SQ_diffeo_homeo}, we revisit sheaf quantization of Hamiltonian isotopies~\cite{GKS} and Hamiltonian stability~\cite{AI20}. 
Let $M$ be a $C^\infty$-manifold and $I$ be an open interval containing the closed interval $[0,1]$.
With the Hamiltonian isotopy $\phi^{H} \colon T^*M \times I \to T^*M$ associated with a timewise compactly supported function $H \colon T^*M \times I \to \bR$, one can associate a canonical object $K^{H} \in \SD(\bfk_{M^2 \times \bR_t \times I})$, called the sheaf quantization or the Guillermou--Kashiwara--Schapira (GKS) kernel. 
We prove that the restriction $K^H|_{M^2 \times \bR_t \times \{1\}}$ depends only on the time-1 map of $\phi^H_1$. 
This allows us to define the sheaf quantization $K^\varphi \in \SD(\bfk_{M^2 \times \bR_t})$ of a compactly supported Hamiltonian diffeomorphism $\varphi \in \Ham_c(T^*M)$. 
The sheaf quantization $K^{\varphi}$ defines an object $\cK^\varphi$ of the Tamarkin category $\cD(M^2)$. 
For these objects, we prove the following.

\begin{theorem}[{see \cref{theorem:stability_hofer}}]\label{theorem:stability_intro}
	For compactly supported Hamiltonian diffeomorphisms $\varphi, \varphi' \in \Ham_c(T^*M)$, one has 
	\begin{equation}
		d_{\cD(M^2)}(\cK^\varphi,\cK^{\varphi'}) \le d_H(\varphi,\varphi'),
	\end{equation}
	where the right-hand side denotes the Hofer distance between $\varphi$ and $\varphi'$.
\end{theorem}

\cref{theorem:stability_intro} is stronger than the previous Hamiltonian stability result in \cite{AI20} in the following two points: (1)~it is about the distance between GKS kernels and (2)~the distance $d_{\cD(X)}$ is possibly larger than that in \cite{AI20}. 

By the completeness result and the stability result, we can associate a sheaf with an element of the completion of $\Ham_c(T^*M)$ with respect to the Hofer metric. 
Indeed, a Cauchy sequence $(\varphi_n)_{n \in \bN} \subset \Ham_c(T^*M)$ with respect to the Hofer metric $d_H$ defines a Cauchy sequence $(\cK^{\varphi_n})_{n \in \bN} \subset \cD(M^2)$ with respect to $d_{\cD(M^2)}$ by \cref{theorem:stability_intro}.
Hence, by applying \cref{theorem:complete_intro}, we obtain a limit object $\cK^{[(\varphi_n)_{n}]} \in \cD(M^2)$. 
In particular, we can define the sheaf quantization $\cK^{\varphi_\infty}$ of a Hamiltonian homeomorphism $\varphi_\infty$ (see \cref{definition:hameo} for the definition).
One of the advantages of this approach is that we may directly use the limit object without explicitly dealing with a limit of some sequence to study $C^0$-symplectic geometry.

Next, in \cref{section:spectral}, we develop Lusternik--Schnirelmann theory in the microlocal theory of sheaves. 
More precisely, for an object $F \in \cD(M)$, we define the set of spectral invariants $\Spec(F) \subset \bR$ and prove the following. 
Here, $\cl(M)$ denotes the cup-length of $M$ over $\bfk$.

\begin{theorem}[{see \cref{theorem:sheaf_spectral_invariant} for a more precise statement}]\label{theorem:LS_intro}
	Let $t \colon M \times \bR_t \to \bR_t$ and $\pi_M \colon T^*(M \times \bR_t) \to M$ be the projections.
	Let $F \in \cD(M)$ and assume that $M$ is compact and $\RG_{M \times [-c,\infty)}(M \times \bR_t;F) \simeq \RG(M;\bfk_M)$ and $F|_{M \times (c,\infty)}$ is locally constant for $c \gg 0$.
	If $\# \Spec(F) \le \cl(M)$, then there exists $c \in \Spec(F)$ such that $\pi_M(\MS(F) \cap \Gamma_{dt} \cap \pi^{-1} t^{-1}(-c))$ is cohomologically non-trivial in $M$, where $\Gamma_{dt} \subset T^*(M \times \bR_t)$ is the graph of the $1$-form $dt$. 
	That is, for any open neighborhood $U$ of $\pi_M(\MS(F) \cap \Gamma_{dt} \cap \pi^{-1} t^{-1}(-c))$, the restriction map $\bigoplus_{n \ge 1} H^n(M;\bfk) \to \bigoplus_{n \ge 1} H^n(U;\bfk)$ is non-zero.
\end{theorem}

The theorem above was announced to appear in Humili\`ere--Vichery~\cite{HV}, which motivated our work.

Finally, in \cref{section:arnold}, by combining the machinery we have developed, we give a purely sheaf-theoretic proof of the following Arnold-type theorem for non-smooth objects (cf.\ Buhovsky--Humili\`ere--Seyfaddini~\cite{buhovsky2019arnold}).
We let $0_M$ denote the zero-section of $T^*M$.

\begin{theorem}[{see \cref{theorem:ineq_hameo}}]\label{theorem:inequality_intro}
	Assume that $M$ is compact and let $L$ be a compact exact Lagrangian submanifold of $T^*M$.
	Let $\varphi_\infty$ be a Hamiltonian homeomorphism of $T^*M$. 
	If the number of spectral invariants of $\varphi_\infty(L)$ is smaller than $\cl(M)+1$, then $0_M \cap \varphi_\infty(L)$ is cohomologically non-trivial in $M$, hence it is infinite.
\end{theorem}

This theorem is proved with the sheaf quantization of $L$ due to Guillermou~\cite{Gu12,Gu23} and Viterbo~\cite{Viterbo-Sheaves}, the sheaf quantization of $\varphi_\infty$, and \cref{theorem:LS_intro}.
By combining our machinery with the $C^0$-continuity of the spectral norm, which is obtained in the field of symplectic geometry, we can also construct a sheaf quantization of the image of the zero-section $0_M$ under a $C^0$-limit of Hamiltonian diffeomorphisms. 
Note that $C^0$-limits of Hamiltonian diffeomorphisms is a more general notion than Hamiltonian homeomorphisms. 
With the sheaf quantization, we can recover an Arnold-type theorem for a $C^0$-limit of Hamiltonian diffeomorphisms, which is exactly a result by Buhovsky--Humili\`ere--Seyfaddini~\cite{buhovsky2019arnold} (see \cref{proposition:arnold_hamhomeo}).

We also give a sheaf-theoretic proof of a Legendrian analogue of \cref{theorem:inequality_intro}. 
More precisely, we give an Arnold-type theorem for Hausdorff limits of Legendrian submanifolds in a 1-jet bundle (cf.\ \cite[Thm.~1.5]{buhovsky2019arnold}).

In \Cref{section:support_condition}, we give a Hamiltonian stability result with support conditions, which may be of independent interest.

\subsection{Related work}

The completeness of a persistence category was studied by Cruz~\cite{cruz2019metric} and Scoccola~\cite{scoccola2020locally}.
They showed that if the category admits any sequential (co)limit, then there is a limit object for any Cauchy sequence with respect to the interleaving distance. 
In this paper, we work with derived categories and need a different argument. 
Our completeness result (\cref{theorem:complete_intro}) also holds in a triangulated category endowed with a persistence structure.
See also Biran--Cornea--Zhang~\cite{biran2021triangulation} for persistence structures for triangulated categories.
Recently Fukaya~\cite{fukaya2021gromov} introduced the Gromov--Hausdorff distance between filtered $A_\infty$ categories, whose idea is based on the interleaving distance, and proved a completeness result.
During the preparation of this paper, the authors learned from St\'ephane Guillermou that he and Claude Viterbo had independently obtained a similar completeness result in a derived category of sheaves \cite{guillermou2022gamma}.

The persistence method has been widely applied to symplectic and contact geometry. 
After the pioneering work by Polterovich--Shelukhin~\cite{PS16}, persistence modules have been used to study barcodes of Floer cohomology complexes in Usher--Zhang~\cite{UZ16} and the study of spectral norms in Kislev--Shelukhin~\cite{kislev2018bounds}, to name a few. 
In this paper, we also investigate the relation between the interleaving-like distance $d_{\cD(M)}$ and the spectral norm (see \cref{proposition:spectral_norm_distance}).
Similar results are independently discovered by Guillermou and Viterbo~\cite{guillermou2022gamma}.

Kashiwara--Schapira~\cite{KS18persistent} studied persistence modules from the point of view of the microlocal theory of sheaves. 
Motivated by the work, Asano--Ike~\cite{AI20} introduced the interleaving-like distance $d_{\cD(M)}$ and showed that the distance between an object and its Hamiltonian deformation is upper bounded by the Hofer norm. 
The result was effectively used in Chiu~\cite{chiu2021quantum} and Li~\cite{li2021estimating}.
See also Zhang~\cite{Zhang20} for the interleaving-like distance in the Tamarkin category.

Buhovsky--Humili\`ere--Seyfaddini~\cite{buhovsky2018c} constructed a counterexample of Arnold's conjecture for a Hamiltonian homeomorphism of a closed symplectic manifold $M$ with $\dim M \ge 4$. 
However, one still obtains an Arnold-type theorem for a Hamiltonian homeomorphism if one reformulates the conjecture with the notion of spectral invariants, as in \cite{buhovsky2021action,kawamoto2019c,buhovsky2019arnold}.
In these studies, the $C^0$-continuity of persistence modules associated with Hamiltonian diffeomorphisms was effectively used (see also \cite{roux2018barcodes}). 
Guillermou~\cite{guillermou2013gromov} (see also \cite[Part~VII]{Gu23}) applied the microlocal theory of sheaves to $C^0$-symplectic geometry. 
He gave a purely sheaf-theoretic proof of the Gromov--Eliashberg theorem, using the involutivity of microsupports.

\subsection{Organization}

This paper is structured as follows. 
In \cref{section:microlocal_sheaf}, we recall some basics of the microlocal theory of sheaves. 
In \cref{section:tamarkin_SQ}, we first recall the interleaving distance for sheaves associated with a thickening kernel.
We then briefly review the Tamarkin category and sheaf quantization of Hamiltonian isotopies. 
In \cref{section:limit}, we prove the completeness of the derived category of sheaves with respect to the distance associated with a thickening kernel. 
This completeness in particular implies \cref{theorem:complete_intro}.
In \cref{section:SQ_diffeo_homeo}, we first prove a Hamiltonian stability theorem in terms of GKS kernels and the modified distance. 
We then show that the restriction of the sheaf quantization of a Hamiltonian isotopy to time $1$ depends only on the time-1 map, which allows us to define the sheaf quantization of a Hamiltonian diffeomorphism.
We also prove \cref{theorem:stability_intro} and construct a sheaf quantization of a Hamiltonian homeomorphism.
In \cref{section:spectral}, we develop Lusternik--Schnirelmann theory for the Tamarkin category and prove \cref{theorem:LS_intro}.
We prove \cref{theorem:inequality_intro} in \cref{section:arnold} and its Legendrian analogue in \cref{section:legendrian}.
In \Cref{section:support_condition}, we prove a Hamiltonian stability result with support conditions, by using sheaf quantization of 2-parameter Hamiltonian isotopies.

\subsection*{Acknowledgments}

The authors would like to express their gratitude to Vincent Humili\`ere for fruitful discussion on many parts of this paper. 
They also thank St\'ephane Guillermou and Takahiro Saito for helpful discussion, and Yusuke Kawamoto for drawing their attention to applications of the microlocal theory of sheaves to $C^0$-symplectic geometry. 
They are also grateful to Wenyuan Li, Tatsuki Kuwagaki, Morimichi Kawasaki, and Pierre Schapira for their helpful comments.
The authors also thank the anonymous referees for their helpful comments, which improved many parts of the paper.
TA was supported by Innovative Areas Discrete Geometric Analysis for Materials Design (Grant No.~17H06461).
YI was supported by JSPS KAKENHI (Grant No.~21K13801 and 22H05107) and ACT-X, Japan Science and Technology Agency (Grant No.~JPMJAX1903).

\section{Microlocal theory of sheaves}\label{section:microlocal_sheaf}

Throughout this paper, let $\bfk$ be a field.
We mainly follow the notation of \cite{KS90}. 
In this section, let $X$ be a $C^\infty$-manifold without boundary.

\subsection{Geometric notions}\label{subsection:geometric}

For a locally closed subset $Z$ of $X$, we denote by $\overline{Z}$
its closure and by $\Int(Z)$ its interior.
We also denote by $\delta_X \colon X \to X \times X, x \mapsto (x,x)$ the diagonal map and by $\Delta_X \coloneqq \delta_X(X)$ the diagonal of $X \times X$.
We often write $\Delta$ for $\Delta_X$ for simplicity. 
We denote by $TX$ the tangent bundle and by $T^*X$ the cotangent bundle of $X$.
We write $\pi \colon T^*X \to X$ for the projection.
For a closed submanifold $M$ of $X$, we denote by $T^*_MX$ the conormal bundle to $M$ in $X$.
In particular, $T^*_XX$ denotes the zero-section of $T^*X$, which we often simply write $0_X$.
We set $\rT X\coloneqq T^*X \setminus 0_X$.

With a morphism of manifolds $f \colon X \to Y$, we associate the following commutative diagram of morphisms of manifolds:
\begin{equation}\label{diag:fpifd}
	\begin{aligned}
		\xymatrix{
			T^*X \ar[d]_-{\pi} & X \times_Y T^*Y \ar[d] \ar[l]_-{f_d}
			\ar[r]^-{f_\pi} & T^*Y \ar[d]^-{\pi} \\
			X \ar@{=}[r] & X \ar[r]_-f & Y,
		}
	\end{aligned}
\end{equation}
where $f_\pi$ is the projection and $f_d$ is induced by the transpose
of the tangent map $f' \colon TX \to X \times_Y TY$.

We denote by $(x;\xi)$ a local homogeneous coordinate system on
$T^*X$.
The cotangent bundle $T^*X$ is an exact symplectic manifold with the
Liouville 1-form $\alpha_{T^*X}=\langle \xi, dx \rangle$.
Thus the symplectic form on $T^*X$ is defined to be $\omega=d\alpha_{T^*X}$.
We denote by $a \colon T^*X \to T^*X,(x;\xi) \mapsto (x;-\xi)$ the
antipodal map.
For a subset $A$ of $T^*X$, $A^a$ denotes its image under the antipodal map $a$.
A subset $A$ of $T^*M$ is said to be \emph{conic} if it is invariant under the action of $\bR_{>0}$ on $T^*M$, that is, the scaling of the fibers.

\subsection{Microsupports of sheaves}

We write $\bfk_X$ for the constant sheaf with stalk $\bfk$ and $\Module(\bfk_X)$ for the abelian category of sheaves of $\bfk$-vector spaces on $X$.
Moreover, we denote by $\SD(\bfk_X)$ the unbounded derived category of $\bfk$-vector spaces on $X$. 
Although all the results are stated for bounded derived categories in \cite{KS90}, we can apply most of them for unbounded categories, which we shall state in this subsection. 
We refer to \cite{MR932640,KS06,robalo2018lemma}.
One can define Grothendieck's six operations $\cRHom,\allowbreak
\otimes, \allowbreak \rR f_*,\allowbreak f^{-1},\allowbreak
\rR f_!,\allowbreak f^!$ for a continuous map $f \colon X \to Y$ with suitable conditions.
For a locally closed subset $Z$ of $X$, we denote by $\bfk_Z$ the
zero extension of the constant sheaf with stalk $\bfk$ on $Z$ to $X$,
whose stalk is $0$ on $X \setminus Z$.
Moreover, for a locally closed subset $Z$ of $X$ and $F \in \SD(\bfk_X)$,
we define $F_Z, \RG_Z(F) \in \SD(\bfk_X)$ by
\begin{equation}
	F_Z\coloneqq F \otimes \bfk_Z, \quad \RG_Z(F)\coloneqq \cRHom(\bfk_Z,F).
\end{equation}

Let us recall the definition of the \emph{microsupport} $\MS(F)$ of an
object $F \in \SD(\bfk_X)$. 

\begin{definition}[{\cite[Def.~5.1.2]{KS90}}]\label{definition:microsupport}
	Let $F \in \SD(\bfk_X)$ and $p \in T^*X$.
	One says that $p \not\in \MS(F)$ if there is a neighborhood $U$ of
	$p$ in $T^*X$ such that for any $x_0 \in X$ and any
	$C^\infty$-function $\varphi$ on $X$ (defined on a neighborhood of
	$x_0$) satisfying $d\varphi(x_0) \in U$, one has
	$\RG_{\{ x \in X \mid \varphi(x) \ge \varphi(x_0)\}}(F)_{x_0} \simeq 0$.
	One also sets $\rMS(F) \coloneqq \MS(F) \cap \rT X$.
\end{definition}

For a closed subset $A$ of $T^*X$, we denote by $\SD_{A}(\bfk_X)$ the full triangulated subcategory of $\SD(\bfk_X)$ consisting of objects whose microsupports are contained in $A$.

The following is called the microlocal Morse lemma.

\begin{proposition}[{\cite[Prop.~5.4.17]{KS90} and \cite[Thm.~4.1]{robalo2018lemma}}]\label{proposition:microlocalmorse}
	Let $F \in \SD(\bfk_X)$ and $\varphi \colon X \to \bR$ be a
	$C^\infty$-function.
	Let moreover $a,b \in \bR$ with $a<b$.
	Assume
	\begin{enumerate}
		\renewcommand{\labelenumi}{$\mathrm{(\arabic{enumi})}$}
		\item $\varphi$ is proper on $\Supp(F)$,
		\item  $d\varphi(x) \not\in \MS(F)$ for any $x \in
		\varphi^{-1}([a,b))$.
	\end{enumerate}
	Then the canonical morphism
	\begin{equation}
		\RG(\varphi^{-1}((-\infty,b));F)
		\to
		\RG(\varphi^{-1}((-\infty,a));F)
	\end{equation}
	is an isomorphism.
\end{proposition}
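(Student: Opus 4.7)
The plan is to reformulate the statement as a local-cohomology vanishing and then deduce it by a non-characteristic deformation argument on the interval $[a,b]$, in which the properness hypothesis supplies compactness and the microsupport hypothesis supplies the ``non-characteristic'' condition at every level.

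First, set $U_c := \varphi^{-1}((-\infty, c))$. The short exact sequence $0 \to \bfk_{U_a} \to \bfk_{U_b} \to \bfk_{\varphi^{-1}([a,b))} \to 0$, after applying $\RG(X; \cRHom(-, F))$, fits into a distinguished triangle whose middle map is precisely the canonical restriction $\RG(U_b; F) \to \RG(U_a; F)$. Hence it suffices to show $\RG(X; \RG_{\varphi^{-1}([a,b))}(F)) \simeq 0$, or equivalently, that $\RG(U_b; F) \to \RG(U_c; F)$ is an isomorphism for every $c \in [a,b]$.

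Next, for each $c \in [a,b)$ and each $x \in \varphi^{-1}(c) \cap \Supp(F)$, the assumption $d\varphi(x) \notin \MS(F)$ combined with \cref{definition:microsupport} (applied with the test function $\varphi$ itself) furnishes an open neighborhood $W_x$ of $x$ on which $\RG_{\{\varphi \geq \varphi(x_0)\}}(F)_{x_0} \simeq 0$ for all $x_0 \in W_x$. Since $\varphi|_{\Supp(F)}$ is proper, $\varphi^{-1}(c) \cap \Supp(F)$ is compact, so a finite subcover produces a uniform slab $\varphi^{-1}((c-\varepsilon, c+\varepsilon))$ on which the pointwise vanishing is valid throughout $\Supp(F)$. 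With this local data in hand, I would finish by a connectedness argument: let $I := \{c \in [a,b] \mid \RG(U_b; F) \to \RG(U_c; F) \text{ is an iso}\}$, which contains $b$. The local vanishing enables a Mayer--Vietoris / sheaf-cutting argument showing that $c \in I$ with $c > a$ implies $c - \delta \in I$ for $0 < \delta < \varepsilon$. For a decreasing sequence $c_n \downarrow c_\infty$ in $I$, properness on $\Supp(F)$ and the compatibility of $\RG(-, F)$ with the relevant limits of supports yield $c_\infty \in I$. Together these force $\inf I = a$ and $a \in I$.

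The principal obstacle is the downward propagation step: upgrading the stalk-wise microlocal vanishing of the second paragraph into a genuine isomorphism of derived global sections of $F$. This is the essential content of the Kashiwara--Schapira non-characteristic deformation lemma, and in the unbounded setting relevant here one must additionally verify that the patching argument survives without cohomological boundedness hypotheses on $F$, which is precisely what \cite{robalo2018lemma} provides.
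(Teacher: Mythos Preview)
The paper does not give its own proof of this proposition: it is stated with a bare citation to \cite[Prop.~5.4.17]{KS90} and \cite[Thm.~4.1]{robalo2018lemma} and no argument follows. Your sketch is essentially the standard proof from those references---reduce to vanishing of $\RG_{\varphi^{-1}([a,b))}(F)$, use the definition of microsupport together with properness to obtain uniform stalkwise vanishing of $\RG_{\{\varphi\ge c\}}(F)$ on each level, and feed this into the non-characteristic deformation lemma (Kashiwara--Schapira, Prop.~2.7.2), with \cite{robalo2018lemma} supplying the unbounded extension---so there is nothing to compare against and your outline is sound.
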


We consider the behavior of the microsupports with respect to functorial operations.

\begin{proposition}[{\cite[Prop.~5.4.4, 
		Prop.~5.4.13, and Prop.~5.4.5]{KS90}}]\label{proposition:SSpushpull}
	Let $f \colon X \to Y$ be a morphism of manifolds, $F \in \SD(\bfk_X)$,
	and $G \in \SD(\bfk_Y)$.
	\begin{enumerate}
		\item Assume that $f$ is proper on $\Supp(F)$.
		Then $\MS(\rR f_*F) \subset f_\pi f_d^{-1}(\MS(F))$.
		Moreover, if $f$ is a closed embedding, the inclusion is an equality.
		\item Assume that $f$ is non-characteristic for $\MS(G)$ (see \cite[Def.~5.4.12]{KS90} for the definition).
		Then $\MS(f^{-1}G) \cup \MS(f^!G) \subset
		f_d f_\pi^{-1}(\MS(G))$.
	\end{enumerate}
\end{proposition}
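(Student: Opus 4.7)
The plan is to deduce both inclusions directly from the Morse-theoretic \cref{definition:microsupport} together with the microlocal Morse lemma (\cref{proposition:microlocalmorse}) and standard base change formulas for the six operations. I would follow the original strategy of \cite[Prop.~5.4.4, 5.4.5, 5.4.13]{KS90}; since that argument only uses stalkwise Morse vanishing and base change, it should extend without modification to the unbounded derived category $\SD(\bfk_X)$.

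For part (1), fix $p_0 = (y_0;\eta_0) \notin f_\pi f_d^{-1}(\MS(F))$. Given a $C^\infty$-function $\psi$ defined near $y_0$ with $d\psi(y_0)$ close to $\eta_0$, proper base change combined with properness of $f$ on $\Supp(F)$ yields
\[
\RG_{\{\psi \ge \psi(y_0)\}}(\rR f_*F)_{y_0}
\simeq
\RG\bigl(f^{-1}(y_0);\, \RG_{\{\psi \circ f \ge \psi(y_0)\}}(F)\bigr).
\]
The chain rule identifies $(x; d(\psi \circ f)(x)) = f_d(x, d\psi(y_0))$ as points of $T^*X$ for every $x \in f^{-1}(y_0)$, so the hypothesis on $p_0$ translates to $d(\psi\circ f)(x) \notin \MS(F)$ for all $x \in f^{-1}(y_0)\cap\Supp(F)$. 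Applying \cref{proposition:microlocalmorse} to $F$ with the function $\psi\circ f$ on a neighborhood of each such $x$ kills the corresponding stalk, and vanishing of the integrand on the compact fiber then propagates to the global cohomology, giving $p_0 \notin \MS(\rR f_*F)$.

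For part (2), fix $p_0 = (x_0;\xi_0) \notin f_d f_\pi^{-1}(\MS(G))$. The non-characteristic hypothesis is exactly the condition that $f_\pi$ restricted to $f_d^{-1}(\MS(G))$ be proper, which ensures that $f_d f_\pi^{-1}(\MS(G))$ is a closed conic subset of $T^*X$; hence we may choose a test function $\varphi$ on $X$ with $d\varphi(x_0)$ close to $\xi_0$ whose covector avoids it. A local implicit-function argument, again using non-characteristicity, lets one factor $\varphi = \psi\circ f + \rho$ where $d\rho$ is transverse to the image of $f_d$. A K\"unneth-type computation then reduces the vanishing of $\RG_{\{\varphi\ge\varphi(x_0)\}}(f^{-1}G)_{x_0}$ to the vanishing of $\RG_{\{\psi\ge\psi(y_0)\}}(G)_{y_0}$, which holds because $d\psi(y_0) \notin \MS(G)$. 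The estimate for $f^!G$ follows either by a parallel direct calculation or by the local isomorphism $f^! G \simeq f^{-1}G \otimes \omega_{X/Y}$ available under the non-characteristic hypothesis.

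The main obstacle is the parametrized application of Morse vanishing in (1): the ``escape neighborhood'' $U$ appearing in \cref{definition:microsupport} must be chosen uniformly in $x \in f^{-1}(y_0)\cap\Supp(F)$ in order to convert pointwise stalk vanishing into vanishing of the cohomology of the fiber. This uniformity follows from the compactness of the fiber (forced by the properness hypothesis) together with the closedness and conic invariance of $\MS(F)$. For (2), the analogous technical point is the local factorization $\varphi = \psi\circ f + \rho$, which is a routine consequence of the non-characteristic condition via the implicit function theorem. Since neither step invokes boundedness of complexes, the proofs from \cite{KS90} transfer cleanly to $\SD(\bfk_X)$.
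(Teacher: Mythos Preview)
The paper does not give its own proof of this proposition; it is stated purely as a citation of \cite[Prop.~5.4.4, Prop.~5.4.13, Prop.~5.4.5]{KS90}, with no argument in the text. So there is no ``paper's proof'' to compare against---your sketch is really a sketch of the original Kashiwara--Schapira arguments, not of anything in this article.

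That said, two points in your outline deserve tightening. In part~(i) you appeal to \cref{proposition:microlocalmorse} to obtain stalk vanishing, but what you actually need is exactly the content of \cref{definition:microsupport}; the microlocal Morse lemma is a global statement about sections over sublevel sets and, in the logical development of \cite{KS90}, is \emph{deduced from} the pushforward estimate (via the projection to a point), so invoking it here would be circular. In part~(ii) the ``local factorization $\varphi = \psi\circ f + \rho$'' is more delicate than you indicate: the standard route in \cite{KS90} is to factor $f$ through its graph as a closed embedding followed by a projection and treat the two cases separately, rather than to split a single test function. Your remark that nothing in these arguments requires boundedness, so they pass to $\SD(\bfk_X)$, is correct and is precisely why the paper is content to cite the bounded-category statements without further comment.
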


For closed conic subsets $A$ and $B$ of $T^*X$, let us denote by $A+B$
the fiberwise sum of $A$ and $B$, that is,
\begin{equation}
	A+B
	\coloneqq 
	\left\{
	(x;a+b) \; \middle| \; 
	\begin{aligned}
		& x \in \pi(A) \cap \pi(B), \\
		& a \in A \cap \pi^{-1}(x), b \in B \cap \pi^{-1}(x) 
	\end{aligned}
	\right\} 
	\subset T^*X.
\end{equation}

\begin{proposition}[{\cite[Prop.~5.4.14]{KS90}}]\label{proposition:SStenshom}
	Let $F, G \in \SD(\bfk_X)$.
	\begin{enumerate}
		\item If $\MS(F) \cap \MS(G)^a \subset 0_X$,
		then $\MS(F \otimes G) \subset \MS(F)+\MS(G)$.
		\item If $\MS(F) \cap \MS(G) \subset 0_X$,
		then $\MS(\cRHom(F,G)) \subset \MS(F)^a+\MS(G)$.
	\end{enumerate}
\end{proposition}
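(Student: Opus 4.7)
The plan is to reduce both parts to the non-characteristic inverse image estimate of \cref{proposition:SSpushpull} by realizing $F \otimes G$ and $\cRHom(F,G)$ as pull-backs along the diagonal $\delta_X \colon X \hookrightarrow X \times X$ of external constructions on $X \times X$.

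For part (i), I would first introduce the external tensor product $p_1^{-1}F \otimes p_2^{-1}G$ on $X \times X$, where $p_i \colon X \times X \to X$ denotes the $i$-th projection, and establish the auxiliary bound $\MS(p_1^{-1}F \otimes p_2^{-1}G) \subset \MS(F) \times \MS(G)$. This reduces to applying \cref{proposition:SSpushpull}(ii) to each submersion $p_i$ (which is non-characteristic for any closed conic subset), giving $\MS(p_1^{-1}F) \subset \MS(F) \times 0_X$ and $\MS(p_2^{-1}G) \subset 0_X \times \MS(G)$. Next I would use the canonical isomorphism $F \otimes G \simeq \delta_X^{-1}(p_1^{-1}F \otimes p_2^{-1}G)$ and analyze the non-characteristic pull-back along $\delta_X$. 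Under the natural identification $X \times_{X \times X} T^*(X \times X) \cong T^*X \times_X T^*X$, the maps in \eqref{diag:fpifd} become $(\delta_X)_\pi(x; \xi_1, \xi_2) = ((x, x); \xi_1, \xi_2)$ and $(\delta_X)_d(x; \xi_1, \xi_2) = (x; \xi_1 + \xi_2)$. The non-characteristic condition for $\delta_X$ with respect to $\MS(F) \times \MS(G)$ then unwinds to: $(x; \xi_1) \in \MS(F)$, $(x; \xi_2) \in \MS(G)$, and $\xi_1 + \xi_2 = 0$ force $\xi_1 = \xi_2 = 0$, i.e.\ $\MS(F) \cap \MS(G)^a \subset 0_X$, which is exactly the hypothesis. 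A final application of \cref{proposition:SSpushpull}(ii) then yields $\MS(F \otimes G) \subset (\delta_X)_d (\delta_X)_\pi^{-1}(\MS(F) \times \MS(G)) = \MS(F) + \MS(G)$.

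Part (ii) proceeds along the same lines, using the canonical isomorphism $\cRHom(F, G) \simeq \delta_X^! \cRHom(p_2^{-1}F, p_1^! G)$ in place of the one for $\otimes$, together with an analogous external bound $\MS(\cRHom(p_2^{-1}F, p_1^! G)) \subset \MS(F)^a \times \MS(G)$. The antipode on the $F$-factor reflects the contravariance of internal hom in its first argument. The identification of $(\delta_X)_d$ is unchanged, so the non-characteristic condition becomes $\MS(F)^a \cap \MS(G)^a \subset 0_X$, equivalently the hypothesis $\MS(F) \cap \MS(G) \subset 0_X$. Since \cref{proposition:SSpushpull}(ii) gives the same microsupport bound for $\delta_X^!$ as for $\delta_X^{-1}$ in the non-characteristic case, the conclusion $\MS(\cRHom(F, G)) \subset \MS(F)^a + \MS(G)$ follows.

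The main obstacle is proving the external microsupport bounds, namely $\MS(p_1^{-1}F \otimes p_2^{-1}G) \subset \MS(F) \times \MS(G)$ and $\MS(\cRHom(p_2^{-1}F, p_1^! G)) \subset \MS(F)^a \times \MS(G)$, without circularly invoking the very statement under proof. The key observation making this feasible is that the two external microsupports always lie in the complementary sub-bundles $T^*X \times 0_X$ and $0_X \times T^*X$ of $T^*(X \times X)$, so their intersection---or the intersection with the antipode---is automatically contained in $0_{X \times X}$. In this transverse situation the external bounds can be proved directly from \cref{definition:microsupport} by testing against functions of separated form $\varphi_1(x_1) + \varphi_2(x_2)$ on $X \times X$ and analyzing the resulting sections-with-support via a Künneth-type argument that decouples into sections-with-support on $F$ and $G$ separately, which are controlled by $\MS(F)$ and $\MS(G)$ by assumption.
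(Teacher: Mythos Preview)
The paper does not give a proof of this proposition; it is merely recalled from \cite[Prop.~5.4.14]{KS90} without argument. Your outline is essentially the standard route taken in the cited reference: realize $F \otimes G$ and $\cRHom(F,G)$ as diagonal pull-backs of external constructions on $X \times X$, establish the box-product bound $\MS(F \boxtimes G) \subset \MS(F) \times \MS(G)$ (and its $\cRHom$ analogue), verify the non-characteristic condition for $\delta_X$, and apply \cref{proposition:SSpushpull}(ii).

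One point deserves care. In the last paragraph you propose proving the external bound by testing the microsupport only against separated functions $\varphi_1(x_1)+\varphi_2(x_2)$. The definition of $\MS$ requires testing against \emph{all} smooth $\varphi$, so this step is not complete as written. In \cite{KS90} the box-product bound is handled as a separate preliminary (Prop.~5.4.1 there): if $(x_1,x_2;\xi_1,\xi_2) \notin \MS(F)\times\MS(G)$ then without loss of generality $(x_1;\xi_1)\notin\MS(F)$, and one uses the propagation/cut-off characterization of microsupport in the first factor to conclude. Your K\"unneth-by-separated-phases idea can be made to work, but you would need to argue (e.g., by passing to local coordinates and reducing to linear phase functions, which on a product are automatically separated) that separated test functions suffice to detect the microsupport.
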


We need an estimate for the microsupport of a kind of limit object in $\SD(\bfk_X)$.
For that purpose, we use the following estimates.

\begin{lemma}[{cf.\ \cite[Exe.~V.7]{KS90}}]\label{lem:ms-prod-sum}
	Let $I$ be an index set and $F_i \in \SD(\bfk_X)$ for $i \in I$. 
	Then, one has 
	\begin{align}
		\MS\left(\prod_{i \in I} F_i\right), \ \MS \left(\bigoplus_{i \in I} F_i \right) \subset \overline{\bigcup_{i \in I} \MS(F_i)}. 
	\end{align}
\end{lemma}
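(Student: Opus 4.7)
The plan is to apply \cref{definition:microsupport} directly. I fix $p \notin \overline{\bigcup_{i \in I} \MS(F_i)}$ and choose an open neighborhood $U$ of $p$ in $T^*X$ that is disjoint from every $\MS(F_i)$; this common $U$ will serve as the microsupport witness for both $\prod_i F_i$ and $\bigoplus_i F_i$. For any test datum $(x_0,\varphi)$ with $d\varphi(x_0) \in U$, set $Z \coloneqq \{\varphi \ge \varphi(x_0)\}$; by continuity of $d\varphi$ and openness of $U$, there is an open neighborhood $W$ of $x_0$ in $X$ such that $d\varphi(x) \in U$ for every $x \in W$, so $\RG_{\{\varphi \ge \varphi(x)\}}(F_i)_x \simeq 0$ holds uniformly for all $x \in W$ and all $i \in I$.

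The heart of the argument is to upgrade this pointwise vanishing to an isomorphism of sections over a fixed open set, uniformly in $i$. In local coordinates near $x_0$, I arrange $x_0 = 0$, $\varphi(x) = x_1$, and $\varphi(x_0) = 0$, and pick an open ball $V \subset W$ around $x_0$ small enough to lie in $\{|y_1|<\epsilon\}$. The family $V_c \coloneqq V \cap \{y_1 < c\}$ for $c \in (-\epsilon,\epsilon)$ satisfies the hypotheses of the non-characteristic deformation lemma \cite[Prop.~2.7.2]{KS90} applied to each $F_i$, since the outward conormal to $V \cap \{y_1 = c\}$ is $+dy_1 \in U$. This yields, for every $i \in I$, an isomorphism
\[
    \RG(V;F_i) \xrightarrow{\;\sim\;} \RG\bigl(V \cap \{y_1 < 0\};F_i\bigr).
\]

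For the direct sum, $\RG(V;-)$ commutes with filtered colimits (since $V$ has finite cohomological dimension as an open subset of a manifold) and hence with $\bigoplus_i$, producing an iso $\RG(V;\bigoplus_i F_i) \simeq \RG(V \cap \{y_1 < 0\};\bigoplus_i F_i)$. For the product, $\RG(V;-)$ commutes with arbitrary products as a right adjoint, giving the analogous iso with $\prod_i F_i$. In either case, taking the filtered colimit over shrinking balls $V$ and using the distinguished triangle $\RG_Z(G) \to G \to \rR j_* j^{-1} G$ with $j \colon X \setminus Z \hookrightarrow X$ yields $\RG_Z(\bigoplus_i F_i)_{x_0} \simeq 0$ and $\RG_Z(\prod_i F_i)_{x_0} \simeq 0$, which by \cref{definition:microsupport} means $p \notin \MS(\bigoplus_i F_i)$ and $p \notin \MS(\prod_i F_i)$.

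The main obstacle is the product case: one cannot deduce $\RG_Z(\prod_i F_i)_{x_0} \simeq 0$ simply from $\RG_Z(F_i)_{x_0} \simeq 0$ for each $i$, since the stalk functor does not commute with infinite products (easy counterexamples exist on $\bR$ already). The role of the non-characteristic deformation lemma is precisely to convert the pointwise stalk vanishing into an iso of $\RG$ on a fixed open neighborhood of $x_0$, on which products behave well; the uniformity of the common neighborhoods $U$ and $W$ across $I$ is exactly what makes this application simultaneously valid for all $i \in I$.
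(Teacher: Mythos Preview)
The paper gives no proof of this lemma, only the citation to \cite[Exe.~V.7]{KS90}. Your argument for the product is correct and captures exactly the right idea: since stalks do not commute with products, you upgrade the pointwise vanishing to a fixed-neighborhood isomorphism $\RG(V;F_i)\simto\RG(V\cap\{y_1<0\};F_i)$ via the non-characteristic deformation lemma, and then use that $\RG(V;-)$, as a right adjoint, commutes with arbitrary products. One small technical point: when applying \cite[Prop.~2.7.2]{KS90} to the family $V_c=V\cap\{y_1<c\}$, the hypothesis must also be verified on $\partial V\cap\{y_1=c\}$, where the relevant conormal to $X\setminus V_c$ is not $dy_1$; the standard remedy is to replace $y_1$ by a strictly convex perturbation $y_1+\delta|y'|^2$ so that the moving hypersurfaces are compact and interior to the chart.

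The direct-sum case, however, contains a genuine error. The assertion that $\RG(V;-)$ commutes with filtered colimits because $V$ has finite cohomological dimension is false: for $V=\bR$ and $F_n=\bfk_{\{n\}}$ one finds $\Gamma(\bR;\bigoplus_n F_n)\simeq\prod_n\bfk$ but $\bigoplus_n\Gamma(\bR;F_n)\simeq\bigoplus_n\bfk$. Finite cohomological dimension bounds the amplitude of $\RG(V;-)$; it says nothing about commutation with colimits, which would require $V$ to be quasi-compact. Nor does $\RG_{V\cap Z}(V;-)$ commute with $\bigoplus$ even when $V\cap Z$ is compact. A clean repair is Verdier duality: writing $D_X F \coloneqq \cRHom(F,\omega_X)$, one has $D_X(\bigoplus_i F_i)\simeq\prod_i D_X(F_i)$ and $\MS(D_XF)=\MS(F)^a$, so the already-established product case gives
\[
\MS\Bigl(\bigoplus_i F_i\Bigr)=\MS\Bigl(\prod_i D_X(F_i)\Bigr)^a\subset\Bigl(\,\overline{\bigcup_i\MS(D_XF_i)}\,\Bigr)^a=\overline{\bigcup_i\MS(F_i)}.
\]
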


\subsection{Composition and convolution}\label{subsection:composition}

We recall the operation called the composition of sheaves.

For $i=1,2,3$, let $X_i$ be a manifold.
We write $X_{ij}\coloneqq X_i \times X_j$ and $X_{123}\coloneqq X_1 \times X_2 \times X_3$ for short.
We denote by $q_{ij}$ the projection $X_{123} \to X_{ij}$.
Similarly, we denote by $p_{ij}$ the projection $T^*X_{123} \to T^*X_{ij}$.
We also denote by $p_{12^a}$ the composite of $p_{12}$ and the antipodal map on $T^*X_2$.

Let $A \subset T^*X_{12}$ and $B \subset T^*X_{23}$.
We set
\begin{equation}\label{equation:compset}
	A \circ B
	\coloneqq 
	p_{13}(p_{12^a}^{-1}A \cap p_{23}^{-1}B) \subset T^*X_{13}.
\end{equation}
We define a composition operation of sheaves by
\begin{equation}
	\begin{aligned}
		\underset{X_2}{\circ} \colon \SD(\bfk_{X_{12}}) \times \SD(\bfk_{X_{23}}) 
		& \to \SD(\bfk_{X_{13}}), \\
		(K_{12},K_{23}) 
		& \mapsto K_{12} \underset{X_2}{\circ} K_{23}
		\coloneqq 
		\rR{q_{13}}_! (q_{12}^{-1}K_{12}\otimes q_{23}^{-1}K_{23}).
	\end{aligned}
\end{equation}
If there is no risk of confusion, we simply write $\circ$ instead of
$\underset{X_2}{\circ}$.
By \cref{proposition:SSpushpull,proposition:SStenshom}, we have the following.

\begin{proposition}\label{proposition:SScomp}
	Let $K_{ij} \in \SD(\bfk_{X_{ij}})$ and set $\Lambda_{ij}\coloneqq \MS(K_{ij}) \subset T^*X_{ij} \ (ij=12,23)$.
	Assume
	\begin{enumerate}
		\renewcommand{\labelenumi}{$\mathrm{(\arabic{enumi})}$}
		\item $q_{13}$ is proper on $q_{12}^{-1}\Supp(K_{12}) \cap q_{23}^{-1}\Supp(K_{23})$,
		\item $p_{12^a}^{-1}\Lambda_{12} \cap p_{23}^{-1}\Lambda_{23} \cap (T^*_{X_1}X_1 \times T^*X_2 \times T^*_{X_3}X_3) \subset 0_{X_{123}}$.
	\end{enumerate}
	Then
	\begin{equation}
		\MS(K_{12} \underset{X_2}{\circ} K_{23}) \subset
		\Lambda_{12} \circ \Lambda_{23}.
	\end{equation}
\end{proposition}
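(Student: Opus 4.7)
The plan is to decompose the composition $K_{12} \underset{X_2}{\circ} K_{23} = \rR q_{13!}(q_{12}^{-1}K_{12} \otimes q_{23}^{-1}K_{23})$ into three elementary operations (two inverse images, a tensor product, and a proper direct image), and to chase the microsupport estimates of \cref{proposition:SSpushpull} and \cref{proposition:SStenshom} through each step, checking the required non-characteristic / disjointness hypotheses along the way.

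First I would handle the pullbacks. Since $q_{12}\colon X_{123} \to X_{12}$ and $q_{23}\colon X_{123}\to X_{23}$ are smooth submersions, they are non-characteristic for any closed conic subset, so by \cref{proposition:SSpushpull}(ii),
\begin{equation}
\MS(q_{12}^{-1}K_{12}) \subset \Lambda_{12} \times T^*_{X_3}X_3, \qquad \MS(q_{23}^{-1}K_{23}) \subset T^*_{X_1}X_1 \times \Lambda_{23},
\end{equation}
where I identify $T^*X_{123}\simeq T^*X_1\times T^*X_2\times T^*X_3$. Next, to apply \cref{proposition:SStenshom}(i) to the tensor product, I need to check that these two sets have antipodally transverse intersection contained in the zero-section. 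A point in the intersection of the first set with the antipode of the second has the form $(0,\xi_2,0)$ with $(0,\xi_2)\in\Lambda_{12}$ and $(-\xi_2,0)\in\Lambda_{23}$; after substituting $\xi_2 \mapsto -\xi_2$, this is precisely hypothesis~(2) of the proposition (via the descriptions of $p_{12^a}^{-1}\Lambda_{12}$ and $p_{23}^{-1}\Lambda_{23}$), which forces $\xi_2=0$. Hence \cref{proposition:SStenshom}(i) yields
\begin{equation}
\MS(q_{12}^{-1}K_{12} \otimes q_{23}^{-1}K_{23}) \subset (\Lambda_{12}\times 0_{X_3}) + (0_{X_1}\times \Lambda_{23}),
\end{equation}
which is the set of points of the form $(x_1,x_2,x_3;\xi_1,\xi_2+\xi_2',\xi_3)$ with $(x_1,x_2;\xi_1,\xi_2)\in\Lambda_{12}$ and $(x_2,x_3;\xi_2',\xi_3)\in\Lambda_{23}$.

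Finally, hypothesis~(1) allows me to apply \cref{proposition:SSpushpull}(i) (valid for $\rR q_{13!}$ under properness on the support): this gives $\MS(K_{12}\circ K_{23}) \subset (q_{13})_\pi (q_{13})_d^{-1}$ of the above set. Computing $(q_{13})_d$ as $(x_1,x_2,x_3;\xi_1,\xi_3)\mapsto(x_1,x_2,x_3;\xi_1,0,\xi_3)$, the preimage condition $\xi_2+\xi_2'=0$ enforces $\xi_2'=-\xi_2$; then projecting via $(q_{13})_\pi$ to $(x_1,x_3;\xi_1,\xi_3)$ recovers exactly $\Lambda_{12}\circ\Lambda_{23}$ as defined in \eqref{equation:compset}, after the substitution $\xi_2\to -\xi_2$ that accounts for the antipodal in $p_{12^a}$.

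The only genuine subtlety is the antipodal bookkeeping in step two: one must match the sign convention in \eqref{equation:compset} (where the antipode appears in $p_{12^a}$) against the sign convention in \cref{proposition:SStenshom}(i) (where the antipode appears on $\MS(G)$). Once this is tracked consistently, the three estimates chain together cleanly and no further geometric input is needed.
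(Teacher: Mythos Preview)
Your argument is correct and follows precisely the approach the paper indicates: the paper states just before the proposition that it follows from \cref{proposition:SSpushpull}(i),(ii) and \cref{proposition:SStenshom}, without spelling out the details. Your three-step decomposition (pullback along the submersions $q_{12},q_{23}$, tensor product with the antipodal disjointness check furnished by hypothesis~(2), and proper pushforward along $q_{13}$ using hypothesis~(1)) is exactly the intended chain of estimates, and your handling of the sign bookkeeping matching the $p_{12^a}$ convention in \eqref{equation:compset} with the antipodal in \cref{proposition:SStenshom}(i) is accurate.
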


In this work, we often use sheaves on $X \times \bR$.
We introduce the operation of convolution for objects of $\SD(\bfk_{X \times \bR})$.
Define the maps
\begin{gather}\label{eq:symbol_comv}
	\tilde{q}_1,\tilde{q}_2, m \colon X \times \bR \times \bR \to X \times \bR, \\
	\notag \tilde{q}_1(x,t_1,t_2)=(x,t_1), \ \tilde{q}_2(x,t_1,t_2)=(x,t_2), \ m(x,t_1,t_2)=(x,t_1+t_2).
\end{gather}
We define a convolution operation by 
\begin{equation}
	\begin{aligned}
		\star \colon \SD(\bfk_{X \times \bR}) \times \SD(\bfk_{X \times \bR}) 
		& \to \SD(\bfk_{X \times \bR}), \\
		(F,G) 
		& \mapsto F \star G
		\coloneqq 
		\rR m_! (\tilde{q}_1^{-1}F \otimes \tilde{q}_2^{-1}G).
	\end{aligned}
\end{equation}
We also introduce a right adjoint to the convolution functor.
Set $i \colon X \times \bR \to X \times \bR, (x,t) \mapsto (x,-t)$.

\begin{definition}
	For $F,G \in \SD(\bfk_{X \times \bR})$, one sets
	\begin{align}
		\cHom^\star(F,G) & \coloneqq \rR \tilde{q}_{1*} \cRHom(\tilde{q}_2^{-1}F,m^!G) \\
		& \ \simeq \rR m_*\cRHom(\tilde{q}_2^{-1}i^{-1}F,\tilde{q}_1^!G).
	\end{align}
\end{definition}

\begin{lemma}\label{lem:conv-hom-MS}
	Let $F,G \in \SD(\bfk_{X \times \bR})$ and assume that there exist two closed cones $A,B \subset \bR$ such that $\MS(F) \subset T^*X \times \bR \times A$ and $\MS(G) \subset T^*X \times \bR \times B$.
	Then, one has 
	\begin{equation}
		\begin{aligned}
			\MS(F \star G) \subset T^*X \times \bR \times (A \cap B), \\
			\MS(\cHom^\star(F,G)) \subset T^*X \times \bR \times (A \cap B).
		\end{aligned}
	\end{equation}
\end{lemma}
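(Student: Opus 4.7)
The plan is to trace the microsupport through each operation in the definitions
$F\star G = \rR m_!(\tilde q_1^{-1}F\otimes \tilde q_2^{-1}G)$ and
$\cHom^\star(F,G) = \rR\tilde q_{1*}\cRHom(\tilde q_2^{-1}F, m^!G)$
using \cref{proposition:SSpushpull} and \cref{proposition:SStenshom}, exploiting the fact that the vertical maps in diagram~\eqref{diag:fpifd} associated to the submersions $\tilde q_1, \tilde q_2, m$ transport the $\tau$-coordinates transparently. Writing points of $T^*(X\times\bR_{t_1}\times\bR_{t_2})$ as $(x,t_1,t_2;\xi,\tau_1,\tau_2)$, the non-characteristic estimate \cref{proposition:SSpushpull}(ii) gives $\MS(\tilde q_1^{-1}F)\subset\{\tau_1\in A,\,\tau_2=0\}$ and $\MS(\tilde q_2^{-1}G)\subset\{\tau_1=0,\,\tau_2\in B\}$. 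The fiberwise-sum bound in \cref{proposition:SStenshom}(i) then places $\MS(\tilde q_1^{-1}F\otimes\tilde q_2^{-1}G)$ inside $\{\tau_1\in A,\,\tau_2\in B\}$. Since $m_d(x,t;\xi,\tau)=(x,t_1,t_2;\xi,\tau,\tau)$, applying \cref{proposition:SSpushpull}(i) to $\rR m_!$ forces $\tau\in A\cap B$, giving the first inclusion.

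For the second inclusion, I run the dual calculation. Since $m$ is a submersion, $m^!G$ has microsupport contained in $\{(x,t_1,t_2;\xi,\tau,\tau):(x,t_1+t_2;\xi,\tau)\in\MS(G)\}$, so its $(\tau_1,\tau_2)$-slot lies in $\{(\tau,\tau):\tau\in B\}$. Combined with $\MS(\tilde q_2^{-1}F)^a\subset\{\tau_1=0,\,\tau_2\in -A\}$ and \cref{proposition:SStenshom}(ii), the microsupport of $\cRHom(\tilde q_2^{-1}F,m^!G)$ sits in $(\tau_1,\tau_2)\in\{(\tau,\tau-a):\tau\in B,\,a\in A\}$. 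Finally, \cref{proposition:SSpushpull}(i) applied to $\rR\tilde q_{1*}$ enforces $\tau_2=0$ on the image, whence $\tau=a\in A$ together with $\tau\in B$, i.e.\ $\tau\in A\cap B$, which is the output $\tau$-coordinate.

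The main obstacle is verifying the transversality hypothesis of \cref{proposition:SStenshom} and the properness hypothesis of \cref{proposition:SSpushpull}(i), both of which can fail in general along the $\xi$-directions at $\tau=0$ and for the non-proper maps $\tilde q_i, m$. I expect to handle these either by invoking the closed-sum refinement of the tensor-product/internal-hom microsupport bounds, which holds unconditionally and yields the same constraint on the $\tau$-slot since the $\xi$-closure cannot enlarge the $\tau$-factor, or by a local-to-global reduction with a partition of unity in the $t$-variable for the direct image step. The extension to the unbounded derived setting introduces no additional difficulty by \cite{robalo2018lemma}.
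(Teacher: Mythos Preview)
The paper states this lemma without proof, so there is nothing to compare against; I evaluate your argument on its own. Your coordinate trace through each operation is correct, and your first fix is valid: the unconditional bounds on $\MS(F\otimes G)$ and $\MS(\cRHom(F,G))$ via the closed fiberwise sum hold without any transversality hypothesis, and since $A,B\subset\bR$ are already closed cones, taking limits in the $(\xi,\tau)$-variables cannot enlarge the projection to the $\tau$-slot.

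The genuine gap is the direct-image step. Neither $m$ nor $\tilde q_1$ is proper on the relevant supports, so \cref{proposition:SSpushpull}(i) does not apply, and ``partition of unity in the $t$-variable'' is not a proof as written: a naive cutoff introduces new microsupport at the cutoff locus with a priori uncontrolled $\tau$-component. For $F\star G$ there is a clean repair along the lines you hint at---cut off in $t_2$ only (so that $\tau_1\in A$ persists while only $\tau_2$ is disturbed), use that the left adjoint $\rR m_!$ commutes with the resulting $\hocolim$ over the exhaustion and that $m$ is proper on each truncated support, then invoke \cref{lemma:ms_hocolim} to conclude $\tau\in A$; a symmetric $t_1$-cutoff gives $\tau\in B$. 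For $\cHom^\star(F,G)$, however, the outermost functor $\rR\tilde q_{1*}$ is a \emph{right} adjoint and does not commute with $\hocolim$, so this argument breaks down. One must instead either run a dual homotopy-limit argument, or compactify the fiber direction (factor through $j\colon X\times\bR\times\bR\hookrightarrow X\times\bR\times S^1$ and the now-proper projection) and check that $\rR j_*$ adds only microsupport with vanishing $\tau_1$-component at the added point. Your proposal does not supply any of this, so the $\cHom^\star$ case is genuinely incomplete.
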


It is useful to define a more general operation, which combines composition and convolution. 
Set 
\begin{align}
	& \tilde{q}_{12} \colon X_{123} \times \bR^2 \to X_{12} \times \bR, 
	(x_1,x_2,x_3,t_1,t_2) \mapsto (x_1,x_2,t_1), \\
	& \tilde{q}_{23} \colon X_{123} \times \bR^2 \to X_{23} \times \bR, 
	(x_1,x_2,x_3,t_1,t_2) \mapsto (x_2,x_3,t_2), \\
	& m_{13} \colon X_{123} \times \bR^2 \to X_{13} \times \bR, 
	(x_1,x_2,x_3,t_1,t_2) \mapsto (x_1,x_3,t_1+t_2)
\end{align}
and define
\begin{equation}
	\begin{aligned}
		\bullet_{X_2} \colon \SD(\bfk_{X_{12} \times \bR}) \times \SD(\bfk_{X_{23} \times \bR}) 
		& \to \SD(\bfk_{X_{13} \times \bR}), \\
		(K_{12},K_{23}) 
		& \mapsto K_{12} \underset{X_2}{\bullet} K_{23}
		\coloneqq 
		\rR{m_{13}}_!\,(\tilde{q}_{12}^{-1}K_{12}\otimes \tilde{q}_{23}^{-1}K_{23}).
	\end{aligned}
\end{equation}
If there is no risk of confusion, we simply write $\bullet$ instead of
$\underset{X_2}{\bullet}$.

\subsection{Homotopy colimits}

Here, we recall the definition of homotopy colimits (cf.\ \cite{bokstedt1993homotopy} and \cite{KS06}) and estimate their microsupports.

Let $(F_n)_{n \in \bN}$ be a sequence of objects of $\SD(\bfk_X)$ together with morphisms $f_n \colon F_n \to F_{n+1} \ (n \in \bN=\bZ_{\ge 0})$, that is, $(F_n, f_n)_{n \in \bN}$ is an inductive system in $\SD(\bfk_X)$. 
Define a morphism $s \colon \bigoplus_{n \in \bN}F_n \to \bigoplus_{n \in \bN}F_n$ as the composite 
\begin{equation}
	\bigoplus_{n \in \bN} F_n \xrightarrow{\bigoplus_n f_n} \bigoplus_{n \in \bN} F_{n+1} \simeq \bigoplus_{n \in \bZ_{\ge 1}} F_n \to \bigoplus_{n \in \bN} F_n.
\end{equation}
Then one can define the homotopy colimit of the inductive system $(F_n, f_n)_{n \in \bN}$ as the cone of the morphism 
\begin{equation}
	\id-s \colon \bigoplus_{n \in \bN} F_n \to \bigoplus_{n \in \bN} F_n,
\end{equation}
which we write $\hocolim(F_n) \in \SD(\bfk_X)$.
We have a canonical morphism $\rho_n \colon F_n \to \hocolim(F_n)$. 
Given a sequence of morphisms $g_n \colon F_n \to G \ (n \in \bN)$ such that $g_{n+1} \circ f_n = g_n$ for any $n$, we get a morphism $g \colon \hocolim(F_n) \to G$ satisfying $g_n = g \circ \rho_n$.

\begin{lemma}\label{lemma:ms_hocolim}
	Let $(F_n, f_n)_{n \in \bN}$ be an inductive system in $\SD(\bfk_X)$. 
	Then 
	\begin{equation}
		\MS(\hocolim(F_n)) \subset \bigcap_{N \in \bN} \overline{\bigcup_{n \ge N} \MS(F_n)}.
	\end{equation}
\end{lemma}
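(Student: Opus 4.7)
The plan is to reduce the general $N$-indexed bound to the basic $N=0$ bound together with a truncation-invariance property of the homotopy colimit.

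First, I would observe that the $N=0$ bound is essentially immediate from what is already in the paper. By definition, $\hocolim(F_n)$ sits in a distinguished triangle
\begin{equation*}
    \bigoplus_{n \in \bN} F_n \xrightarrow{\id - s} \bigoplus_{n \in \bN} F_n \to \hocolim(F_n) \xrightarrow{+1}.
\end{equation*}
By \cref{lem:ms-prod-sum}, $\MS\bigl(\bigoplus_n F_n\bigr) \subset \overline{\bigcup_n \MS(F_n)}$, and by the standard behavior of microsupports on a distinguished triangle (\cite[Prop.~5.1.3]{KS90}), we conclude
\begin{equation*}
    \MS(\hocolim(F_n)) \subset \overline{\bigcup_{n \in \bN} \MS(F_n)}.
\end{equation*}

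Next, I would establish that the homotopy colimit is invariant under truncating the first $N$ terms, i.e.\ that the natural morphism $\hocolim_{n \ge N}(F_n) \to \hocolim_{n \ge 0}(F_n)$ is an isomorphism for every $N$. The inclusion $\bigoplus_{n \ge N} F_n \hookrightarrow \bigoplus_{n \ge 0} F_n$ commutes with $\id - s$, since for $n \ge N$ the operator $s$ sends $F_n$ to $F_{n+1}$, which still lies in the subobject. Hence we obtain a morphism of defining triangles, and the octahedral axiom yields a third distinguished triangle whose mapping cone of the two leftmost columns is $\bigoplus_{0 \le n < N} F_n$ equipped with a self-map $\id - \tilde s$, where $\tilde s$ acts as $f_n$ on $F_n$ for $n < N-1$ and as $0$ on $F_{N-1}$. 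With respect to the decomposition $F_0 \oplus \dots \oplus F_{N-1}$, the endomorphism $\id - \tilde s$ is lower triangular with identities on the diagonal, hence an isomorphism (with explicit inverse $\sum_{k=0}^{N-1} \tilde s^k$, since $\tilde s$ is nilpotent on the finite sum). Its cone therefore vanishes, which forces $\hocolim_{n \ge N}(F_n) \to \hocolim_{n \ge 0}(F_n)$ to be an isomorphism.

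Combining these two steps, I apply the first bound to the truncated inductive system $(F_n)_{n \ge N}$ to obtain
\begin{equation*}
    \MS(\hocolim(F_n)) = \MS(\hocolim_{n \ge N}(F_n)) \subset \overline{\bigcup_{n \ge N} \MS(F_n)},
\end{equation*}
and then intersect over $N \in \bN$ to reach the stated conclusion.

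The only nontrivial point is the truncation-invariance, and more specifically the verification that the induced endomorphism $\id - \tilde{s}$ on the finite direct sum is an isomorphism; this is the step that makes the bound sharper than the crude global one. Everything else is formal manipulation of the defining triangle together with the microsupport estimate for infinite sums provided by \cref{lem:ms-prod-sum}.
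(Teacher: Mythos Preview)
Your proof is correct and follows essentially the same approach as the paper's own proof: both use the direct-sum estimate \cref{lem:ms-prod-sum} together with the triangle inequality for microsupports to get the $N=0$ bound, and then invoke truncation-invariance of the homotopy colimit to pass to arbitrary $N$. The paper simply asserts $\hocolim_n(F_n) \simeq \hocolim_n(F_{n+N})$ without justification, whereas you spell out a proof via the $3\times 3$ lemma and the nilpotence of $\tilde s$ on the finite sum; this extra detail is correct and welcome, though what you call the ``octahedral axiom'' is more precisely the $3\times 3$ (or nine) lemma in triangulated categories.
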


\begin{proof}
	By \cref{lem:ms-prod-sum}, $\MS(\bigoplus_{n \in \bN} F_n) \subset \overline{\bigcup_n \MS(F_n)}$. 
	Note that we have $\hocolim_n(F_n) \simeq \hocolim_n(G_n)$ with $G_n=F_{n+N}$ for any $N \in \bN$.     
	Hence, the result follows from the triangle inequality for microsupports and the definition of homotopy colimits.
\end{proof}

\section{Interleaving distance for sheaves and sheaf quantization}\label{section:tamarkin_SQ}

In this section, we review the interleaving distance for sheaves following Petit--Schapira~\cite{petit2020thickening}.
We also briefly review the Tamarkin category \cite{Tamarkin} and sheaf quantization of Hamiltonian isotopies \cite{GKS}.
See also Guillermou--Schapira~\cite{GS14} and Zhang~\cite{Zhang20} for details of the Tamarkin category.

\subsection{Interleaving distance for sheaves}\label{subsection:thickening}

We recall some notions from Petit--Schapira~\cite{petit2020thickening}.
A topological space is said to be \emph{good} if it is Hausdorff, locally compact, countable at infinity, and finite flabby dimension.

\begin{definition}
	Let $X$ be a good topological space. 
	A \emph{thickening kernel} on $X$ is a monoidal presheaf $\frakK$ on $(\bR_{\ge 0}, +)$ with values in the monoidal category $\SD(\bfk_{X \times X})$. 
	In other words, it is a family of kernels $\frakK_a \in \SD(\bfk_{X \times X})$ with a morphism $\rho_{b,a} \colon \frakK_b \to \frakK_a$ for $a \le b$ together with isomorphisms
	\begin{equation}
		\frakK_a \circ \frakK_b \simeq \frakK_{a+b}, \quad \frakK_0 \simeq \bfk_{\Delta_X}
	\end{equation}
	satisfying the compatibility conditions (see \cite[Def.~1.2.2]{petit2020thickening} for details).
\end{definition}

For a thickening kernel $\frakK$ on $X$ and $F \in \SD(\bfk_X)$, we write $\rho_{b,a}(F)$ for the morphism $\rho_{b,a} \circ \id_F \colon \frakK_b \circ F \to \frakK_a \circ F \ (a \le b)$.

\begin{definition}
	Let $\frakK$ be a thickening kernel on $X$, $F,G \in \SD(\bfk_X)$, and $a,b \in \bR_{\ge 0}$.
	\begin{enumerate}
		\item The pair $(F,G)$ is said to be \emph{$(a,b)$-isomorphic} if there exist morphisms $\alpha \colon \frakK_a \circ F \to G$ and $\beta \colon \frakK_b \circ G \to F$ such that 
		\begin{enumerate}
			\renewcommand{\labelenumii}{$\mathrm{(\arabic{enumii})}$}
			\item the composite $\frakK_{a+b} \circ F \xrightarrow{\frakK_a \circ \alpha} \frakK_b \circ G \xrightarrow{\beta} F$ is equal to $\rho_{a+b,0}(F)$,
			\item the composite $\frakK_{a+b} \circ G \xrightarrow{\frakK_b \circ \beta} \frakK_a \circ F \xrightarrow{\alpha} G$ is equal to $\rho_{a+b,0}(G)$.
		\end{enumerate}
		In this case, the pair of morphisms $(\alpha,\beta)$ is called an $(a,b)$-isomorphism. 
		
		If $(F,G)$ is $(a,a)$-isomorphic then $F$ and $G$ are called $a$-isomorphic.
		\item One sets 
		\begin{equation}
			d_\frakK(F,G)
			\coloneqq 
			\inf 
			\left\{	a+b \in \bR_{\ge 0} 
			\; \middle| \;
			\begin{aligned}
				& a,b \in \bR_{\ge 0}, \\
				& \text{$(F,G)$ is $(a,b)$-isomorphic}
			\end{aligned}
			\right\},
		\end{equation}
		which defines a pseudo-distance on the category $\SD(\bfk_X)$.
		\item The pair $(F,G)$ is said to be \emph{weakly $(a,b)$-isomorphic} if there exist morphisms $\alpha, \delta \colon \frakK_a \circ F \to G$ and $\beta, \gamma \colon \frakK_b \circ G \to F$ such that 
		\begin{enumerate}
			\renewcommand{\labelenumii}{$\mathrm{(\arabic{enumii})}$}
			\item the composite $\frakK_{a+b} \circ F \xrightarrow{\frakK_a \circ \alpha} \frakK_b \circ G \xrightarrow{\beta} F$ is equal to $\rho_{a+b,0}(F)$,
			\item the composite $\frakK_{a+b} \circ G \xrightarrow{\frakK_b \circ \gamma} \frakK_a \circ F \xrightarrow{\delta} G$ is equal to $\rho_{a+b,0}(G)$, 
			\item $\alpha \circ \rho_{2a,a}(F) = \delta \circ \rho_{2a,a}(F)$ and $\beta \circ \rho_{2b,b}(G)= \gamma \circ \rho_{2b,b}(G)$.
		\end{enumerate}
		\item One says that $F$ is \emph{$a$-torsion} or \emph{$a$-trivial} if $\rho_{a,0}(F) \colon \frakK_a \circ F \to F$ is the zero morphism.
	\end{enumerate}
\end{definition}

\begin{remark}\label{remark:distances}
	\begin{enumerate}
		\item One can see that
		\begin{equation}
			\text{$(a,b)$-isomorphic} \Rightarrow \text{weakly $(a,b)$-isomorphic}
		\end{equation}
		and 
		\begin{equation}
			\begin{aligned}
				\text{weakly $(a,b)$-isomorphic} & \Rightarrow \text{$(2a,2b)$-isomorphic} \\
				& \Rightarrow \text{$2\max(a,b)$-isomorphic}.
			\end{aligned}
		\end{equation}
		\item In \cite{petit2020thickening}, the authors define 
		\begin{equation}
			\dist_{\frakK}(F,G) \coloneqq \inf\{ a \in \bR_{\ge 0} \mid \text{$F$ and $G$ are $a$-isomorphic} \}
		\end{equation}
		and call it the \emph{interleaving distance} associated with $\frakK$.
		By (i), the pseudo-distances $d_\frakK$ and $\dist_\frakK$ are equivalent.
		Indeed, 
		\begin{equation}
			d_\frakK(F,G) \le 2\dist_\frakK(F,G) \le 2d_\frakK(F,G).
		\end{equation}
		\item The interleaving distance above is a generalization of the convolution distance $d_C$ on $\SD(\bfk_\bR)$ introduced by Kashiwara--Schapira~\cite{KS18persistent} and later investigated by \cite{kashiwara2021piecewise,berkouk2021derived,berkouk2021ephemeral,berkouk2019level} and others.
		Indeed, when $X=\bR$ and $\frakK_a = \bfk_{\Delta_a}$, where $\Delta_a \coloneqq \{ (x,y) \in \bR^2 \mid \|x-y\| \le a \} \subset \bR^2$ is the thickened diagonal, we find that $d_C=\dist_\frakK$.
	\end{enumerate}
\end{remark}

In what follows, let $\frakK$ be a thickening kernel on $X$. 
The statement of the following lemma is slightly stronger than \cite[Lem.~4.14]{AI20}, but the proof itself is almost the same.
We need the stronger result in this paper, so we reproduce the proof for the convenience of the reader.

\begin{lemma}[{cf.\ \cite[Lem.~4.14]{AI20}}]\label{lemma:torsion_to_weakisom}
	Let $F \stackrel{u}{\lto} G \stackrel{v}{\lto} H \stackrel{w}{\lto} F[1]$ be an exact triangle in $\SD(\bfk_X)$ and assume that $F$ is $c$-torsion.
	Then $(G,H)$ is weakly $(0,c)$-isomorphic.
\end{lemma}

\begin{proof}
	By assumption, we have $w \circ \rho_{c,0}(H)= \rho_{c,0}(F[1]) \circ (\frakK_c \circ_X w)=0$.
	Hence, we get a morphism $\gamma \colon \frakK_c \circ H \to G$ satisfying $\rho_{c,0}(H)=v \circ \gamma$:
	\begin{equation}
		\begin{aligned}
			\xymatrix{
				\frakK_c \circ F \ar[r]^-{\frakK_c \circ u} \ar[d] & \frakK_c \circ G \ar[r]^-{\frakK_c \circ v} \ar[d] \ar@{}[rd]|(.7){\circlearrowright} & \frakK_c \circ H \ar[r]^-{\frakK_c \circ w} \ar[d] \ar[d] \ar@{-->}[ld]_-{\gamma} & \frakK_c \circ F[1] \ar[d]^-0 \\
				F \ar[r]_-{u} & G \ar[r]_-{v} & H \ar[r]_-{w} & F[1].
			}
		\end{aligned}
	\end{equation}
	On the other hand, since $\rho_{c,0}(G) \circ (\frakK_c \circ_X u)=u \circ \rho_{c,0}(F)=0$, there exists a morphism $\beta \colon \frakK_c \circ H \to G$ satisfying $\rho_{c,0}(G)=\beta \circ (\frakK_c \circ_X v)$:
	\begin{equation}
		\begin{aligned}
			\xymatrix{
				\frakK_c \circ F \ar[r]^-{\frakK_c \circ u} \ar[d]_-0 & \frakK_c \circ G \ar[r]^-{\frakK_c \circ v} \ar[d] \ar@{}[rd]|(.3){\circlearrowright} & \frakK_c \circ H \ar[r]^-{\frakK_c \circ w} \ar[d] \ar[d] \ar@{-->}[ld]^-{\beta} & \frakK_c \circ F[1] \ar[d] \\
				F \ar[r]_-{u} & G \ar[r]_-{v} & H \ar[r]_-{w} & F[1].
			}
		\end{aligned}
	\end{equation}
	Moreover, we obtain 	
	\begin{equation}
		\begin{aligned}
			\beta \circ \rho_{2c,c}(H)
			& = 
			\beta \circ (\frakK_c \circ_X \rho_{c,0}(H)) \\
			& = 
			\beta \circ (\frakK_c \circ_X v) \circ (\frakK_c \circ_X \gamma) \\
			& =
			\rho_{c,0}(G) \circ (\frakK_c \circ_X \gamma) \\
			& =
			\gamma \circ (\frakK_c \circ_X \rho_{c,0}(H)) 
			=
			\gamma \circ \rho_{2c,c}(H),
		\end{aligned}		
	\end{equation}
	which proves the lemma.
\end{proof}

In particular, if $F \to G \to H \toone$ is an exact triangle, $F$ is $a$-torsion, and $G$ is $b$-torsion, then $H$ is $(a+b)$-torsion.

In our later applications, we mainly focus on the interleaving distance on the derived category $\SD(\bfk_{X \times \bR_t})$.
For $c \in \bR_{\ge 0}$, define 
\begin{equation}
	\frakK_c \coloneqq \bfk_{\Delta_X \times \Delta_c} \in \SD(\bfk_{(X \times \bR)^2}),
\end{equation}
where $\Delta_c \coloneqq \{ (t_1,t_2) \mid \|t_1-t_2\| \le c \} \subset \bR^2$ is the thickened diagonal of $\Delta_\bR \subset \bR^2$.
Then the assignment $c \mapsto \frakK_c$ defines a thickening kernel on $X \times \bR_t$.
Hence, we can consider the pseudo-distance $d_\frakK$ on $\SD(\bfk_{X \times \bR_t})$ associated with $\frakK$, which we denote by $d_{X \times \bR_t}$.
This is a slight modification of the relative distance $\dist_{X \times \bR_t/X}$ studied in \cite{petit2020thickening}.
Note also that $\frakK_c \circ F \simeq \bfk_{X \times [-c,c]} \star F$. 
With the notation in \cref{subsection:composition}, for $F,F'\in \SD(\bfk_{X_{12} \times \bR_t})$ and $G,G'\in \SD(\bfk_{X_{23} \times \bR_t})$, 
\begin{equation}\label{eqn:bullet_distance}
	d_{X_{13} \times \bR_t}(F \bullet G,F' \bullet G') \le 
	d_{X_{12} \times \bR_t}(F, F')+d_{X_{23} \times \bR_t}(G,G').
\end{equation}

\subsection{Tamarkin category}\label{subsec:tamarkin}

Let $X$ be a manifold without boundary.
We let $(t;\tau)$ denote the homogeneous coordinate system on $T^*\bR_t$. 
It is proved by Tamarkin~\cite{Tamarkin} that the functor $P_l \coloneqq \bfk_{X \times [0,\infty)} \star (\ast) \colon \SD(\bfk_{X \times \bR_t}) \to \SD(\bfk_{X \times \bR_t})$ defines a projector onto ${}^{\bot} \SD_{\{\tau \le 0\}}(\bfk_{X \times \bR_t})$, where $\{ \tau \le 0 \}= \{(x,t;\xi,\tau) \mid \tau \le 0 \} \subset T^*(X \times \bR_t)$ and ${}^\bot (\ast)$ denotes the left orthogonal.

\begin{definition}
	One defines
	\begin{equation}
		\cD(X) \coloneqq {}^{\bot} \SD_{\{\tau \le 0\}}(\bfk_{X \times \bR_t}),
	\end{equation}
	and call it the \emph{Tamarkin category} of $X$. 
\end{definition}

For an object $F \in \SD(\bfk_{X \times \bR_t})$, $F \in \cD(X)$ if and only if $P_l(F) \simeq F$. 
Note also that $\cD(X) \subset \SD_{\{\tau \ge 0 \}}(\bfk_{X \times \bR_t})$ by \cref{lem:conv-hom-MS}.

\begin{definition}\label{definition:distance_tamarkin}
	One defines $d_{\cD(X)}$ as the restriction of the pseudo-distance $d_{X \times \bR_t}$ on $\SD(\bfk_{X \times \bR_t})$ to $\cD(X)$. 
\end{definition}

We will describe the pseudo-distance using the translation to the $\bR_t$-direction. 
For $c \in \bR$, let $T_c \colon X \times \bR_t \to X \times \bR_t, (x,t) \mapsto (x,t+c)$ be the translation map by $c$ to the $\bR$-direction.
In what follows, we write $T_c$ instead of ${T_c}_*$ for simplicity.
Recall that we have set $\frakK_c \coloneqq \bfk_{\Delta_X \times \Delta_c} \in \SD(\bfk_{(X \times \bR)^2})$.

\begin{lemma}\label{lem:shift}
	Let $F \in \SD_{\{ \tau \ge 0 \}}(\bfk_{X \times \bR_t})$.
	Then $\frakK_c \circ F \simeq {T_{-c}} F$.
\end{lemma}

\begin{proof}
	First we recall that $\frakK_c \circ F \simeq \bfk_{X \times [-c,c]} \star F$. 
	Since there exist an exact triangle $\bfk_{X \times (-c,c]} \to \bfk_{X \times [-c,c]} \to \bfk_{X \times \{-c\}} \toone$ and an isomorphism ${T_{-c}} F \simeq \bfk_{X \times \{-c\}} \star F$, it suffices to show that $\bfk_{X \times (-c,c]} \star F \simeq 0$.
	By \cref{lem:conv-hom-MS-const}, there exists $H \in \SD(\bfk_X)$ such that $\bfk_{X \times (-\infty,0]} \star F \simeq H \boxtimes \bfk_{\bR_t}$.
	Thus, we obtain 
	\begin{equation}
		\begin{aligned}
			\bfk_{X \times (-c,c]} \star F 
			& \simeq 
			\bfk_{X \times (-c,c]} \star \bfk_{X \times (-\infty,0]} \star F \\
			& \simeq 
			\bfk_{X \times (-c,c]} \star (H \boxtimes \bfk_{\bR_t}) \\
			& \simeq H \boxtimes (\bfk_{(-c,c]} \star \bfk_{\bR_t}) 
			\simeq 0,
		\end{aligned}
	\end{equation}
	which completes the proof.
\end{proof}

Let $F \in \SD_{\{ \tau \ge 0\}}(\bfk_{X \times \bR_t})$.
Then, we have an isomorphism
\begin{equation}
	\rR m_*(\tilde{q}_1^{-1} \bfk_{X \times [0,\infty)}\otimes \tilde{q}_2^{-1}F) \simto 
	\rR m_*(\tilde{q}_1^{-1} \bfk_{X \times \{0\}}\otimes \tilde{q}_2^{-1}F)
	\simeq 
	F.
\end{equation}
Hence, for $c, d \in \bR_{\ge 0}$ with $c \le d$, the canonical morphism $\bfk_{X \times [c,\infty)} \to \bfk_{X \times [d,\infty)}$ induces a canonical morphism
\begin{equation}
	\begin{aligned}
		\tau_{c,d}(F) \colon & {T_c} F \simeq 
		\rR m_*(\tilde{q}_1^{-1} \bfk_{X \times [c,\infty)}\otimes \tilde{q}_2^{-1}F) \\
		& \to 
		\rR m_*(\tilde{q}_1^{-1} \bfk_{X \times [d,\infty)}\otimes \tilde{q}_2^{-1}F)
		\simeq 
		{T_d} F.
	\end{aligned}
\end{equation}
By \cref{lem:shift}, the morphism is identified with 
\begin{equation}
	T_{c+d}\rho_{d,c}(F) \colon T_{c+d}(\frakK_d \circ F) \to T_{c+d}(\frakK_c \circ F).
\end{equation}
Hence, a pair $(F,G)$ of objects of $\cD(X)$ is $(a,b)$-isomorphic if and only if there exist morphisms $\alpha, \delta \colon F \to {T_a} G$ and $\beta, \gamma \colon G \to {T_b} F$ such that
\begin{enumerate}
	\renewcommand{\labelenumi}{$\mathrm{(\arabic{enumi})}$}
	\item $F \xrightarrow{\alpha} {T_a} G \xrightarrow{{T_a} \beta} {T_{a+b}} F$ is equal to $\tau_{0,a+b}(F) \colon F \to {T_{a+b}} F$ and
	\item $G \xrightarrow{\beta} {T_b} F \xrightarrow{{T_b} \alpha} {T_{a+b}} G$ is equal to $\tau_{0,a+b}(G) \colon G \to {T_{a+b}} G$.
\end{enumerate}
In this form, we can see that $d_{\cD(X)}$ is similar to the pseudo-distance introduced in \cite{AI20} (see \cref{remark:ab_relation} below).

\begin{remark}\label{remark:ab_relation}
	The terminology has been changed from that in \cite{AI20}.
	In that paper, ``weakly $(a,b)$-isomorphic" in this paper was called ``$(a,b)$-isomorphic". 
	Moreover, we defined the notion of ``$(a,b)$-interleaved" as follows:
	a pair $(F,G)$ of objects of $\cD(X)$ is said to be $(a,b)$-interleaved if there exist morphisms $\alpha, \delta \colon F \to {T_a} G$ and $\beta, \gamma \colon G \to {T_b} F$ satisfying 
	\begin{enumerate}
		\renewcommand{\labelenumi}{$\mathrm{(\arabic{enumi})}$}
		\item $F \xrightarrow{\alpha} {T_a} G \xrightarrow{{T_a} \beta} {T_{a+b}} F$ is equal to $\tau_{0,a+b}(F) \colon F \to {T_{a+b}} F$ and
		\item $G \xrightarrow{\gamma} {T_b} F \xrightarrow{{T_b} \delta} {T_{a+b}} G$ is equal to $\tau_{0,a+b}(G) \colon G \to {T_{a+b}} G$.
	\end{enumerate}
	One can see that 
	\begin{center}
		$(a,b)$-isomorphic $\Rightarrow$ weakly $(a,b)$-isomorphic $\Rightarrow$ $(a,b)$-interleaved.
	\end{center}
	We also remark that the distance $d_{\cD(X)}$ in \cite{AI20} is defined by the relation ``$(a,b)$-interleaved" instead of ``$(a,b)$-isomorphic", and hence it is different from that in \cref{definition:distance_tamarkin}. 
	Later we will prove the main result in \cite{AI20} also holds for the modified $d_{\cD(X)}$ (see \cref{theorem:SQ_inequality}).
\end{remark}

The following proposition is slightly stronger than the similar results in the published version of \cite{AI20}. 

\begin{proposition}[{cf.\ \cite[Prop.~4.15]{AI20}}]\label{proposition:abisomhtpy}
	Let $I$ be an open interval containing the closed interval $[0,1]$ and
	$\cH \in \SD_{\{\tau \ge 0\}}(\bfk_{X \times \bR_t \times I})$.
	Assume that there exist continuous functions $f, g \colon I \to \bR_{\ge 0}$ satisfying
	\begin{equation}
		\MS(\cH) \subset T^*X \times \{(t,s;\tau,\sigma) \mid -f(s) \cdot \tau \le \sigma \le g(s) \cdot \tau \}.
	\end{equation}
	Then $\left(\cH|_{X \times \bR_t \times \{0\}},\cH|_{X \times \bR_t \times \{1\}} \right)$ is weakly $\left( \int_{0}^{1} g(s) ds+\varepsilon, \int_{0}^{1} f(s) ds +\varepsilon \right)$-isomorphic for any $\varepsilon \in \bR_{>0}$.
\end{proposition}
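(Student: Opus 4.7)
The plan follows the strategy of \cite[Prop.~4.15]{AI20}, which this proposition strengthens. We use a shear diffeomorphism to straighten the $s$-dependence of $\MS(\cH)$ and then extract the interleaving morphisms via a propagation statement derived from the microlocal Morse lemma.

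Set $a \coloneqq \int_0^1 g(s)\,ds + \varepsilon$, $b \coloneqq \int_0^1 f(s)\,ds + \varepsilon$, $G(s) \coloneqq \int_0^s (g(u)+\varepsilon)\,du$, and $H(s) \coloneqq \int_s^1 (f(u)+\varepsilon)\,du$, so that $G(0)=0$, $G(1)=a$, $H(0)=b$, $H(1)=0$, with $G'(s) = g(s)+\varepsilon$ and $H'(s) = -(f(s)+\varepsilon)$. Consider the diffeomorphisms $\Phi(x,t,s) = (x, t+G(s), s)$ and $\Psi(x,t,s) = (x, t+H(s), s)$ of $X \times \bR \times I$. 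The pushforward rule for microsupports---which sends $(\xi, \tau, \sigma)$ to $(\xi, \tau, \sigma - G'(s)\tau)$ under $\Phi_*$ and analogously for $\Psi$---combined with the hypothesis $-f(s)\tau \le \sigma \le g(s)\tau$ on $\MS(\cH)$ yields
\[
\MS(\Phi_*\cH) \subset \{\tau \ge 0,\ \sigma' \le -\varepsilon\tau\}, \quad \MS(\Psi_*\cH) \subset \{\tau \ge 0,\ \sigma' \ge \varepsilon\tau\},
\]
together with the slice identifications $(\Phi_*\cH)|_{s=0} = \cH|_{s=0}$, $(\Phi_*\cH)|_{s=1} = T_a(\cH|_{s=1})$, $(\Psi_*\cH)|_{s=0} = T_b(\cH|_{s=0})$, $(\Psi_*\cH)|_{s=1} = \cH|_{s=1}$.

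The core technical step is a propagation lemma: for any $F \in \SD_{\{\tau \ge 0\}}(\bfk_{X \times \bR \times I})$ with $\MS(F) \subset \{\sigma' \le -\varepsilon \tau\}$ (resp.\ $\{\sigma' \ge \varepsilon\tau\}$), there is a canonical morphism relating $F|_{s=0}$ and $F|_{s=1}$. To construct it, apply the microlocal Morse lemma (\cref{proposition:microlocalmorse}) to linear functions of the form $\varphi(x,t,s) = \lambda t - s$ (resp.\ $\lambda t + s$) with $\lambda > 1/\varepsilon$, so that $d\varphi \notin \MS(F)$; combining Morse isomorphisms on nested half-spaces $\{\varphi < c\}$ with appropriate adjunctions and restrictions to slices yields the canonical morphism. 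Applying this lemma to $\Phi_*\cH$ and $\Psi_*\cH$, combined with the canonical translation morphisms $\tau_{c,d}$ and the autoequivalence properties of the $T_c$'s, produces morphisms $\alpha \colon \cH|_{s=0} \to T_a(\cH|_{s=1})$ and $\beta \colon \cH|_{s=1} \to T_b(\cH|_{s=0})$; setting $\delta \coloneqq \alpha$ and $\gamma \coloneqq \beta$ makes condition~(3) of \cref{def:defab}(ii) automatic.

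To verify conditions (1) and (2), observe that the composition $T_a\beta \circ \alpha \colon \cH|_{s=0} \to T_{a+b}(\cH|_{s=0})$ can be realized as the propagation along a concatenated sheaf on $X \times \bR \times [0,2]$ obtained by gluing $\Phi_*\cH$ on $[0,1]$ with an $s$-reversed, $T_a$-shifted version of $\Psi_*\cH$ on $[1,2]$; since the net $t$-translation from $s=0$ to $s=2$ along this concatenation is precisely $a+b$, naturality of the propagation morphism identifies the composition with $\tau_{0, a+b}(\cH|_{s=0})$. Condition (2) follows symmetrically. The main obstacle is the rigorous construction of the propagation morphism as fully canonical and its compatibility with $t$-translations and composition of shears; this compatibility follows from iterating the microlocal Morse lemma together with the triangle inequality for microsupports (\cref{lem:conv-hom-MS}).
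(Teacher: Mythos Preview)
Your route via shear diffeomorphisms is genuinely different from the paper's. The paper simply reruns the argument of \cite[Prop.~4.15]{AI20}, whose structure (visible in the analogous \cref{prop:distance-constant-bicone} and the proof of \cref{thm:distance}) is: (i) subdivide $[0,1]$ to reduce to the constant-$f,g$ case; (ii) in that case, form the two exact triangles
\[
\rR q_*\cH_{X\times\bR_t\times(0,1]}\to \rR q_*\cH_{X\times\bR_t\times[0,1]}\to \cH|_{s=0}\toone,
\qquad
\rR q_*\cH_{X\times\bR_t\times[0,1)}\to \rR q_*\cH_{X\times\bR_t\times[0,1]}\to \cH|_{s=1}\toone,
\]
show the leftmost terms are torsion by a direct microsupport estimate, and then apply \cref{lemma:torsion_to_weakisom} (this replacement of \cite[Lem.~4.14]{AI20} is the sole novelty). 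That lemma outputs \emph{four} morphisms $\alpha,\beta,\gamma,\delta$ with no reason for $\alpha=\delta$ or $\beta=\gamma$, which is exactly why only the weak form is claimed.

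Your sketch has two real gaps. First, the ``propagation lemma'' is not actually built: \cref{proposition:microlocalmorse} produces isomorphisms between \emph{sections over sublevel sets} of a function that is \emph{proper on $\Supp$}, not canonical morphisms between the slices $F|_{s=0}$ and $F|_{s=1}$; your test function $\varphi(x,t,s)=\lambda t-s$ is not proper on $\Supp(\cH)$ in general, and the passage from sublevel sections to slice restrictions is precisely what the exact-triangle argument above supplies. Second, the verification of conditions~(1) and~(2) by ``concatenating'' $\Phi_*\cH$ with an $s$-reversed shift of $\Psi_*\cH$ is not a legitimate operation in $\SD(\bfk_{X\times\bR\times[0,2]})$: there is no gluing of derived-category objects along a hypersurface, and even granting some ad hoc model, identifying the resulting composite with $\tau_{0,a+b}$ requires a functorial naturality statement you have not provided. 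Note finally that taking $\delta=\alpha$, $\gamma=\beta$ would yield the strictly stronger conclusion that the pair is $(a,b)$-isomorphic in the sense of \cref{def:defab}(iii); nothing in your outline justifies that, and the paper's method does not produce it.
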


\begin{proof}
	The proof is similar to that of \cite[Prop.~4.15]{AI20}. 
	We only need to replace \cite[Lem.~4.14]{AI20} with \cref{lemma:torsion_to_weakisom}.
\end{proof}

\subsection{Sheaf quantization of Hamiltonian isotopies}

In this subsection, we first recall the existence and uniqueness result of a sheaf quantization of a Hamiltonian isotopy due to Guillermou--Kashiwara--Schapira~\cite{GKS}.

Let $M$ be a connected manifold without boundary and $I$ an open interval of $\bR$ containing the closed interval $[0,1]$. 
We say that a $C^\infty$-function $H=(H_s)_{s \in I} \colon T^*M \times I \to \bR$ is \emph{timewise compactly supported} if $\supp(H_s)$ is compact for any $s \in I$. 
A compactly supported \emph{Hamiltonian isotopy} is a flow of the Hamiltonian vector field of a timewise compactly supported $C^\infty$-function $H$. 
In this paper, the isotopy associated with $H$ is denoted by $\phi^H=(\phi^H_s)_{s \in I} \colon T^*M \times I \to T^*M$. 
Note that $(\phi^H_s)^{-1} = \phi^{\overline{H}}_s$ with $\overline{H}_s(p) \coloneqq -H_s(\phi^H_s(p))$.
Moreover, for two timewise compactly supported functions $H, H' \colon T^*M \times I \to \bR$, we have 
\begin{equation}
	\phi^H_s \circ \phi^{H'}_s = \phi^{H \sharp H'}_s, 
\end{equation}
where $(H \sharp H')_s(p) \coloneqq H_s(p)+H'_s((\phi^H_s)^{-1}(p))$.
In particular, for two timewise compactly supported functions $H, H' \colon T^*M \times I \to \bR$, 
\begin{equation}
	(\phi^H_s)^{-1} \circ \phi^{H'}_s = \phi^{\overline{H} \sharp H'}_s,
\end{equation}
where $(\overline{H} \sharp H')_s(p)=(H'-H)_s(\phi^H_s(p))$.

\begin{definition}\label{definition:homogeneous_ham}
	Let $\phi^H=(\phi^H_s)_{s\in I} \colon T^*M \times I \to T^*M$ be the compactly supported Hamiltonian isotopy associated with a timewise compactly supported function $H \colon T^*M \times I \to \bR$.
	\begin{enumerate}
		\item One defines $\wh{H} \colon \rT(M \times \bR_t) \times I \to \bR$ by 
		\begin{equation}
			\wh H((x,t;\xi,\tau),s)\coloneqq 
			\begin{cases}
				\tau H((x;\xi/\tau),s) & (\tau\neq 0)\\
				0 & (\tau =0)
			\end{cases}
		\end{equation}
		and $\wh \phi=(\wh \phi_s)_{s \in I} \colon \rT(M \times \bR_t) \times I \to \rT(M \times \bR_t)$ to be the homogeneous Hamiltonian flow of $\wh H$.
		\item One defines a conic Lagrangian submanifold $\Lambda_{\wh \phi}$ of $\rT(M \times \bR)^2 \times T^*I$ by 
        \begin{equation}
			\begin{aligned}
				\Lambda_{\wh{\phi}}
				\coloneqq 
				& 
				\left\{
				\left(
				\wh{\phi}((x,t;\xi,\tau),s), (x,t;-\xi,-\tau), (s;-\wh{H}(\wh{\phi}((x,t;\xi,\tau),s),s)) \right)
				\; \middle| \; \right. \\
				& \hspace{10pt} \left. (x,t;\xi,\tau) \in \rT(M \times \bR_t), 
				s \in I
				\right\}.
			\end{aligned}
		\end{equation}
	\end{enumerate}
\end{definition}

For a timewise compactly supported function $H \colon T^*M \times I \to \bR$, we also define $u=(u_s)_{s \in I} \colon T^*M \times I \to \bR$ by $u_s(p)=\int_0^s (H_{s'}-\alpha(X_{s'}))(\phi^H_{s'}(p)) ds'$, where $(X_s)_{s \in I}$ is the Hamiltonian vector field for $H$. 
Then $\wh \phi$ can be written as 
\begin{equation}
	\wh \phi_s (x,t;\xi,\tau)=(x',t+u_s(x;\xi/\tau);\xi',\tau),
\end{equation}
where $(x';\xi'/\tau)=\phi_s(x;\xi/\tau)$ for $\tau \neq 0$, and $\wh{\phi}_s(x,t;\xi,0)=(x,t;\xi,0)$.
Hereafter, we use the convention that $\tau H_s(\phi_s(x;\xi/\tau))=0$ and $u_s(x;\xi/\tau)=0$ when $\tau = 0$.
Moreover, we write $(x';\xi'/\tau)=\phi_s(x;\xi/\tau)$ also for $\tau=0$, in which case it is understood that $(x';\xi')=(x;\xi)$.
We have $du_s=\alpha-(\phi_s)^*\alpha$, which gives the following properties.

\begin{lemma}\label{lemma:properties_hamiltonian}
	If $\phi_1=\id_{T^*M}$, then $u_1\equiv 0$. 
\end{lemma}

The main theorem of \cite{GKS} is the following.

\begin{theorem}[{\cite[Thm.~3.7]{GKS}}]\label{thm:GKSmain}
	Let $\phi \colon T^*M \times I \to T^*M$ be a compactly supported Hamiltonian isotopy and $H \colon T^*M \times I \to \bR$ be a $C^\infty$-function with Hamiltonian flow $\phi$.
	Then there exists a unique simple object $\tl{K}^H \in \SD(\bfk_{(M \times \bR)^2 \times I})$ such that $\rMS(\tl{K}^H)=\Lambda_{\wh{\phi}}$ and $\tl{K}^H|_{(M \times \bR)^2 \times \{0\}} \simeq \bfk_{\Delta_{M \times \bR}}$.
\end{theorem}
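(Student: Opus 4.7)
The plan is to split the proof into separate existence and uniqueness arguments, exploiting the fact that $\Lambda_{\wh\phi}$ is a smooth connected conic Lagrangian submanifold of $\rT((M \times \bR)^2 \times I)$ that over the slice $s = 0$ coincides with the conormal bundle of the diagonal $\Delta_{M \times \bR}$. I will rely throughout on the microlocal classification of simple sheaves along a smooth conic Lagrangian (\cite[Ch.~VII]{KS90}), which pins down such objects up to an overall shift once a local generator is fixed.

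For uniqueness, let $\tl{K}_1, \tl{K}_2$ be two objects satisfying the hypotheses. At $s = 0$, the specified isomorphisms with $\bfk_{\Delta_{M \times \bR}}$ yield a canonical isomorphism $\tl{K}_1|_{s=0} \simto \tl{K}_2|_{s=0}$. Because both $\tl{K}_i$ are simple along the same Lagrangian, this isomorphism lifts uniquely to a neighborhood of $\{s=0\}$. Let $J \subset I$ be the set of $s_0$ for which such an extension exists on a neighborhood of $\{s = s_0\}$; it is open by the same rigidity, and closed by the microlocal Morse lemma (\cref{proposition:microlocalmorse}) applied to the function $s$ on $\cRHom(\tl{K}_1, \tl{K}_2)$, since away from $\{\tau = 0\}$ the covector $ds$ does not lie in the fibered sum $\Lambda_{\wh\phi}^a + \Lambda_{\wh\phi}$. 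Hence $J = I$.

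For existence, I construct $\tl{K}^H$ locally in $s$ and glue by uniqueness. On a short interval $[0,\varepsilon]$ the flow $\wh\phi_s$ is $C^1$-close to the identity, so $\Lambda_{\wh\phi}|_{s\in [0,\varepsilon]}$ admits a generating function $\Phi$ with $\Phi = 0$ on $\Delta_{M \times \bR} \times \{0\}$, and the sheaf $\bfk_{\{\Phi \ge 0\}}$ is simple with reduced microsupport $\Lambda_{\wh\phi}|_{[0,\varepsilon]}$ and correct restriction at $s = 0$. To cover an arbitrary $s \in I$, one partitions $I$ into short intervals, applies the same construction to the flow reparametrized to start from each endpoint, and composes the local kernels using the $\bullet$-operation of \cref{subsection:composition}; compatibility on overlaps is ensured by uniqueness.

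The main obstacle is the closedness step in the uniqueness argument at $\{\tau = 0\}$: since $\wh H$ vanishes there, the $ds$-covector can fail to avoid $\Lambda_{\wh\phi}$, so the microlocal Morse lemma does not apply directly. The remedy is to work first on the open set $\{\tau \ne 0\}$, where the propagation succeeds, and then to invoke the conicity of $\wh H$ together with the simplicity hypothesis to extend the isomorphism through $\{\tau = 0\}$; along the latter locus the sheaves $\tl{K}_i$ are essentially pulled back from $(M \times \bR)^2$ and are rigidly determined by their restriction to any open subset of $\{\tau > 0\}$. A secondary but routine point is verifying that the generating-function recipe really yields a simple sheaf, which amounts to checking a standard non-degeneracy condition on $\Phi$.
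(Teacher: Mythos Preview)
The paper does not prove this theorem: it is quoted verbatim from \cite[Thm.~4.3]{GKS} and no argument is given in the present paper. There is therefore nothing to compare your proposal against here.

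For what it is worth, your outline follows the broad shape of the original Guillermou--Kashiwara--Schapira proof: local existence via a generating function for short time, global existence by composing kernels over a partition of $I$, and uniqueness by propagating an isomorphism from $s=0$. Your treatment of uniqueness at $\{\tau=0\}$ is, as you note yourself, the delicate point, and the fix you sketch (``essentially pulled back\ldots\ rigidly determined'') is too vague to be a proof; in the GKS paper this is handled by a more careful argument showing the kernel is determined by its restriction and the microsupport condition via invertibility under composition, rather than by the microlocal Morse lemma alone. If you want to include an actual proof in your write-up, you should either reproduce the GKS argument more faithfully or simply cite the result as this paper does.
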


Set $\tl{K}^H_{s}\coloneqq \tl{K}^H|_{(M \times \bR)^2 \times \{s\}} \in \SD(\bfk_{(M \times \bR)^2})$.
Note that $\mathring{\MS}(\tl{K}^H_s) \subset \Lambda_{\wh{\phi}} \circ T^*_sI$.
We also have 
\begin{equation}\label{equation:SQ_composition}
	\tl{K}^H_s \circ \tl{K}^{H'}_s \simeq \tl{K}^{H \sharp H'}_s.
\end{equation}
It is also proved by Guillermou--Schapira~\cite[Prop.~4.29]{GS14} that the composition with $\tl{K}^H_s$ defines a functor
\begin{equation}
	\tl{K}^H_s \circ (\ast) \colon \cD(M) \lto \cD(M).
\end{equation}

Define $q \colon (M \times \bR)^2 \times I \to M^2 \times \bR_t \times I, (x_1,t_1,x_2,t_2,s) \mapsto (x_1,x_2,t_1-t_2,s)$ and set 
\begin{equation}\label{eq:estimate_LambdaH}
	\begin{aligned}
		\Lambda'_H & \coloneqq q_\pi q_d^{-1}(\Lambda_{\wh{\phi}}) \\
		&=\left\{
		\left((x';\xi'),(x;-\xi), (u_s(x;\xi/\tau);\tau), (s;-\tau H_s(\phi_s(x;\xi/\tau)))\right)
		\; \middle| \; \right. \\
		& \hspace{25pt}  (x;\xi) \in T^*M, s \in I, \tau\in \bR,
		(x';\xi'/\tau)=\phi_s(x;\xi/\tau)  
		\Bigr\} \\
		&\subset \rT(M^2 \times \bR_t \times I).
	\end{aligned}
\end{equation} 
Then the inverse image functor $q^{-1}$ gives an equivalence (see \cite[Cor.~2.3.2]{Gu23})
\begin{equation}\label{eq:equivalence_GKS}
	\{ K \in \SD(\bfk_{M^2 \times \bR_t \times I}) \mid \rMS(K) = \Lambda'_H \} 
	\simto 
	\{ \tl{K} \in \SD(\bfk_{(M \times \bR)^2 \times I}) \mid \rMS(\tl{K}) = \Lambda_{\wh{\phi}} \}.
\end{equation}
Recall that we have a projector $P_l \colon \SD(\bfk_{M^2 \times \bR_t}) \to {}^\perp \SD_{\{\tau \le 0\}}(\bfk_{M^2 \times \bR_t})=\cD(M^2)$ and similarly for $\cD(M^2 \times I)$.

\begin{definition}
	Let $H \colon T^*M \times I \to \bR$ be a timewise compactly supported function.
	\begin{enumerate}
		\item One defines $K^H \in \SD(\bfk_{M^2 \times \bR_t \times I})$ to be the object such that $\rMS(K^H)=\Lambda'_H$ and $K^H|_{M^2 \times \bR_t \times \{0\}} \simeq \bfk_{\Delta_M \times \{0\}}$, that is, the object $K^H$ satisfying $q^{-1}K^H \simeq \tl{K}^H$. 
		One also sets $K^H_s \coloneqq K^H|_{M^2 \times \bR_t \times \{s\}}$ for simplicity.
		\item One defines $\cK^H_s \coloneqq P_l(K^H_s) \in \cD(M^2)$ and $\cK^H \coloneqq P_l(K^H) \in \cD(M^2 \times I)$. 
	\end{enumerate}
\end{definition}

Note that $\tl{K}^H_s \circ F \simeq K^H_s \bullet F$ for any $F \in \cD(M)$ and $s \in I$.
By the associativity, for any $F \in \cD(M)$ and $s \in I$, we have 
\begin{equation}
	\begin{aligned}
		K^H_s \bullet F 
		& \simeq K^H_s \bullet (\bfk_{M \times [0,\infty)} \star F) \\
		& \simeq (K^H_s \bullet_{M} \bfk_{M \times [0,\infty)}) \bullet F \\
		& \simeq (K^H_s \star \bfk_{M \times M \times [0,\infty)}) \bullet F 
		\simeq \cK^H_s \bullet F.
	\end{aligned}
\end{equation}

\section{Completeness of derived category of sheaves}\label{section:limit}

In this section, we prove the completeness of the derived category $\SD(\bfk_X)$ with respect to the pseudo-distance $d_\frakK$ associated with a thickening kernel $\frakK$.
If a category with a persistence structure admits any sequential colimit, then the category is complete with respect to the interleaving distance (Cruz~\cite{cruz2019metric} and Scoccola~\cite{scoccola2020locally}).
However, the derived category does not admit sequential colimits. 
Hence, we construct a limit object by using a homotopy colimit instead.
Let $X$ be a manifold throughout this section.

In \cref{lemma:torsion_to_weakisom}, we saw that for an exact triangle $F \to G \to H \toone$, if $H$ is $c$-torsion, then $(F,G)$ is weakly $(0,c)$-isomorphic. 
Conversely, we obtain \cref{proposition:interleaved_to_torsion_thickening} below, which is a key to our construction of limit objects.

\begin{lemma}\label{lemma:cone_torsion_thickening}
	Let $F \in \SD(\bfk_X)$ and $a \in \bR_{\ge 0}$ and consider the exact triangle 
	\begin{equation}
		\frakK_a \circ F \xrightarrow{\rho_{a,0}(F)} F \to \Cone(\rho_{a,0}(F)) \toone.
	\end{equation}
	Then $\Cone(\rho_{a,0}(F))$ is $2a$-torsion.
\end{lemma}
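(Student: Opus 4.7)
My plan is to show directly that $\rho_{2a,0}(C) = 0$ for $C = \Cone(\rho_{a,0}(F))$. Write the given triangle as $\frakK_a \circ F \xrightarrow{u} F \xrightarrow{v} C \xrightarrow{w} \frakK_a \circ F[1]$, with $u = \rho_{a,0}(F)$. The three ingredients are: the naturality of each $\rho_{s,t}$ as a morphism of functors $\frakK_s \circ (-) \Rightarrow \frakK_t \circ (-)$; the monoidal-presheaf decomposition $\rho_{2a,0} = \rho_{a,0} \circ \rho_{2a,a}$; and the exactness of the triangle obtained by applying $\frakK_{2a} \circ (-)$ to the original one.

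First I will produce a factorization $\rho_{a,0}(C) = \psi \circ \frakK_a(w)$ for some $\psi \colon \frakK_{2a} \circ F[1] \to C$. Naturality of $\rho_{a,0}$ on $v$ gives $\rho_{a,0}(C) \circ \frakK_a(v) = v \circ \rho_{a,0}(F) = v \circ u = 0$. The long exact sequence of $\Hom(-,C)$ applied to the triangle $\frakK_{2a} \circ F \to \frakK_a \circ F \xrightarrow{\frakK_a(v)} \frakK_a \circ C \xrightarrow{\frakK_a(w)} \frakK_{2a} \circ F[1]$ (which is $\frakK_a \circ (-)$ of the original, identifying $\frakK_a \circ \frakK_a \simeq \frakK_{2a}$) then lifts $\rho_{a,0}(C)$ along $\frakK_a(w)^*$ to the required $\psi$.

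Next I decompose $\rho_{2a,0}(C) = \rho_{a,0}(C) \circ \rho_{2a,a}(C) = \psi \circ \frakK_a(w) \circ \rho_{2a,a}(C)$. Naturality of $\rho_{2a,a}$ on $w$ rewrites the last two factors as $\rho_{2a,a}(\frakK_a \circ F[1]) \circ \frakK_{2a}(w)$; the monoidal structure identifies $\rho_{2a,a}(\frakK_a \circ F[1])$ with $\rho_{3a,2a}(F)[1]$. Meanwhile, applying $\frakK_{2a} \circ (-)$ to the original triangle yields the exact triangle $\frakK_{3a} \circ F \xrightarrow{\rho_{3a,2a}(F)} \frakK_{2a} \circ F \to \frakK_{2a} \circ C \xrightarrow{\frakK_{2a}(w)} \frakK_{3a} \circ F[1]$ (the first map identified with $\rho_{3a,2a}(F)$ since $u = \rho_{a,0}(F)$). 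Vanishing of two consecutive morphisms in an exact triangle gives $\rho_{3a,2a}(F)[1] \circ \frakK_{2a}(w) = 0$, whence $\rho_{2a,0}(C) = \psi \circ 0 = 0$.

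The main subtlety is the monoidal-presheaf bookkeeping, in particular the identifications $\rho_{s,t}(\frakK_{s'} \circ X) \simeq \rho_{s+s',t+s'}(X)$ and $\frakK_s(\rho_{s',t'}(X)) \simeq \rho_{s+s',s+t'}(X)$ used above, which follow from the axiom $\rho_{s,t} \otimes \rho_{s',t'} = \rho_{s+s',t+t'}$ of the monoidal presheaf $\frakK$ but require care to track. Aside from these, the proof is a formal application of naturality, the triangulated-category long exact sequence, and the vanishing of two consecutive morphisms in an exact triangle; an alternative route would mimic the octahedral argument of \cref{proposition:interleaved_to_torsion} more closely, but the direct approach above is cleaner in the abstract thickening-kernel setting.
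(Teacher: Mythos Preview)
Your proof is correct and follows essentially the same approach as the paper: both arguments hinge on the vanishing $\rho_{3a,2a}(F)[1]\circ\frakK_{2a}(w)=0$ (consecutive morphisms in the $\frakK_{2a}$-image of the defining triangle), combined with naturality of the $\rho$'s and a factorization through the $\frakK_a$-image of the triangle. The only cosmetic difference is that the paper factors $\rho_{2a,a}(C)$ forward through $\frakK_a\circ F$ and then uses $v\circ u=0$, whereas you factor $\rho_{a,0}(C)$ backward through $\frakK_{2a}\circ F[1]$ via the long exact sequence; these are dual ways to read the same diagram.
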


\begin{proof}
	Set $C \coloneqq \Cone(\rho_{a,0})$ and consider the following commutative diagram:
	\begin{equation}
		\begin{aligned}
			\xymatrix{
				\frakK_{3a} \circ F \ar[r] & \frakK_{2a} \circ F \ar[r] \ar[d] & \frakK_{2a} \circ C \ar[r] \ar[d] \ar@{-->}[ld] & \frakK_{3a} \circ F[1] \ar[r] \ar[d] & \frakK_{2a} \circ F[1] \ar@{=}[ld] \\
				& \frakK_{a} \circ F \ar[r] \ar[d] \ar@{=}[ld] & \frakK_{a} \circ C \ar[r] \ar[d] & \frakK_{2a} \circ F[1] \ar[d] & \\
				\frakK_{a} \circ F \ar[r] & F \ar[r] & C \ar[r] & \frakK_{a} \circ F[1]. & 
			}
		\end{aligned}
	\end{equation}
	The composite morphism $\frakK_{2a} \circ C \to \frakK_{3a} \circ F[1] \to \frakK_{2a} \circ F[1]$ is zero since $\frakK_{3a} \circ F \to \frakK_{2a} \circ F \to \frakK_{2a} \circ C \toone$ is an exact triangle.
	By the commutativity, the composite $\frakK_{2a} \circ C \to \frakK_{a} \circ C \to \frakK_{2a} \circ F[1]$ is also zero. 
	Hence, there exists a morphism $\frakK_{2a} \circ C \to \frakK_{a} \circ F$ that makes the lower triangle commutative. 
	Therefore, the morphism $\frakK_{2a} \circ C \to C$ factors the composite $\frakK_{a} \circ F \to F \to C$ and hence it is zero.
\end{proof}

\begin{proposition}\label{proposition:interleaved_to_torsion_thickening}
	Let $F, G \in \SD(\bfk_X)$ and assume that the pair $(F,G)$ is $(a,b)$-isomorphic. 
	Let $(\alpha \colon \frakK_a \circ F \to G, \beta \colon \frakK_b \circ G \to F)$ be an $(a,b)$-isomorphism for $(F,G)$ and consider the exact triangle
	\begin{equation}
		\frakK_a \circ F \to G \to \Cone(\alpha) \toone.
	\end{equation}
	Then $\Cone(\alpha)$ is $3(a+b)$-torsion.
\end{proposition}

\begin{proof}
	Consider the three exact triangles:
	\begin{equation}
		\begin{aligned}
			\xymatrix@R=10pt@C=30pt{
				\frakK_{a+b} \circ F \ar[r]^-{\frakK_b \circ \alpha} & \frakK_b \circ G \ar[r] & \frakK_b \circ \Cone(\alpha) \ar[r]^-{u} & \frakK_{a+b} \circ F[1], \\
				\frakK_{a+b} \circ F \ar[r]^-{\rho_{a+b,0}(F)} & F \ar[r] & \Cone(\rho_{a+b,0}(F)) \ar[r] & \frakK_{a+b} \circ F[1], \\
				\frakK_b \circ G \ar[r]^-{\beta} & F \ar[r] & \Cone(\beta) \ar[r] & \frakK_b \circ G[1].
			}
		\end{aligned}
	\end{equation}
	Note that $\beta \circ (\frakK_b \circ_X \alpha)=\rho_{a+b,0}(F)$. 
	By the octahedral axiom, we have the following commutative diagram, where the bottom row is also an exact triangle:
	\begin{equation}
		\begin{aligned}
			\xymatrix{
				\frakK_{a+b} \circ F \ar[r]^-{\frakK_b \circ \alpha} \ar@{=}[d] & \frakK_b \circ G \ar[r] \ar[d]^-{\beta} & \frakK_b \circ \Cone(\alpha) \ar[r]^-{u} \ar[d]^-{v} & \frakK_{a+b} \circ F[1] \ar@{=}[d] \\
				\frakK_{a+b} \circ F \ar[r]^-{\rho_{a+b,0}(F)} \ar[d]_-{\frakK_b \circ \alpha} & F \ar[r] \ar@{=}[d] & \Cone(\rho_{a+b,0}(F)) \ar[r] \ar[d] & \frakK_{a+b} \circ F[1] \ar[d] \\
				\frakK_b \circ G \ar[r]^-{\beta} \ar[d] & F \ar[r] \ar[d] & \Cone(\beta) \ar[r] \ar@{=}[d] & \frakK_b \circ G[1] \ar[d] \\
				\frakK_b \circ \Cone(\alpha) \ar[r] & \Cone(\rho_{a+b,0}(F)) \ar[r] & \Cone(\beta) \ar[r] & \frakK_b \circ \Cone(\alpha)[1].
			}
		\end{aligned}
	\end{equation}
	In particular, the morphism $u \colon \frakK_b \circ \Cone(\alpha) \to \frakK_{a+b} \circ F[1]$ factors through $\Cone(\rho_{a+b,0}(F))$. 
	Then the commutative diagram 
	\begin{equation}
		\begin{aligned}
			\xymatrix@C=60pt{
				\frakK_{2a+3b} \circ \Cone(\alpha) \ar[r]^-{\rho_{2a+3b,b}(\Cone(\alpha))} \ar[d]_-{\frakK_{2a+2b} \circ v} & \frakK_b \circ \Cone(\alpha) \ar[r]^-{u} \ar[d]_-{v} & \frakK_{a+b} \circ F[1] \\
				\frakK_{2a+2b} \circ \Cone(\rho_{a+b,0}(F)) \ar[r]_-{0} & \Cone(\rho_{a+b,0}(F)) \ar[ru] & 
			}
		\end{aligned}
	\end{equation}
	proves that the composite morphism in the first row is zero by \cref{lemma:cone_torsion_thickening}.
	Thus, we obtain a morphism $\frakK_{3a+3b} \circ \Cone(\alpha) \to \frakK_{a+b} \circ G$ that makes the following diagram commute, where the vertical arrows are the corresponding $\rho$'s:
	\begin{equation}
		\begin{aligned}
			\xymatrix@C=40pt{
				& & \frakK_{3a+3b} \circ \Cone(\alpha) \ar@{-->}[ld] \ar[d] &  \\ 
				& \frakK_{a+b} \circ G \ar[ld]_-{\frakK_a \circ \beta} \ar[d] \ar[r] & \frakK_{a+b} \circ \Cone(\alpha) \ar[r]^-{\frakK_b \circ u} \ar[d] & \frakK_{2a+b} \circ F[1] \\
				\frakK_a \circ F \ar[r]_-{\alpha} & G \ar[r] & \Cone(\alpha) \ar[r] & \frakK_a \circ F[1].
			}
		\end{aligned}
	\end{equation}
	Hence, the morphism $\rho_{3a+3b,0}(\Cone(\alpha))$ factors the composite morphism $\frakK_a \circ F \to G \to \Cone(\alpha)$ and thus it is zero.
\end{proof}

\begin{theorem}\label{theorem:limit_thickening}
	Let $(F_n)_{n \in \bN}$ be a sequence of objects in $\SD(\bfk_X)$ and assume that $F_n$ and $F_{n+1}$ are $a_n$-isomorphic with $\sum_{n} a_n < \infty$. 
	Set $a_{\ge n} \coloneqq \sum_{k \ge n} a_k$. 
	Then there exists an object $F_\infty \in \SD(\bfk_X)$ such that $(F_n,F_\infty)$ is $(2a_{\ge n}, 24a_{\ge n})$-isomorphic for any $n \in \bN$.
	In particular, $d_\frakK(F_n,F_\infty) \to 0 \ (n \to \infty)$.
\end{theorem}

\begin{proof}
	Set $G_n \coloneqq \frakK_{a_{\ge n}} \circ F_n$. 
	Then we have a morphism $\alpha_{n,m} \colon G_n \to G_m$ for $n \le m$ and get an inductive system $(G_n)_{n \in \bN}$. 
	Let $F_\infty \coloneqq \hocolim_{n} G_n \in \SD(\bfk_{X})$. 
	
	We fix $n$ and consider the cone $C_{n,m}$ of the morphism $\alpha_{n,m} \colon G_n \to G_m$.
	By composing $\beta_m$'s, we obtain $\beta_{m,n} \colon G_m \to G_n$.
	Since $(\alpha_{n,m}, \beta_{m,n})$ gives an $a_{\ge n}$-isomorphism between $G_n$ and $G_m$, the cone $C_{n,m}$ is $6a_{\ge n}$-torsion by \cref{proposition:interleaved_to_torsion_thickening}.
	Consider the following commutative diagram with solid arrows:
	\begin{equation}
		\begin{aligned}
			\xymatrix{
				G_n^{\oplus \bN} \ar[r] \ar[d] & \bigoplus_{m \ge n} G_m \ar[r] \ar[d] & \bigoplus_{m \ge n} C_{n,m} \ar[r] \ar@{..>}[d] & G_n^{\oplus \bN}[1] \ar[d] \\
				G_n^{\oplus \bN} \ar[r] \ar[d] & \bigoplus_{m \ge n} G_m \ar[r] \ar[d] & \bigoplus_{m \ge n} C_{n,m} \ar[r] \ar@{..>}[d] & G_n^{\oplus \bN}[1] \ar[d] \\
				\hocolim_m G_n \ar@{..>}[r] \ar[d] & \hocolim_{m \ge n} G_m \ar@{..>}[r] \ar[d] & H \ar@{..>}[r] \ar@{..>}[d] & \hocolim_m G_n[1] \ar[d] \\
				G_n^{\oplus \bN}[1] \ar[r] & \bigoplus_{m \ge n} G_m[1] \ar[r] & \bigoplus_{m \ge n} C_{n,m}[1] \ar[r] & G_n^{\oplus \bN}[2]. 
			}
		\end{aligned}
	\end{equation}
	Then by \cite[Exercise~10.6]{KS06}, the dotted arrows can be completed so that the right bottom square is anti-commutative, all the other squares are commutative, and all the rows and all the columns are exact triangles. 
	Since $\bigoplus_{m \ge n} C_{n,m}$ is $6a_{\ge n}$-torsion, $H$ is $12a_{\ge n}$-torsion.
	Noticing that $\hocolim_m G_n \simeq G_n$ and $\hocolim_{m \ge n} G_m \simeq F_\infty$, by \cref{lemma:torsion_to_weakisom} $(G_n,F_\infty)$ is weakly $(0,12a_{\ge n})$-isomorphic. 
	Since we set $G_n = \frakK_{a_{\ge n}} \circ F_n$, we find that $(F_n,F_\infty)$ is weakly $(a_{\ge n},12a_{\ge n})$-isomorphic, which implies that $(F_n,F_\infty)$ is $(2a_{\ge n},24a_{\ge n})$-isomorphic (see \cref{remark:distances}).
\end{proof}

\begin{remark}
	One can prove a similar completeness result in a more general setting, i.e., for a triangulated category with a persistence structure (cf.\ persistence triangulated category by Biran--Cornea--Zhang~\cite{biran2021triangulation}). 
	Here we do not go into details.
\end{remark}

\begin{corollary}\label{corollary:limit_Tamarkin}
	The derived category of sheaves $\SD(\bfk_X)$ is complete with respect to the pseudo-distance $d_\frakK$. 
	In particular, the Tamarkin category $\cD(X)$ is complete with respect to the pseudo-distance $d_{\cD(X)}$.
\end{corollary}

\begin{proof}
	For the latter claim, it suffices to show that for a Cauchy sequence $(F_n)_{n \in \bN}$ in the Tamarkin category $\cD(X)$, the limit object $F_\infty$ constructed in \cref{theorem:limit_thickening} is also in $\cD(X)$.
	By construction and \cref{lem:shift}, after taking a subsequence, we have $F_\infty = \hocolim_m T_{-a_{\ge n}} F_n$, where $(a_{\ge n})_{n \in \bN}$ is as in \cref{theorem:limit_thickening}.
	Set $G_n \coloneqq T_{-a_{\ge n}} F_n$.
	Since the functor $\bfk_{X \times [0,\infty)} \star (\ast)$ is defined as the composite of left adjoint functors, it commutes with direct sums. 
	Since each $G_n$ is an object of the left orthogonal ${}^{\bot} \SD_{\{\tau \le 0\}}(\bfk_{X \times \bR_t})=\cD(X)$, in the following commutative diagram the first and the second vertical arrows are isomorphisms:
	\begin{equation}
    \begin{aligned}
		\xymatrix{
			\bfk_{X \times [0,\infty)} \star \bigoplus_{n} G_n \ar[r] \ar[d]^-{\rotatebox{90}{$\sim$}} & \bfk_{X \times [0,\infty)} \star \bigoplus_{n} G_n \ar[r] \ar[d]^-{\rotatebox{90}{$\sim$}} & \bfk_{X \times [0,\infty)} \star F_\infty \ar[r]^-{+1} \ar[d] & \\
			\bigoplus_{n} G_n \ar[r] & \bigoplus_{n} G_n \ar[r] & F_\infty \ar[r]^-{+1} &.     
		}
    \end{aligned}
	\end{equation}
	Thus, we have an isomorphism $\bfk_{X \times [0,\infty)} \star F_\infty \simto F_\infty$ and find that $F_\infty \in \cD(X)$.
\end{proof}

\begin{remark}
	As mentioned in \cref{remark:distances}, $d_\frakK$ and $\dist_\frakK$ are equivalent. 
	Hence $\SD(\bfk_X)$ is also complete with respect to $\dist_\frakK$.
\end{remark}

\section{Sheaf quantization of Hamiltonian diffeomorphisms and homeomorphisms}\label{section:SQ_diffeo_homeo}

In this section, we give a refined Hamiltonian stability result, which state the distance between sheaf quantizations of Hamiltonian diffeomorphisms is at most the Hofer distance.  
By using the stability result, we also construct a sheaf quantization of a Hamiltonian homeomorphism.

\subsection{Hamiltonian stability theorem}

In this subsection, we give a generalization of our previous result \cite[Thm.~4.16]{AI20}.

For a timewise compactly supported function $H \colon T^*M \times I \to \bR$, we define 
\begin{equation}
	\begin{aligned}
		E_+(H)
		& \coloneqq 
		\int_0^1 \max_{p \in T^*M} H_s(p) ds, 
		\qquad 
		E_-(H)
		\coloneqq 
		-\int_0^1 \min_{p \in T^*M} H_s(p) ds, \\
		\| H \|_{\mathrm{osc}}
		& \coloneqq 
		E_+(H)+E_-(H)
		=
		\int_0^1 \left(\max_{p \in T^*M} H_s(p) - \min_{p \in T^*M} H_s(p) \right) ds.
	\end{aligned}
\end{equation}
When $M=\pt$, we need to modify $E_+(H)$ and $E_-(H)$ so that they are non-negative.

\begin{theorem}\label{theorem:SQ_inequality}
	Let $H \colon T^*M \times I \to \bR$ be a timewise compactly supported function. 
	Then $(\cK^0_1,\cK^H_1)$ is $(E_-(H)+\varepsilon, E_+(H)+\varepsilon)$-isomorphic for any $\varepsilon \in \bR_{>0}$, where $0$ denotes the zero function on $T^*M \times I$.
	In particular, $d_{\cD(M^2)}(\cK^0_1,\cK^H_1) \le \|H\|_{\mathrm{osc}}$.
\end{theorem}

\begin{proof}
	Let $\varepsilon>0$ be an arbitrary positive number.
	We apply \cref{proposition:abisomhtpy} to $\cK^H$.
	Then by \eqref{eq:estimate_LambdaH}, we find that $(\cK^0_1,\cK^H_1)=(\cK^H_0,\cK^H_1)$ is weakly $(E_-(H)+\varepsilon, E_+(H)+\varepsilon)$-isomorphic.
	It is enough to show that $(\cK^0_1,\cK^H_1)$ is $(E_-(H)+\varepsilon, E_+(H)+\varepsilon)$-isomorphic. 
	
	We set $a \coloneqq E_-(H)+\varepsilon$ and $b \coloneqq E_+(H)+\varepsilon$.
	Then, by definition, there exist morphisms $\alpha, \delta \colon \cK^0_1 \to T_a \cK^H_1$ and $\beta, \gamma \colon \cK^H_1 \to T_b \cK^0_1$ such that
	\begin{enumerate}
		\renewcommand{\labelenumi}{$\mathrm{(\arabic{enumi})}$}
		\item $\cK^0_1 \xrightarrow{\alpha} {T_a} \cK^H_1 \xrightarrow{{T_a} \beta} {T_{a+b}} \cK^0_1$ is equal to $\tau_{0,a+b}(\cK^0_1) \colon \cK^0_1 \to {T_{a+b}} \cK^0_1$,
		\item $\cK^H_1 \xrightarrow{\gamma} {T_b} \cK^0_1 \xrightarrow{{T_b} \delta} {T_{a+b}} \cK^H_1$ is equal to $\tau_{0,a+b}(\cK^H_1) \colon \cK^H_1 \to {T_{a+b}} \cK^H_1$, and
		\item $\tau_{a,2a}(\cK^H_1) \circ \alpha=\tau_{a,2a}(\cK^H_1) \circ \delta$ and $\tau_{b,2b}(\cK^0_1) \circ \beta=\tau_{b,2b}(\cK^0_1) \circ \gamma$.
	\end{enumerate}
	
	Now we let $\Tor$ be the full triangulated subcategory of $\cD(M^2)$ consisting of torsion objects $\{F \mid d_{\cD(M^2)}(F, 0)<\infty\}$. 
	Then, by \cite[Prop.~6.7]{GS14}, the Hom set of the localized category $\cD(M^2)/\Tor$ is computed as 
	\begin{equation}
		\Hom_{\cD(M^2)/\Tor}(F,G) \simeq \varinjlim_{c\to \infty}\Hom_{\cD(M^2)}(F,T_cG). 
	\end{equation}
	For the objects $\cK^0_1, \cK^H_1$ and $d \in \bR$, we have 
	\begin{equation}
		\Hom_{\cD(M^2)}(\cK^H_1,T_d \cK^H_1) \simeq 
		\Hom_{\cD(M^2)}(\cK^0_1,T_d \cK^0_1) \simeq 
		\begin{cases}
			\bfk & (d \ge 0) \\
			0 & (d <0).
		\end{cases}
	\end{equation}
	Hence, $\Hom_{\cD(M^2)/\Tor}(\cK^0_1,\cK^0_1) \simeq \Hom_{\cD(M^2)/\Tor}(\cK^H_1,\cK^H_1) \simeq \bfk$ and the canonical morphism
	\begin{equation}
		\Hom_{\cD(M^2)}(\cK^H_1,T_d \cK^H_1) \to \Hom_{\cD(M^2)/\Tor}(\cK^H_1,\cK^H_1), \alpha' \mapsto \overline{\alpha'}
	\end{equation}
	is injective for $d \ge 0$. 
	
	By the condition~(3), we have 
	\begin{align}
		\overline{\alpha}=\overline{\delta} & \in \Hom_{\cD(M^2)/\Tor}(\cK^0_1,\cK^H_1) \\
		\overline{\beta}=\overline{\gamma} & \in \Hom_{\cD(M^2)/\Tor}(\cK^H_1,\cK^0_1).
	\end{align}
	Hence, through the isomorphism $\Hom_{\cD(M^2)/\Tor}(\cK^H_1,\cK^H_1) \simeq \bfk$, we get
	\begin{equation}
		1 = \overline{\delta} \circ \overline{\gamma} = \overline{\alpha} \circ \overline{\beta} \in \Hom_{\cD(M^2)/\Tor}(\cK^H_1,\cK^H_1),
	\end{equation}
	by the condition~(2).
	Since $T_b \alpha \circ \beta \in \Hom_{\cD(M^2)}(\cK^H_1,T_{a+b} \cK^H_1)$ is sent to $\overline{\alpha} \circ \overline{\beta}$ and $\tau_{0,a+b}(\cK^H_1)$ is sent to $1$, by the injectivity we obtain $T_b \alpha \circ \beta=\tau_{0,a+b}(\cK^H_1)$. 
	Thus, combining this with the condition~(1), we find that the pair $(\alpha,\beta)$ gives an $(a,b)$-isomorphism for the pair $(\cK^0_1,\cK^H_1)$, which completes the proof.
\end{proof}

For two timewise compactly supported functions $H,H' \colon T^*M \times I \to \bR$, by \eqref{eqn:bullet_distance} and \eqref{equation:SQ_composition}, we have
\begin{equation}\label{equation:inequality_SQ}
	d_{\cD(M^2)}(\cK^H_1,\cK^{H'}_1)
	= 
	d_{\cD(M^2)}(\cK^0_1, \cK^{\overline{H} \sharp H'}_1)
	\le \| H -H'\|_{\mathrm{osc}}.
\end{equation}

\subsection{Sheaf quantization of Hamiltonian diffeomorphisms}

In this subsection, we investigate sheaf quantization of Hamiltonian diffeomorphisms.
We keep the symbols $M$ for a connected manifold without boundary and $I$ for an open interval containing $[0,1]$. 
First, we prove the following, an analogue of \cref{theorem:SQ_inequality} in the derived category $\SD(\bfk_{M^2 \times \bR_t})$ not in the Tamarkin category $\cD(M^2)$.

\begin{proposition}\label{theorem:energy_bound_derived}
	Let $H \colon T^*M \times I \to \bR$ be a timewise compactly supported function. 
	Then, one has an inequality 
	\begin{equation}
		d_{M^2 \times \bR_t}(K^0_1,K^H_1) \le 4 \int_0^1 \|H_s\|_{\infty} ds \le 4 \|H\|_{\mathrm{osc}}.
	\end{equation}
\end{proposition}

Similarly to \eqref{equation:inequality_SQ}, for two timewise compactly supported functions $H,H' \colon T^*M \times I \to \bR$, we have 
\begin{equation}\label{equation:inequality_SQ_derived}
	d_{M^2 \times \bR_t}(K^{H}_1,K^{H'}_1) \le 4 \|H-H'\|_{\mathrm{osc}}.
\end{equation}

To prove the proposition, we prepare some lemmas.
Let $X$ be a manifold and let $q \colon X \times \bR_t \to \bR_t$ denote the projection.

\begin{lemma}\label{lem:conv-hom-MS-const}
	Let $F \in \SD_{\{ \tau \le 0 \}}(\bfk_{X \times \bR_t})$ and $G \in \SD_{\{ \tau \ge 0 \}}(\bfk_{X \times \bR_t})$.
	Then there exist $H, H' \in \SD(\bfk_X)$ such that $F \star G \simeq H \boxtimes \bfk_{\bR_t}$ and $\cHom^\star(F,G) \simeq H' \boxtimes \bfk_{\bR_t}$.
\end{lemma}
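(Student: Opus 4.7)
By Lemma~\ref{lem:conv-hom-MS} applied with the closed cones $A = (-\infty, 0]$ and $B = [0, \infty)$, the microsupport hypotheses on $F$ and $G$ give
\begin{equation*}
    \MS(F \star G), \ \MS(\cHom^\star(F, G)) \subset T^*X \times \bR_t \times (A \cap B) = T^*X \times \bR_t \times \{0\}.
\end{equation*}
This reduces the lemma to the following general claim, applied to each of $K = F \star G$ and $K = \cHom^\star(F, G)$: whenever $K \in \SD(\bfk_{X \times \bR_t})$ satisfies $\MS(K) \subset T^*X \times \bR_t \times \{0\}$, then $K \simeq p_X^{-1} H$ for some $H \in \SD(\bfk_X)$, where $p_X \colon X \times \bR_t \to X$ is the projection.

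The candidate for $H$ is $i_0^{-1} K$ where $i_0 \colon X \hookrightarrow X \times \bR_t$, $x \mapsto (x, 0)$, and I would construct the isomorphism $p_X^{-1} H \simto K$ as the counit of the adjunction $(p_X^{-1}, \rR p_{X*})$ after identifying $\rR p_{X*} K \simeq H$. The key tool is the microlocal Morse lemma (Proposition~\ref{proposition:microlocalmorse}) applied to $\varphi(x, t) = \pm t$: compactness of $X$ makes $\varphi$ proper on $\Supp(K)$, while the hypothesis $\tau = 0$ on $\MS(K)$ ensures $d\varphi \notin \MS(K)$. One then deduces that $\RG(X \times I; K)$ is independent of the open interval $I \subset \bR_t$, and in particular that the natural map $\rR p_{X*} K \to i_0^{-1} K$ is an isomorphism.

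The main obstacle is to upgrade this from a global statement to a sheaf-level isomorphism $p_X^{-1} H \simto K$. Letting $C$ denote the cone of the counit morphism, one has $\MS(C) \subset T^*X \times \bR_t \times \{0\}$ as well, together with $\rR p_{X*} C \simeq 0$. To conclude $C \simeq 0$ as a sheaf (and not only on global sections), I would either (a) invoke the classical equivalence between $\SD(\bfk_X)$ and the full subcategory of $\SD(\bfk_{X \times \bR_t})$ consisting of sheaves whose microsupport lies in $T^*X \times 0_{\bR_t}$, which holds because the fibers of $p_X$ are simply connected and contractible (see, e.g., \cite{KS90}), or (b) refine the Morse analysis by restricting to relatively compact opens $\overline{U} \times \bR_t$ to propagate the vanishing of $C$ locally on $X$. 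The proof for $\cHom^\star(F, G)$ is identical since its microsupport satisfies the same estimate.
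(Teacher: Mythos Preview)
Your proposal is correct and takes the same route as the paper: apply Lemma~\ref{lem:conv-hom-MS} to obtain $\MS(F \star G), \MS(\cHom^\star(F,G)) \subset \{\tau=0\}$, and then conclude that such an object is of the form $H \boxtimes \bfk_{\bR_t}$. The paper's own proof stops at ``which imply the result,'' treating the second step as standard; you have written out a sketch of that step (via the microlocal Morse lemma and the equivalence $p_X^{-1} \colon \SD(\bfk_X) \simto \SD_{T^*X \times 0_{\bR_t}}(\bfk_{X \times \bR_t})$ for contractible fibers), which is a perfectly good justification and indeed the content behind the paper's one-line assertion.
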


\begin{proof}
	By \cref{lem:conv-hom-MS}, we find that $\MS(F \star G) \subset \{ \tau =0\}$ and $\MS(\cHom^\star(F,G)) \subset \{ \tau=0\}$, which imply the result.
\end{proof}

For $a,b \in \bR_{\ge 0}$, we set 
\begin{equation}
	D(a,b)\coloneqq 
	\bigcup_{-a \le c \le b} \{ (\tau,\sigma) \in \bR^2 \mid \sigma = c \cdot \tau \}.
\end{equation}
Note that $D(a,b)$ is a closed cone in $\bR^2$, which is not necessarily convex. 
See \cref{figure:Dab}.
We set $D^+(a,b)\coloneqq D(a,b) \cap \{(\tau,\sigma) \mid \tau \ge 0 \}$ and  $D^-(a,b)\coloneqq D(a,b) \cap \{(\tau,\sigma) \mid \tau \le 0 \}$.
\begin{figure}[H]
	\begin{center}
		\begin{tikzpicture}
			\draw [gray, fill=lightgray] (-3,-1.5) -- (-3,1) -- (0,0);
			\draw [gray, fill=lightgray] (3,1.5) -- (3,-1) -- (0,0);
			\draw [very thick] (-3,-1.5)--(3,1.5);
			\draw [very thick] (-3,1)--(3,-1);
			\node at (0.2,0.35){$O$};
			\draw [->] (0,-1.6) -- (0,1.6) node[left] {$\sigma$};
			\draw [->] (-3.5,0) -- (3.5,0) node[below] {$\tau$};
			\node at (3,1.5) [above] {$\sigma=b \cdot \tau$};
			\node at (3,-1) [below] {$\sigma=-a \cdot \tau$};
		\end{tikzpicture}
		\caption{$D(a,b)$}\label{figure:Dab}
	\end{center}
\end{figure}

\begin{lemma}\label{lem:maxab}
	Let $F \to G \to H \toone$ be an exact triangle in $\SD(\bfk_{X \times \bR_t})$
	and $a,b \in \bR_{\ge 0}$.
	Assume 
	\begin{enumerate}
		\renewcommand{\labelenumi}{$\mathrm{(\arabic{enumi})}$}
		\item $F \in \SD_{\{ \tau \le 0\}}(\bfk_{X \times \bR_t})$ and $F$ is $a$-torsion,
		\item $G \in \SD_{\{ \tau \ge 0\}}(\bfk_{X \times \bR_t})$ and $G$ is $b$-torsion.
	\end{enumerate}
	Then, $\RHom(F,G) \simeq 0$.
	In particular, $H \simeq G \oplus F[1]$ is $\max(a,b)$-torsion.
\end{lemma}

\begin{proof}
	By \cref{lem:conv-hom-MS-const}, there exists $H' \in \SD(\bfk_X)$ such that $\cHom^\star(F,G) \simeq H' \boxtimes \bfk_{\bR_t}$.
	Moreover by the isomorphism ${T_b} \cHom^\star(F,G) \simeq \cHom^\star(F,{T_b} G)$, we find that $\cHom^\star(F,G)$ is $b$-torsion.
	Hence we have $\cHom^\star(F,G) \simeq 0$ and  
	\begin{equation}\label{eq:hom-star}
		\begin{aligned}
			\RHom(F,G) 
			& \simeq 
			\RHom(\bfk_{X \times \{0\}} \star F, G) \\
			& \simeq 
			\RHom(\bfk_{X \times \{0\}}, \cHom^\star(F,G)) 
			\simeq 0,
		\end{aligned}
	\end{equation}
	which proves the result.
\end{proof}

Next, we give a microlocal cut-off result, which we use to reduce the problem to \cref{proposition:abisomhtpy}.

\begin{lemma}\label{lem:cutoff}
	Let $\cH \in \SD(\bfk_{X \times \bR_t \times \bR})$ and assume that there exist $a,b \in \bR_{\ge 0}$ such that 
	\begin{equation}
		\MS(\cH) \subset T^*X \times (\bR_t \times \bR) \times D(a,b).
	\end{equation}
	Then there exists an exact triangle $\cH^- \to \cH^+ \to \cH \toone$ in $\SD(\bfk_{X \times \bR_t \times \bR})$ such that $\MS(\cH^-) \subset T^*X \times (\bR_t \times \bR) \times D^-(a,b)$ and $\MS(\cH^+) \subset T^*X \times (\bR_t \times \bR) \times D^+(a,b)$.
\end{lemma}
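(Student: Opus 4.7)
The plan is to realize the cut-off triangle by convolving $\cH$ with the constant sheaf on a closed convex proper cone that is dual to a slight enlargement of $D^+(a,b)$.

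Fix an auxiliary parameter $\varepsilon>0$ and set
\[
    \gamma \coloneqq \bigl\{(t,s) \in \bR_t \times \bR \;\big|\; t \ge (a+\varepsilon)s \text{ and } t \ge -(b+\varepsilon)s \bigr\}.
\]
This is a closed convex proper cone with non-empty interior, and a direct computation of the polar cone shows
\[
    \gamma^{\circ,a} = \bigl\{(\tau,\sigma) \;\big|\; \tau \ge 0,\ -(a+\varepsilon)\tau \le \sigma \le (b+\varepsilon)\tau\bigr\}.
\]
The key feature of this choice is that $D^+(a,b) \setminus \{0\} \subset \Int(\gamma^{\circ,a})$, while $D^-(a,b) \cap \Int(\gamma^{\circ,a}) = \emptyset$ (indeed $\Int(\gamma^{\circ,a}) \subset \{\tau>0\}$).

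I define $\cH^+ \coloneqq \bfk_{X \times \gamma} \star \cH$, where the convolution is taken in the $\bR_t \times \bR$-direction. The restriction morphism $\bfk_\gamma \to \bfk_{\{0\}}$ coming from the closed inclusion $\{0\} \hookrightarrow \gamma$ induces a morphism $\cH^+ \to \cH$ (using $\bfk_{\{0\}} \star \cH \simeq \cH$), and I set $\cH^- \coloneqq \fib(\cH^+ \to \cH)$. The natural excision triangle $\bfk_{\gamma \setminus \{0\}} \to \bfk_\gamma \to \bfk_{\{0\}} \toone$ convolved with $\cH$ produces the desired triangle $\cH^- \to \cH^+ \to \cH \toone$ and identifies $\cH^- \simeq \bfk_{X \times (\gamma \setminus \{0\})} \star \cH$.

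For the microsupport of $\cH^+$, the standard convolution estimate together with $\MS(\bfk_\gamma) \subset (\bR_t \times \bR) \times \gamma^{\circ,a}$ yields
\[
    \MS(\cH^+) \subset T^*X \times (\bR_t \times \bR) \times (\gamma^{\circ,a} \cap D(a,b)),
\]
and the intersection equals $D^+(a,b)$ by the properties of $\gamma^{\circ,a}$ above. For $\cH^-$, the goal is to show $\pi_{V^*}(\MS(\bfk_{\gamma \setminus \{0\}})) \subset V^* \setminus \Int(\gamma^{\circ,a})$; convolving with $\cH$ will then restrict the covector fibres to $(V^* \setminus \Int(\gamma^{\circ,a})) \cap D(a,b) = D^-(a,b)$, exactly as required.

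The main technical obstacle is the microsupport analysis of $\bfk_{\gamma \setminus \{0\}}$ at the origin. Away from $0$ this is routine: $\bfk_{\gamma \setminus \{0\}}$ is locally constant on $\Int(\gamma)$ and has conormals to $\partial\gamma$ along $\partial\gamma \setminus \{0\}$, both of which project into $\gamma^{\circ,a} \setminus \Int(\gamma^{\circ,a})$. At the origin I use the defining triangle and the characterisation of microsupport via local cohomology: for $\xi \in \Int(\gamma^{\circ,a})$ and $\varphi$ linear with $d\varphi(0) = \xi$, one has $\gamma \subset \{\varphi \ge 0\}$ locally near $0$, so $\rR\Gamma_{\{\varphi \ge 0\}}(\bfk_\gamma)_0 \simeq \bfk \simeq \rR\Gamma_{\{\varphi \ge 0\}}(\bfk_{\{0\}})_0$ with the canonical map an isomorphism, forcing $\rR\Gamma_{\{\varphi \ge 0\}}(\bfk_{\gamma \setminus \{0\}})_0 \simeq 0$. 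This shows $\xi \notin \MS(\bfk_{\gamma \setminus \{0\}})_0$, completing the microsupport estimate and hence the proof.
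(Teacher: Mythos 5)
Your proof is correct and rests on the same mechanism the paper uses: convolve $\cH$ with $\bfk$ on a closed convex cone $\gamma \subset \bR_t \times \bR$, so that the excision triangle $\bfk_{\gamma \setminus \{0\}} \to \bfk_\gamma \to \bfk_{\{0\}} \toone$ becomes the desired triangle after convolution, and then combine the microlocal cutoff lemma with the convolution microsupport estimate. The only real difference is the choice of cone. The paper takes the degenerate ray $\lambda = \{(t,s) \mid t \ge 0,\ s = 0\}$, whose polar is the closed half-plane $\{\tau \ge 0\}$; this cuts $D(a,b)$ exactly into $D^+(a,b)$ and $D^-(a,b)$, and since $\Int(\lambda^{\circ}) = \{\tau > 0\}$ already contains $D^+(a,b) \setminus \{0\}$ and misses $D^-(a,b)$, no auxiliary parameter is needed. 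You instead take a two-dimensional cone whose polar is a strict $\varepsilon$-enlargement of $D^+(a,b)$. That also works, but the $\varepsilon$ is not cosmetic: with $\varepsilon = 0$ one would have $\gamma^{\circ,a} = D^+(a,b)$, the boundary rays $\sigma = b\tau$ and $\sigma = -a\tau$ (with $\tau > 0$) would survive in $D(a,b) \setminus \Int(\gamma^{\circ,a})$, and the microsupport estimate for $\cH^-$ would land in a set strictly larger than $D^-(a,b)$. Your observation that $D^+(a,b) \setminus \{0\} \subset \Int(\gamma^{\circ,a})$ is exactly what repairs this, and it is the property the paper obtains for free from the half-plane. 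Your final paragraph computing $\MS(\bfk_{\gamma \setminus \{0\}})$ at the origin by local cohomology is essentially a from-scratch derivation of the standard fact that $\bfk_\gamma \star F \to F$ is an isomorphism over $\Int(\gamma^{\circ,a})$, which the paper simply invokes. Both proofs also rely, without spelling it out, on the two-dimensional analogue of \cref{lem:conv-hom-MS} to keep the covector component of the microsupport of the convolved objects inside $D(a,b)$; this does not follow from the basic cutoff bound $\MS(\bfk_\gamma \star \cH) \subset \gamma^{\circ,a}$ alone, so it is worth flagging as the implicit estimate underlying both arguments.
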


\begin{proof}
	Let $\lambda \coloneqq \{(t,s) \mid t \ge 0, s=0\}$.
	Then we get an exact triangle 
	\begin{equation}
		\bfk_{X \times (\lambda \setminus \{0\})} \star \cH
		\lto \bfk_{X \times \lambda} \star \cH 
		\lto \cH \toone
	\end{equation}
	with $\MS(\bfk_{X \times \lambda} \star \cH) \subset T^*X \times (\bR_t \times \bR) \times D(a,b) \cap \{(\tau,\sigma) \mid \tau \ge 0 \}$.
	Moreover $\bfk_{X \times \lambda} \star F \to F$ is an isomorphism on $T^*X \times (\bR_t \times \bR) \times \Int(\lambda^\circ)$, where $\lambda^\circ$ denotes the polar cone of $\lambda$. 
	Thus, we conclude that $\MS(\bfk_{\lambda \setminus \{0\}} \star \cH) \subset T^*X \times (\bR_t \times \bR) \times (D(a,b) \setminus \Int(\lambda^\circ)) = T^*X \times (\bR_t \times \bR) \times D^-(a,b)$.
\end{proof}

The following is a variant of \cite[Prop.~4.3]{AI20}.

\begin{proposition}\label{prop:distance-constant-bicone}
	Let $\cH \in \SD(\bfk_{X \times \bR_t \times I})$ and $s_1 < s_2 \in I$.
	Assume that there exist $a,b \in \bR_{\ge 0}$ and $r \in \bR_{>0}$ such that 
	\begin{equation}
		\MS(\cH) \cap \pi^{-1}(X \times \bR_t \times (s_1-r,s_2+r)) 
		\subset T^*X \times (\bR_t \times I) \times D(a,b).
	\end{equation}
	\begin{enumerate}
		\item Let $q \colon X \times \bR_t \times I \to X \times \bR_t$ be the projection.
		Then $\rR q_*\cH_{X \times \bR_t \times (s_1,s_2]}$ and \linebreak $\rR q_*\cH_{X \times \bR_t \times [s_1,s_2)}$ are $(\max(a,b)(s_2-s_1)+\varepsilon)$-torsion for any $\varepsilon \in \bR_{>0}$.
		\item One has $d_{X \times \bR_t}(\cH|_{X \times \bR_t \times \{s_1\}},\cH|_{X \times \bR_t \times \{s_2\}}) \le 4 \max(a,b)(s_2-s_1)$.
	\end{enumerate}    
\end{proposition}

\begin{proof}
	(i)
	Choose a diffeomorphism $\varphi \colon (s_1-r,s_2+r) \simto \bR$ satisfying 
	$\varphi|_{[s_1,s_2]}= \id_{[s_1,s_2]}$ and $d\varphi(s) \ge 1$ for any $s \in (s_1-r,s_2+r)$.
	Set $\Phi \coloneqq \id_{X \times \bR_t} \times \varphi \colon X \times \bR_t \times (s_1-r,s_2+r) \simto X \times \bR_t \times \bR$ and $\cH' \coloneqq \Phi_* \cH \in \SD(\bfk_{X \times \bR_t \times \bR})$.
	Then by the assumption on $\varphi$, we have 
	\begin{equation}
		\MS(\cH') \subset T^*X \times (\bR_t \times \bR) \times D(a,b)
	\end{equation}
	and $\cH'|_{X \times \bR_t \times [s_1,s_2]} \simeq \cH|_{X \times \bR_t \times [s_1,s_2]}$.
	Hence, we may assume $I=\bR$ from the beginning.
	
	Applying \cref{lem:cutoff}, we have an exact triangle $\cH^- \to \cH^+ \to \cH \toone$ in $\SD(\bfk_{X \times \bR_t \times \bR})$ with $\MS(\cH^-) \subset T^*X \times (\bR_t \times \bR) \times D^-(a,b)$ and $\MS(\cH^+) \subset T^*X \times (\bR_t \times \bR) \times D^+(a,b)$.
	By \cite[Prop.~4.3]{AI20}, $\rR q_*\cH^+_{X \times \bR_t \times (s_1,s_2]}$ is $(b(s_2-s_1)+\varepsilon)$-torsion. 
	Similarly we find that $\rR q_*\cH^-_{X \times \bR_t \times (s_1,s_2]}$ is $(a(s_2-s_1)+\varepsilon)$-torsion.
	Here we have an exact triangle 
	\begin{equation}
		\rR q_*\cH^-_{X \times \bR_t \times (s_1,s_2]} \lto \rR q_*\cH^+_{X \times \bR_t \times (s_1,s_2]} \lto \rR q_*\cH_{X \times \bR_t \times (s_1,s_2]} \toone
	\end{equation}
	with $\rR q_*\cH^+_{X \times \bR_t \times (s_1,s_2]} \in \SD_{\{ \tau \ge 0 \}}(\bfk_{X \times \bR_t})$ and $\rR q_*\cH^-_{X \times \bR_t \times (s_1,s_2]} \in \SD_{\{ \tau \le 0 \}}(\bfk_{X \times \bR_t})$.
	Hence, by \cref{lem:maxab} we find that $\rR q_*\cH_{X \times \bR_t \times (s_1,s_2]}$ is  $(\max(a,b)(s_2-s_1)+\varepsilon)$-torsion.
	The proof for the other case is similar.
	
	\noindent (ii) 
	We have the following two exact triangles 
	\begin{align}
		& \rR q_*\cH_{X \times \bR_t \times (s_1,s_2]} \lto 
		\rR q_* \cH_{X \times \bR_t \times [s_1,s_2]} \lto 
		\cH|_{X \times \bR_t \times \{s_1\}} \toone, \\
		& \rR q_*\cH_{X \times \bR_t \times [s_1,s_2)} \lto 
		\rR q_* \cH_{X \times \bR_t \times [s_1,s_2]} \lto 
		\cH|_{X \times \bR_t \times \{s_2\}} \toone. 
	\end{align}
	Hence, by the result of (i) and \cref{lemma:torsion_to_weakisom}, the two pairs $(\rR q_* \cH_{X \times \bR_t \times [s_1,s_2]}, \cH|_{X \times \bR_t \times \{s_1\}})$ and $(\rR q_* \cH_{X \times \bR_t \times [s_1,s_2]}, \cH|_{X \times \bR_t \times \{s_2\}})$ are weakly $(0,(\max(a,b)(s_2-s_1)+\varepsilon))$-isomorphic. 
	Hence, the result follows from the triangle inequality.
\end{proof}

The following proposition is a variant of \cref{proposition:abisomhtpy}.

\begin{proposition}\label{thm:distance}
	Let $I$ be an open interval containing the closed interval $[0,1]$ and
	$\cH \in \SD(\bfk_{X \times \bR_t \times I})$.
	Assume that there exist continuous functions $f, g \colon I \to \bR_{\ge 0}$ satisfying
	\begin{equation}
		\MS(\cH) \subset T^*X \times \{(t,s;\tau,\sigma) \mid (\tau,\sigma) \in D(f(s),g(s)) \}.
	\end{equation}
	Then $d_{X \times \bR_t}(\cH|_{X \times \bR_t \times \{0\}},\cH|_{X \times \bR_t \times \{1\}}) \le 4 \int_{0}^{1} \max(f,g)(s) ds$.
\end{proposition}

\begin{proof}
	We can apply an argument similar to \cite[Prop.~4.15]{AI20}. 
	We only need to replace \cite[Prop.~4.3]{AI20} with \cref{prop:distance-constant-bicone}.
\end{proof}

\begin{proof}[Proof of \cref{theorem:energy_bound_derived}]
	The result follows from \eqref{eq:estimate_LambdaH} and \cref{thm:distance}.
\end{proof}

\begin{remark}
	One could prove an inequality 
	\begin{equation}
		d_{M^2 \times \bR_t}(K^0_1,K^H_1) \le 2 \int_0^1 \|H_s\|_{\infty} ds,
	\end{equation}
	which is stronger than \cref{theorem:energy_bound_derived}. 
	Indeed, by the proof of \cref{prop:distance-constant-bicone}, we have proved that under the assumption of \cref{thm:distance} $(\cH|_{X \times \bR_t \times \{0\}},\cH|_{X \times \bR_t \times \{1\}})$ is weakly $(\int_{0}^{1} \max(f,g)(s) ds+\varepsilon,\int_{0}^{1} \max(f,g)(s) ds+\varepsilon)$-isomorphic for any $\varepsilon \in \bR_{>0}$. 
	Hence, we find that $(K^0_1,K^H_1)$ is weakly $(\int_0^1 \|H_s\|_{\infty} ds+\varepsilon, \int_0^1 \|H_s\|_{\infty} ds+\varepsilon)$-isomorphic for any $\varepsilon \in \bR_{>0}$. 
	It remains to apply an argument similar to the proof of \cref{theorem:SQ_inequality}. 
	However, we do not need this stronger inequality in this paper. 
\end{remark}

It is proved in \cite[Prop.~4.3]{Zhang20} that the restriction of the sheaf quantization $K^H$ to $s=1$ depends only on the relative homotopy class of the path $[s \mapsto \phi^H_s]$.
Now we prove the following stronger result, which claims that the restriction depends only on the time-1 map.

\begin{proposition}\label{proposition:dependence_time_one}
	Let $H \colon T^*M \times I \to \bR$ be a timewise compactly supported function.
	Then the objects $\tl{K}^H_1,K^H_1,\cK^H_1$ are determined by the time-1 map $\phi^H_1$. 
\end{proposition}

\begin{proof}
	We shall prove $K^H_1=K^{H'}_1$ assuming that $\phi^{H'}_1=\phi^H_1$. 
	By \eqref{equation:SQ_composition}, it suffices to show that $K^{\overline{H} \sharp H'}_1\simeq \bfk_{\Delta_M\times \{0\}}$. 
	Hence, we may assume that $H'\equiv 0$ and $\phi^H_1=\id_{T^*M}$. 
	
	Since $u_s\equiv 0$ by \cref{lemma:properties_hamiltonian}, we find that $\rMS(K^H_1)=\rT_{\Delta_M\times \{0\}}(M^2 \times \bR_t)$ and $K^H_1$ is simple along the subset. 
	Moreover, since $K^H_0\simeq \bfk_{\Delta_M\times \{0\}}$ and $u$ is compactly supported and hence bounded, $K^H|_{M^2 \times \{ R\}\times I}$ and $K^H|_{M^2 \times \{ -R\}\times I}$ are $0$ for sufficiently large $R$. 
	Hence, there exists a rank one local system $\cL$ on $M$ such that $K^H_1 \simeq {\delta_M}_*\cL \boxtimes \bfk_{\{0\}}[m]$, where $m$ is some integer. 
	By \cref{theorem:energy_bound_derived}, we have 
	\begin{equation}
		d_{M^2 \times \bR_t}(\bfk_{\Delta_M\times \{0\}}, {\delta_M}_*\cL \boxtimes \bfk_{\{0\}}[m])=  d_{M^2 \times \bR_t}(K^0_1,K^H_1) \le 4 \|H\|_{\mathrm{osc}}<\infty.
	\end{equation}
	By restricting to $\{(x,x)\} \times \bR \subset M^2 \times \bR_t$ for some $x \in M$, we obtain $d_{\bR_t}(\bfk_{\{0\}}, \bfk_{\{0\}}[m]) < \infty$ and find that $m=0$.
	Then, we have 
	\begin{equation}
		\RG(M;\cL) 
		\simeq \RG(M^2 \times \bR_t;K^H_1) 
		\simeq \RG(M^2 \times \bR_t;K^0_1)
		\simeq \RG(M;\bfk_M).
	\end{equation}
	In particular, $H^0(M;\cL) \simeq H^0(M;\bfk_M)$, which implies that $\cL$ is trivial.  
\end{proof}

The proposition above shows the well-definedness in the following definition. 

\begin{definition}
	\begin{enumerate}
		\item A diffeomorphism $\varphi \colon T^*M \to T^*M$ is said to be a compactly supported \emph{Hamiltonian diffeomorphism} if it is the time-1 map of some compactly supported Hamiltonian isotopy $\phi^H$, that is $\varphi=\phi^H_1$. 
		The set of compactly supported Hamiltonian diffeomorphisms is denoted by $\Ham_c(T^*M,\omega)$.
		\item The \emph{Hofer metric} between Hamiltonian diffeomorphisms is defined by
		\begin{equation}
			d_H(\varphi,\varphi')\coloneqq \inf \left\{ \|H\|_{\mathrm{osc}} \relmid \phi^H_1=\varphi^{-1}\varphi'\right\}
		\end{equation}
		for $\varphi,\varphi'\in  \Ham_c(T^*M, \omega)$. 
		\item For $\varphi \in \Ham_c(T^*M, \omega)$, one sets $\tl{K}^\varphi\coloneqq \tl{K}^H_1, K^\varphi\coloneqq K^H_1$, and $\cK^\varphi\coloneqq \cK^H_1$, where $H$ is any timewise compactly supported function with $\varphi=\phi^H_1$.  
	\end{enumerate}
\end{definition}

By \eqref{equation:inequality_SQ} and \eqref{equation:inequality_SQ_derived}, we have the following.

\begin{theorem}\label{theorem:stability_hofer}
	For $\varphi, \varphi' \in \Ham_c(T^*M,\omega)$, one has 
	\begin{equation}
		\begin{aligned}
			d_{\cD(M^2)}(\cK^\varphi,\cK^{\varphi'}) & \le d_H(\varphi,\varphi'), \\
			d_{M^2 \times \bR_t}(K^\varphi, K^{\varphi'}) & \le 4 d_H(\varphi, \varphi').
		\end{aligned}
	\end{equation}
\end{theorem}

\subsection{Sheaf quantization of Hamiltonian homeomorphisms}

In this subsection, we construct a sheaf quantization of a limit of Hamiltonian diffeomorphisms with respect to the Hofer metric.

\begin{proposition}\label{prop:SQ_limit_derived}
	Let $(\varphi_n)_{n \in \bN} \subset \Ham_c(T^*M,\omega)$ be a sequence of compactly supported Hamiltonian diffeomorphisms and $K_n \coloneqq K^{\varphi_n} \in \SD(\bfk_{M^2 \times \bR_t})$ the sheaf quantization of $\varphi_n$.
	Assume that it is a Cauchy sequence with respect to the Hofer metric $d_H$.
	Then there exists an object $K_\infty \in \SD(\bfk_{M^2 \times \bR_t})$ such that $d_{M^2 \times \bR_t}(K_n,K_\infty) \to 0 \ (n \to \infty)$.
	Moreover, such an object is unique up to isomorphism, and the endofunctor $K_\infty \bullet (\ast)$ on $\SD(\bfk_{M \times \bR_t})$ gives an equivalence of categories. 
\end{proposition}

\begin{proof}
	By \eqref{equation:inequality_SQ_derived}, we have 
	\begin{equation}
		d_{M^2 \times \bR_t}(K_n,K_m) \le 4 d_H(\varphi_n,\varphi_m),
	\end{equation}
	which proves that $(K_n)_{n \in \bN}$ is a Cauchy sequence in $\SD(\bfk_{M^2 \times \bR_t})$ with respect to $d_{M^2 \times \bR_t}$.
	Hence, \cref{corollary:limit_Tamarkin} shows the existence of a limit object.
	
	Let $K'_\infty$ be another limit object that satisfies $d_{M^2 \times \bR_t}(K_\infty, K'_\infty)=0$. 
	There exists $\overline{K}_n\in \SD(\bfk_{M^2 \times \bR_t})$ such that $\overline{K}_n\bullet K_n\simeq K_n\bullet \overline{K}_n\simeq \bfk_{\Delta_M\times \{0\}}$ for each $n$. 
	Then we find that the sequence $(\overline{K}_n)_{n \in \bN}$ is also Cauchy, and we can take a limit object $\overline{K}_\infty$. 
	The Cauchy sequence $(\overline{K}_n \bullet K_n)_{n \in \bN}$ converges to both $\overline{K}_\infty \bullet K_\infty$ and $\bfk_{\Delta_M\times \{0\}}$. Hence we have $d_{M^2 \times \bR_t}(\overline{K}_\infty \bullet K_\infty, \bfk_{\Delta_M\times \{0\}})=0$. 
	Similarly, we have $d_{M^2 \times \bR_t}(K'_\infty \bullet \overline{K}_\infty, \bfk_{\Delta_M\times \{0\}})=0$. 
	By \cref{lemma:zero_distance} below, we have
	$\overline{K}_\infty \bullet K_\infty\simeq \bfk_{\Delta_M\times \{0\}}\simeq K'_\infty \bullet \overline{K}_\infty$. 
	Hence, we conclude that $K'_\infty \simeq K'_\infty \bullet \overline{K}_\infty\bullet K_\infty\simeq K_\infty$ and $\overline{K}_\infty \bullet (\ast)$ gives the inverse.
\end{proof}

Note that we have an inequality similar to \eqref{eqn:bullet_distance} for $d_{X \times \bR_t}$.

\begin{lemma}\label{lemma:zero_dist_on_R>0}
	Let $F,G\in \SD_{\{\tau \ge 0\}}(\bfk_{\bR_t})$.
	If $d_{\bR_t}(F,G)=0$, then $\rMS(F)=\rMS(G) $. 
\end{lemma}

\begin{proof}
	We prove $\rMS(F)\subset\rMS(G) $ by contradiction.
	Choose $(t_0;1)\in \rMS(F)\setminus \rMS (G)$. 
	There exists a neighborhood $(a,b)$ of $t_0$ in $\bR_t$ such that $(a,b)\cap \pi(\rMS(G))=\emptyset$. 
	There also exist $t_1,t_2 \in \bR$ such that $a<t_1<t_2<b$ and the restriction map $\RG((-\infty,t_2);F)\to \RG((-\infty,t_1);F)$ is not an isomorphism. 
	We may choose $t_1$ and $t_2$ arbitrarily close to $t_0$. 
	On the other hand, $\RG((-\infty,t'');G)\to \RG((-\infty,t');G)$ is an isomorphism for any $a<t'<t''<b$ by the microlocal Morse lemma. 
	Thus, an interleaving between $(\RG((-\infty,t);F))_{t\in \bR}$ and $(\RG((-\infty,t);G))_{t \in \bR}$ leads to a contradiction. 
\end{proof}

\begin{lemma}\label{lemma:zero_dist_on_R}
	Let $F,G\in \SD(\bfk_{\bR_t})$ and assume that $d_{\bR_t}(F,G)=0$.
	Then $\MS(F)=\MS(G)$ and hence  $\Supp(F)=\Supp(G)$. 
\end{lemma}

\begin{proof}
	Since $d_{\bR_t}(F\star \bfk_{[0,\infty)},G\star \bfk_{[0,\infty)})=0$, we have 
	\begin{equation}
		\MS(F) \cap \{\tau>0 \}=\rMS(F\star \bfk_{[0,\infty)})=\rMS(G\star \bfk_{[0,\infty)})=\MS(G)\cap \{\tau>0 \}
	\end{equation}
	by \cref{lemma:zero_dist_on_R>0}. 
	Similarly, we obtain 
	\begin{equation}
		\MS(F) \cap \{\tau<0 \}=\rMS(F\star \bfk_{(0,\infty)})=\rMS(G\star \bfk_{(0,\infty)})=\MS(G)\cap \{\tau<0 \}
	\end{equation}
	and hence $\rMS(F)=\rMS(G)$. 
	Note that $U\coloneqq \bR_t \setminus \pi(\rMS(F))=\bR_t \setminus \pi(\rMS(G))$ is an open subset of $\bR_t$, and both $F|_U$ and $G|_U$ are locally constant. 
	Thus $d_{X \times \bR_t}(F,G)=0$ implies $F|_U \simeq G|_U$. 
\end{proof}

\begin{lemma}\label{lemma:zero_dist_supp}
	For $F, G\in \SD(\bfk_{X \times \bR_t})$, $d_{X \times \bR_t}(F,G)=0$ implies $\Supp(F)=\Supp(G)$. 
\end{lemma}
\begin{proof}
	Let $(x,t) \in X \times \bR_t \setminus \Supp(G)$.
	Take an open neighborhood $x \in U$ in $X$ and $\varepsilon>0$ so that $U \times (t-\varepsilon,t+\varepsilon)\subset X \times \bR_t \setminus \Supp(G)$. 
	Note that for any $y \in X$, $0 \le d_{\bR_t}(F|_{\{y\} \times \bR_t},G|_{\{y\} \times \bR_t})\le d_{X \times \bR_t}(F,G)=0$ and 
	$\Supp (G|_{\{y\} \times \bR_t}) \subset \Supp(G) \cap (\{y\} \times \bR_t)$. 
	By \cref{lemma:zero_dist_on_R}, $F|_{\{y\}\times (t-\varepsilon,t+\varepsilon)}\simeq 0$ for any $y\in U$. 
	Hence, we obtain $F|_{U\times (t-\varepsilon,t+\varepsilon)}\simeq 0$ and $\Supp(F)\subset \Supp(G)$. 
	We obtain the converse inclusion $\Supp(F) \supset \Supp(G)$ similarly. 
\end{proof}

\begin{lemma}\label{lemma:zero_distance}
	Let $F, G \in \SD(\bfk_{X \times \bR_t})$ and assume that $d_{X \times \bR_t}(F,G)=0$ and $\Supp(G)\subset X \times \{0\}$.
	Then $F\simeq G$. 
\end{lemma}

\begin{proof}
	By \cref{lemma:zero_dist_supp}, we have $\Supp(F)=\Supp(G) \subset X \times \{0\}$. 
	We may write $F \simeq F'\boxtimes \bfk_{\{0\}}$ and $G \simeq G'\boxtimes \bfk_{\{0\}}$ with some $F',G'\in \SD(\bfk_X)$. 
	Take $\varepsilon >0$ arbitrarily. 
	Note that $\frakK_{\varepsilon}\circ F\simeq F'\boxtimes \bfk_{[-\varepsilon,\varepsilon]}$ and $\frakK_{\varepsilon}\circ G\simeq G'\boxtimes \bfk_{[-\varepsilon,\varepsilon]}$. 
	Hence, there exist morphisms $\alpha\colon F'\boxtimes \bfk_{[-\varepsilon,\varepsilon]}\to G'\boxtimes \bfk_{\{0\}}$ and $\beta\colon G'\boxtimes \bfk_{[-\varepsilon,\varepsilon]}\to F'\boxtimes \bfk_{\{0\}}$ such that $(\alpha,\beta)$ is a $\varepsilon$-isomorphism of $(F,G)$. 
	Restricting $\alpha$ and $\beta$ on $X \times \{0\}$, we obtain isomorphisms between $F'$ and $G'$. 
\end{proof}

Note that Petit--Schapira--Waas~\cite{petit2021property} proved that $\dist(F,G)=0$ if and only if $F \simeq G$ when $F$ and $G$ are constructible sheaves up to infinity on a real analytic manifold.
This result, as well as its proof, is different from ours and is not related to the Tamarkin category.

\cref{prop:SQ_limit_derived} shows that we can associate a sheaf with an element of the metric completion of $\Ham_c(T^*M,\omega)$ with respect to the Hofer metric $d_H$ as follows.
Let $(\varphi_n)_{n \in \bN}$ be a Cauchy sequence of Hamiltonian diffeomorphisms and $K_n \coloneqq K^{\varphi_n}$ for $n \in \bN$. 
By \cref{prop:SQ_limit_derived}, we obtain a limit object $K_\infty$ of the sequence $(K_n)_{n \in \bN}$.
The argument in the proof of the proposition also shows that another Cauchy sequence equivalent to $(\varphi_n)_{n \in \bN}$ gives the same limit object $K_\infty$ up to isomorphism. 

\begin{definition}\label{def:SQ_completion}
	Let $[(\varphi_n)_{n \in \bN}]$ be an element of the completion of $\Ham_c(T^*M,\omega)$ with respect to $d_H$. 
	The limit sheaf $K_\infty$ defined as above is denoted by $K^{[(\varphi_n)_{n}]}$ and called the sheaf quantization of $[(\varphi_n)_{n \in \bN}]$.
\end{definition}

We use the above construction to obtain a sheaf quantization of a Hamiltonian homeomorphism of $T^*M$. 

\begin{definition}[{Oh--M\"uller~\cite{oh2007group}}]\label{definition:hameo}
	Let $\phi=(\phi_s)_s \colon T^*M \times I \to T^*M$ be an isotopy of homeomorphisms of $T^*M$. 
	The isotopy $\phi$ is said to be a \emph{continuous Hamiltonian isotopy} if there exist a compact subset $C \subset T^*M$ and a sequence of smooth functions $H_n \colon T^*M \times I \to \bR$ timewisely supported in $C$ satisfying the following two conditions.
	\begin{itemize}
		\item[(1)] The sequence of flows $(\phi^{H_n})_{n \in \bN}$ $C^0$-converges to $\phi$, uniformly in $s \in I$.
		\item[(2)] The sequence $(H_n)_{n \in \bN}$ converges uniformly to a continuous function $H \colon T^*M \times I \to \bR$. 
		That is, $\|H_n-H\|_\infty \to 0$.
	\end{itemize}
	In this case, $H$ is said to generate $\phi$. 
	A homeomorphism of $T^*M$ is called a \emph{Hamiltonian homeomorphism} if it is the time-1 map of a continuous Hamiltonian isotopy.
\end{definition}

The following uniqueness theorems hold.
\begin{enumerate}
	\item A continuous Hamiltonian function generates a unique continuous Hamiltonian isotopy (Oh--M\"uller~\cite{oh2007group}).
	\item A continuous Hamiltonian isotopy can be generated by a unique continuous Hamiltonian function up to addition of a function of time (Viterbo~\cite{viterbo2006uniqueness}).
\end{enumerate}

A continuous Hamiltonian isotopy $\phi$ defines an element of the metric completion of $\Ham_c(T^*M,\omega)$ with respect to $d_H$. 
Indeed, for a sequence $(H_n)_{n \in \bN}$ satisfying the condition~(2) in \cref{definition:hameo}, $(\phi^{H_n}_1)_{n \in \bN}$ forms a Cauchy sequence with respect to the Hofer metric $d_H$. 
Moreover, the element in the metric completion is independent of the choice of a sequence $(H_n)_{n \in \bN}$.
Thus we obtain a sheaf $K_\infty$ as in \cref{def:SQ_completion}.
We give a bound of the microsupport of $K_\infty$. 
Set $\varphi_n=\phi^{H_n}_1$ and $K_n \coloneqq K^{\varphi_n}$. 
By construction, after taking a subsequence of $(K_n)_{n \in \bN}$ if necessary, $K_\infty \simeq \hocolim_n \bfk_{M \times [-a_{\ge n}, a_{\ge n}]} \star K_n$, where $(a_{\ge n})_{n \in \bN}$ is as in \cref{theorem:limit_thickening}.
Hence, we have  
\begin{equation}\label{equation:ms_SQ_ham_homeo}
	\begin{aligned}
		\mathring{\MS}(K_\infty)
		\subset
		\left\{ ((x';\xi'), (x;-\xi), (t;\tau)) 
		\; \middle| \; 
		\begin{aligned}
			& (x;\xi) \in T^*M, \\
			& (x';\xi'/\tau)=\varphi_\infty(x;\xi/\tau)    
		\end{aligned}
		\right\}
	\end{aligned}
\end{equation}
by the condition~(1) and \cref{lemma:ms_hocolim}.

\begin{remark}
	For the microsupport estimate above, we do not need the full convergence of flows, but only the convergence of the time-1 maps, that is, $\varphi_n \to \varphi_\infty \ (n \to \infty)$.
\end{remark}

\begin{definition}\label{definition:SQ_ham_homeo}
	Let $\phi$ be a continuous Hamiltonian isotopy associated with a continuous function $H \colon T^*M \times I \to \bR$ and $\varphi_\infty \coloneqq \phi_1$ a Hamiltonian homeomorphism. 
	The limit sheaf $K_\infty$ defined as above is denoted by $K^{H}_1=K^{\varphi_\infty}$ and called the sheaf quantization of the Hamiltonian homeomorphism $\varphi_\infty$. 
	One also sets $\cK^{\varphi_\infty} \coloneqq P_l(K^{\varphi_\infty}) \in \cD(M^2)$.
\end{definition}

We can also prove the following, which justifies the notation $K^{\varphi_\infty}$.

\begin{proposition}
	In the situation of \cref{definition:SQ_ham_homeo}, the object $K^H_1$ depends only on the time-1 map $\phi_1$.
\end{proposition}

\begin{proof}
	It suffices to show that if $\phi_1 = \id_{T^*M}$ then $K^H_1 \simeq \bfk_{\Delta_M \times \{0\}}$.
	We first prove the following lemma.
	
	\begin{lemma}\label{proposition:small_interval_id}
		Let $G \in \SD(\bfk_{\bR_t})$ with $d_{\bR_t}(G,\bfk_{\{0\}})<\infty$.
		Assume that there exists $\varepsilon>0$ such that for any $a<b$ with $b-a <\varepsilon$, one has $G \star \bfk_{[a,b)} \simeq \bfk_{[a,b)}$ and $G \star \bfk_{(a,b]} \simeq \bfk_{(a,b]}$.
		Then, $G \simeq \bfk_{\{0\}}$.
	\end{lemma}
	
	\begin{proof}
		First, we prove $\mathring{\MS}(G \star \bfk_{[0,\infty)})\subset \{(0;\tau)\mid \tau>0\}$ by contradiction. 
		Note that we already know that $\MS(G \star \bfk_{[0,\infty)}) \subset \{ \tau \ge 0 \}$ by \cref{lem:conv-hom-MS}. 
		Assume that there exists $(t_0;1)\in \mathring{\MS}(G \star \bfk_{[0,\infty)})$ with $t_0\neq 0$. 
		Then, there exist $t_1,t_2\in \bR$ such that $t_1<t_2$ and the restriction map $\RG ((-\infty,t_2);G \star \bfk_{[0,\infty)})\to \RG ((-\infty,t_1);G \star \bfk_{[0,\infty)})$ is not an isomorphism. 
		We may choose $t_1$ and $t_2$ arbitrary close to $t_0$ and hence may assume $t_2-t_1<\min\{t_1, \varepsilon\}$ if $t_0>0$ and $t_2<0, t_2-t_1<\varepsilon$ if $t_0<0$.
		Applying $G \star (\ast)$ to the exact triangle $\bfk_{[0,t_2-t_1)} \to \bfk_{[0,\infty)} \to \bfk_{[t_2-t_1, \infty)} \toone $, we have an exact triangle 
		\begin{equation}
			\bfk_{[0,t_2-t_1)} \to G \star \bfk_{[0,\infty)} \to T_{t_2-t_1} G \star \bfk_{[0, \infty)} \toone.
		\end{equation}
		Since $\RG ((-\infty, t_2);\bfk_{[0,t_2-t_1)})=0$, we get an isomorphism 
		\begin{equation}
			\RG ((-\infty,t_2);G \star \bfk_{[0,\infty)}) \simto \RG ((-\infty,t_1);G \star \bfk_{[0,\infty)}).
		\end{equation}
		The morphism coincides with the restriction map by the construction, which is a contradiction. 
		
		Similarly, we obtain $\mathring{\MS}(G \star \bfk_{(0,\infty)})\subset \{(0;\tau)\mid \tau<0\}$. 
		
		We get $\mathring{\MS}(G) \subset T^*_0 \bR_t$ by applying $G \star (\ast)$ to the exact triangle $\bfk_{(0,\infty)} \to \bfk_{[0,\infty)} \to \bfk_{\{0\}} \toone$. 
		Hence, we obtain the desired isomorphism from $d_{\bR_t}(G,\bfk_{\{0\}})<\infty$.
	\end{proof}
	
	Now let $\phi$ be a continuous Hamiltonian isotopy generated by a continuous function $H \colon T^*M \times I \to \bR$ and assume that $\phi_1 = \id_{T^*M}$. 
	By the microsupport estimate~\eqref{equation:ms_SQ_ham_homeo}, we get $\pi(\mathring{\MS}(K^H_1)) \subset \Delta_M \times \bR$. 
	By construction, $d_{M^2 \times \bR_t}(\bfk_{\Delta_M \times \{0\}},K^H_1)<\infty$.
	Restricting to $(M^2 \setminus \Delta_M) \times \bR_t$, we find that $d_{(M^2 \setminus \Delta_M) \times \bR_t}(0,K^H_1|_{(M^2 \setminus \Delta_M) \times \bR_t})<\infty$, which implies $K^H_1|_{(M^2 \setminus \Delta_M) \times \bR_t} \simeq 0$.
	Hence, we can write $K^H_1 \simeq (\delta_M \times \id_{\bR_t})_* K'$ with $K' \in \SD(\bfk_{M \times \bR_t})$.
	Again by the estimate~\eqref{equation:ms_SQ_ham_homeo} and \cref{proposition:SSpushpull}(i), we find that $\MS(K') \subset 0_M \times T^*\bR_t$. 
	
	Let us keep the notation of a compact set $C \subset T^*M$ and $K_n=K^{\varphi_n}$ as above (see also \cref{definition:hameo}).
	For $F \in \SD(\bfk_{M\times \bR_t})$ such that 
	\begin{equation}
		\mathring{\MS}(F) \cap \{(x,t;\xi,\tau) \mid \tau\neq 0, (x;\xi/\tau)\in C \}=\emptyset,
	\end{equation}
	we have $K_n\bullet F \simeq F$ for any $n \in \bN$, which implies $d_{M\times \bR_t}(K^H_1 \bullet F, F)=0$. 
	For each $x_0 \in M$ and $a<b \in \bR$ with sufficiently small $b-a$, there exists $F \in \SD(\bfk_{M\times \bR_t})$ such that (1)~its microsupport does not intersect the cone $\{(x,t;\xi,\tau) \mid \tau\neq 0, (x;\xi/\tau)\in C \}$ of $C$ and (2)~$F|_{\{x_0\} \times \bR_t}\simeq \bfk_{[a,b)}$. 
	For example, such $F$ is obtained as the sheaf quantization of an exact Lagrangian immersion of a sphere.  
	Therefore, $d_{\bR_t}(K'|_{\{x_0\}\times\bR_t}\star \bfk_{[a,b)}, \bfk_{[a,b)})\leq d_{M\times\bR_t}(K^H_1 \bullet F, F)=0$ if $b-a$ is sufficiently small. 
	Applying \cref{lemma:zero_dist_on_R} to $K'|_{\{x_0\}\times\bR_t}\star \bfk_{[a,b)}$ and $\bfk_{[a,b)}$, we get $K'|_{\{x_0\}\times\bR_t}\star \bfk_{[a,b)}\simeq \bfk_{[a,b)}$. 
	Similarly, we obtain $K'|_{\{x_0\}\times\bR_t}\star \bfk_{(a,b]}\simeq \bfk_{(a,b]}$ for any $x_0 \in M$ and sufficiently small $b-a$. 
	
	Hence, for any $x_0 \in M$, $K'|_{\{x_0\} \times \bR_t} \in \SD(\bfk_{\bR_t})$ satisfies the condition of \cref{proposition:small_interval_id}.
	Combining this with the estimate of $\MS(K')$, we may write $K'=L \boxtimes \bfk_{\{0\}}$, where $L$ is a rank one local system.
	Again by the fact that $d_{M^2 \times \bR_t}(\bfk_{\Delta_M \times \{0\}},K^H_1)<\infty$, we conclude that $L=\bfk_M$, which proves $K^H_1 \simeq \bfk_{\Delta_M \times \{0\}}$.
\end{proof}

\section{Spectral invariants in Tamarkin category}\label{section:spectral}

In this section, we define spectral invariants for an object of the Tamarkin category and develop Lusternik--Schnirelmann theory.
Most of the definitions and the results were announced to appear in Humili\`ere--Vichery~\cite{HV}.

\begin{definition}
	Let $F \in \cD(M)$.
	\begin{enumerate}
		\item For $n \in \bZ$, one defines
		\begin{align}
			Q_c^n(F) 
			& \coloneqq 
			\Hom(\bfk_{M \times [0,\infty)}, {T_c} F[n]) \\
			& \simeq 
			H^n \RHom(\bfk_{M \times [-c,\infty)}, F), \notag \\
			Q_\infty^n(F) & \coloneqq \varinjlim_{c \to \infty} Q_c^n(F).
		\end{align}
		Set $Q_c^*(F) \coloneqq \bigoplus_{n \in \bZ} Q_c^n(F)$ and similarly for $Q_\infty^*(F)$.
		Denote the canonical map by $i_c \colon Q_c^*(F) \to Q_\infty^*(F)$. 
		\item For $\alpha \in Q_\infty^*(F)$, one defines 
		\begin{equation}
			c(\alpha;F) \coloneqq \inf\{ c \in \bR \mid \alpha \in \Image i_c \} 
		\end{equation}
		and calls it the \emph{spectral invariant} of $F$ for $\alpha$.
		\item One defines 
		\begin{equation}
			\Spec(F) \coloneqq \{ c(\alpha;F) \in \bR \mid \alpha \in Q_\infty^*(F) \} \subset \bR.
		\end{equation}
	\end{enumerate}
\end{definition}

\begin{remark}
	In our definition, $c(0;F)=-\infty$. 
	In general, $c(\alpha; F)$ can be $-\infty$ for non-zero $\alpha \in Q_{\infty}^*(F)$. 
	We give such an example when $M=\mathrm{pt}$.
	Let $G \coloneqq \prod_{n \in \bZ_{\ge 1}} \bfk_{[-n,n)}[1]$ and define $g \colon \bfk_{[0,\infty)} \to G$ to be the product $\prod_{n \in \bZ_{\ge 1}} g_n$, where $g_n \colon \bfk_{[0,\infty)} \to \bfk_{[-n,n)}[1]$ is a non-trivial morphism.
    Set $F \coloneqq P_l(G)$, the projection to $\cD(\pt)$. 
    Then the projector $P_l$ induces a morphism $\tilde{g} \colon \bfk_{[0,\infty)} \to F$ in $\cD(\pt)$, which satisfies $[\tilde{g}] \neq 0$ in $Q_\infty^0(F)$ and $c([\tilde{g}]; F)=-\infty$.
\end{remark}

Let $t \colon M \times \bR_t \to \bR_t$ be the projection and let $\Gamma_{dt}$ denote the graph of the $1$-form $dt$:
\begin{equation}
	\Gamma_{dt} \coloneqq \{ (x,t;0,1) \mid (x,t) \in M \times \bR_t \}.
\end{equation}
Note that 
\begin{equation}
	\Spec(F) \subset \{ -c \in \bR_t \mid c \in t\pi(\MS(F) \cap \Gamma_{dt}) \}.
\end{equation}

In order to state Lusternik--Schnirelmann theory for sheaves, we recall the algebraic counterpart of cup-length, which is studied in \cite{AISQ}.

\begin{definition}
	Let $R$ be an associative (not necessarily commutative) non-unital ring over $\bfk$. 
	For a right $R$-module $A$, one defines 
	\begin{equation}
		\cl_R(A) 
		\coloneqq 
		\inf 
		\lc 
		k-1 \; \middle| \; 
		\begin{aligned}
			& k \in \bN, a_0 \cdot r_1 \cdots r_k =0 \\
			& \text{for any $a_0 \in A$ and $(r_1,\dots , r_k) \in R^k$}
		\end{aligned}
		\rc
		\in \bZ_{\ge -1} \cup \{\infty\}.
	\end{equation}
\end{definition}

We note that $\cl_R(A)=-1$ if and only if $A=0$. 
If there is no risk of confusion, we simply write $\cl(A)$ for $\cl_R(A)$.
By definition, we have the following lemma.

\begin{lemma}\label{lemma:cl_exact}
	For an exact sequence of right $R$-modules $0 \to A \to B \to C \to 0$, one has 
	\begin{equation}
		\cl(B) \le \cl(A) + \cl(C) +1.
	\end{equation}
\end{lemma}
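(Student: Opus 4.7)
The plan is to verify the defining cup-length inequality for $B$ directly, by splitting any sufficiently long product into a piece that becomes zero modulo $A$ and a piece that is killed inside $A$. Write $a \coloneqq \cl(A)$ and $c \coloneqq \cl(C)$, and let $f \colon A \hookrightarrow B$ and $g \colon B \twoheadrightarrow C$ be the maps of the short exact sequence. If either $a$ or $c$ equals $+\infty$ the bound is vacuous, and if $A=0$ (i.e.\ $a=-1$) then $g$ is an isomorphism, so $\cl(B)=\cl(C)=a+c+1$, with the symmetric case $C=0$ handled analogously. So I may assume $a,c\in\bN$.

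Fix an arbitrary $b_0\in B$ together with $a+c+2$ elements $r_1,\dots,r_{a+c+2}\in R$. The goal is to show
\begin{equation}
    b_0\cdot r_1\cdots r_{a+c+2}=0,
\end{equation}
which, by the definition of $\cl$, immediately yields $\cl(B)\le a+c+1$.

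The argument proceeds in two steps along the exact sequence. First, push forward to $C$: since $\cl(C)=c$, any product of $g(b_0)$ with $c+1$ elements of $R$ vanishes, so
$g(b_0\cdot r_1\cdots r_{c+1})=g(b_0)\cdot r_1\cdots r_{c+1}=0$. Hence the partial product $b_0\cdot r_1\cdots r_{c+1}$ lies in $\ker g=f(A)$ and equals $f(a_0)$ for some $a_0\in A$. Second, multiply on the right by the remaining $a+1$ factors $r_{c+2},\dots,r_{a+c+2}$; since $\cl(A)=a$, the element $a_0\cdot r_{c+2}\cdots r_{a+c+2}\in A$ is zero, so by $R$-linearity of $f$,
\begin{equation}
    b_0\cdot r_1\cdots r_{a+c+2}=f(a_0)\cdot r_{c+2}\cdots r_{a+c+2}=f(a_0\cdot r_{c+2}\cdots r_{a+c+2})=0.
\end{equation}

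There is no substantive obstacle: the content is purely a bookkeeping observation that the index split uses $c+1$ factors for the first step and $a+1$ factors for the second, summing to $a+c+2$. The only subtlety worth double-checking is consistency with the edge cases $\cl\in\{-1,+\infty\}$, which amounts to matching the convention $\cl(0)=-1$ that is already recorded after the definition of $\cl_R$.
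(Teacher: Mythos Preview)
Your proof is correct and is exactly the direct verification that the paper has in mind: the paper records this lemma with the remark ``By definition, we have the following lemma'' and gives no further argument, so your two-step splitting of the product along the exact sequence is precisely the intended elaboration.
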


Let $F \in \cD(M)$.
Then, we have a right action of $\End(\bfk_{M \times [0,\infty)}) \simeq H^*(M;\bfk)$ on $Q_c^*(F)$, which induces an action on $Q_\infty^*(F)$. 
Hereafter we set $R \coloneqq \bigoplus_{n \ge 1} H^n(M;\bfk)$ and consider the cup-length over $R$.
Note that the cup-length over $R$ is always finite. 

\begin{theorem}\label{theorem:sheaf_spectral_invariant}
	Let $F \in \cD(M)$. 
	Assume that $t$ is proper on $\Supp(F)$ and there exists $c \ll 0$ satisfying $i_c=0$.
	Let $\pi_M \colon T^*(M \times \bR_t) \to M$ denote the projection.
	If $\# \Spec(F) \le \cl(Q_\infty^*(F))$, then there exists $c \in \Spec(F)$ such that $\pi_M(\MS(F) \cap \Gamma_{dt} \cap \pi^{-1} t^{-1}(-c))$ is cohomologically non-trivial in $M$. 
	That is, for any open neighborhood $U$ of $\pi_M(\MS(F) \cap \Gamma_{dt} \cap \pi^{-1} t^{-1}(-c))$, the restriction map $\bigoplus_{n \ge 1} H^n(M;\bfk) \to \bigoplus_{n \ge 1} H^n(U;\bfk)$ is non-zero.
\end{theorem}

For $F \in \cD(M)$, if $F|_{M \times (c,\infty)}$ is locally constant for $c \gg 0$, then $i_c=0$ for $c \ll 0$.
If the conclusion holds, then $\pi_M(\MS(F) \cap \Gamma_{dt})$ is also cohomologically non-trivial in $M$.

For the proof of the theorem, we prepare some notation. 
For $d \in \bR$, we define 
\begin{equation}
	Q_{\infty, d}^*(F) \coloneqq \Image(i_d \colon Q_d^*(F) \to Q_\infty^*(F)) \subset Q_\infty^*(F).
\end{equation}
Then we get the following properties:
\begin{enumerate}
	\item[(1)] If $d<d'$, then $Q_{\infty, d}^*(F) \subset Q_{\infty, d'}^*(F)$. 
	\item[(2)] If $[d,d'] \cap \Spec(F) = \emptyset$, then $Q_{\infty, d}^*(F) \simeq Q_{\infty, d'}^*(F)$. 
	\item[(3)] For $d<d'$, there exists an exact sequence of right $H^*(M;\bfk)$-modules
	\begin{equation}\label{eq:Qinfty_exact}
		0 \to Q_{\infty, d}^*(F) \to Q_{\infty, d'}^*(F) \to Q_{\infty, d'}^*(F)/Q_{\infty, d}^*(F) \to 0.
	\end{equation}
	Moreover, we have 
	\begin{equation}
		\cl(H^*_{M \times [-d',-d)}(M \times \bR;F)) \ge \cl(Q_{\infty, d'}^*(F)/Q_{\infty, d}^*(F)).
	\end{equation}
\end{enumerate}

\begin{proof}[Proof of \cref{theorem:sheaf_spectral_invariant}]
	If $Q_\infty^*(F) \simeq 0$, then $\cl(Q_\infty^*(F))=-1$ and there is nothing to prove.
	
	Suppose that $Q_\infty^*(F) \neq 0$.
	Since $\cl(Q_\infty^*(F))$ is finite, we may assume that $\Spec(F)$ is finite and set $\Spec(F)=\{c_1, \dots, c_N \}$ with $c_1 < c_2 < \dots < c_N$.
	Let $d_0,d_1, \dots, d_N \in \bR$ such that $d_0 < c_1 < d_1< \dots < d_{N-1} < c_N < d_N$.
	Note that $Q_{\infty, d_0}^*(F)=0$ by the assumption and $Q_{\infty, d_N}^*(F)=Q_\infty^*(F)$. 
	Applying \cref{lemma:cl_exact} to the exact sequence \eqref{eq:Qinfty_exact} with $d=d_{i-1}, \allowbreak d'=d_i$, by induction we get 
	\begin{equation}
		\cl(Q_\infty^*(F)) \le N-1 + \sum_{i=1}^N \cl(Q_{\infty, d_i}^*(F)/Q_{\infty, d_{i-1}}^*(F)).
	\end{equation}
	Hence if $\# \Spec(F)=N \le \cl(Q_\infty^*(F))$, there exists $i \in \{1,\dots, N\}$ such that 
	\begin{equation}
		\cl(Q_{\infty, d_i}^*(F)/Q_{\infty, d_{i-1}}^*(F)) \ge 1.    
	\end{equation}
	
	For such $i$ above, we claim that $\pi_M(\MS(F) \cap \Gamma_{dt} \cap \pi^{-1} t^{-1}(-c_i))$ is cohomologically non-trivial in $M$.
	For $c \in \bR$ and $I \subset \bR$, set 
	\begin{equation}
		K_{c} \coloneqq \pi_M(\MS(F) \cap \Gamma_{dt} \cap \pi^{-1} t^{-1}(-c)), \
		K_I \coloneqq \bigcup_{c \in I} K_c \ \subset M.
	\end{equation}
	Let $U$ be any open neighborhood of $K_{c_i}$ in $M$. 
	We take $K_{c_i} \subset U_0 \Subset U_1 \Subset U$ and a $C^\infty$-function $\rho \colon M \to \bR$ such that 
	\begin{enumerate}
		\item[(1)] $\rho(x) \in [-1,1]$ for any $x \in M$,
		\item[(2)] $\rho|_{U_0} \equiv 1$, 
		\item[(3)] $\rho|_{M \setminus U_1} \equiv -1$. 
	\end{enumerate}
	Then there exists $\varepsilon>0$ such that $K_{[c_i-\varepsilon, c_i+\varepsilon]} \subset U_0$. 
	Taking $0 < \varepsilon' \ll \varepsilon$ and setting $\rho' \coloneqq \varepsilon' \rho \colon M \to \bR$, we may assume 
	\begin{equation}
		(x,t;s\rho'(x),1) \not\in \MS(F)
	\end{equation}
	for $x \in U_1 \setminus U_0, t \in [-c_i-\varepsilon',-c_i+\varepsilon']$, and $s \in [0,1]$.
	Hence, we can apply the microlocal Morse lemma to $F$ and $(V_s)_{s \in [0,1]}$, where 
	\begin{equation}
		V_s \coloneqq \{ (x,t) \in M \times \bR_t \mid t < -c_i+(s-1)\varepsilon' -s \rho'(x) \},
	\end{equation}
	and obtain an isomorphism 
	\begin{equation}
		\RG(M \times (-\infty, -c_i-\varepsilon');F)=\RG(U_0;F)
		\simto \RG(U_1;F).
	\end{equation}
	We set $X=M \times \bR$ and 
	\begin{equation}
		Z \coloneqq
		M \times (-\infty, -c_i+\varepsilon') \setminus U_1
		=
		\{ (x,t) \in M \times \bR_t \mid -c_i-\rho'(x) \le t < -c_i+\varepsilon'\}.
	\end{equation}
	By the above isomorphism, we get a morphism of exact triangles
	\begin{equation}
		\xymatrix@C=8pt{
			\RG_{M \times [-c_i+\varepsilon',\infty)}(X;F) \ar[r] \ar[d]^-{\rotatebox{90}{$\sim$}} & \RG_{M \times [-c_i+\varepsilon', \infty) \cup Z}(X;F) \ar[r] \ar[d]^-{\rotatebox{90}{$\sim$}} & \RG_{Z}(X;F) \ar[r] \ar[d] &\\
			\RG_{M \times [-c_i+\varepsilon',\infty)}(X;F) \ar[r] & \RG_{M \times [-c_i-\varepsilon',\infty)}(X;F) \ar[r] & \RG_{M \times [-c_i-\varepsilon',-c_i+\varepsilon')}(X;F) \ar[r] & ,
		}
	\end{equation}
	where the middle vertical morphism is an isomorphism by the above argument.
	Hence, by the five lemma, we have an isomorphism 
	\begin{equation}
		\RG_{M \times [-c_i-\varepsilon', -c_i+\varepsilon')}(M \times \bR_t;F) \xleftarrow{\sim} \RG_Z(M \times \bR_t;F) 
		\simeq \RHom(\bfk_Z,F).
	\end{equation}
	Since $\Supp(\bfk_Z) =\overline{Z} \subset \overline{U_1} \times [-c_i-\varepsilon', -c_i+\varepsilon']$, the action of $H^*(M;\bfk)$ on $H^*_Z(M \times \bR_t;F)$ factors through $H^*(\overline{U_1})$.
	Hence, we have
	\begin{equation}
		\begin{aligned}
			\cl(H^*(\overline{U_1})) 
			& \ge \cl(H^*_Z(M \times \bR_t;F)) \\
			& = \cl(H^*_{M \times [-c_i-\varepsilon', -c_i+\varepsilon')}(M \times \bR_t;F)) \\
			& \ge \cl(Q_{\infty, c+\varepsilon'}^*(F)/Q_{\infty, c-\varepsilon'}^*(F)) \ge 1.
		\end{aligned}
	\end{equation}
	Thus, we conclude that $\overline{U_1}$ is cohomologically non-trivial in $M$, which implies that $U$ is also cohomologically non-trivial in $M$.
\end{proof}

We consider the spectral invariants for the sheaf associated with a Hamiltonian diffeomorphism/homeomorphism and a compact exact Lagrangian submanifold.

Let $L$ be a compact exact Lagrangian submanifold of $T^*M$. 
Take a function $f \colon L \to \bR$ satisfying $\alpha_{T^*M}|_{L} = df$ and define 
\begin{equation}
	\widehat{L} \coloneqq \{ (x,t;\xi,\tau) \mid \tau >0, (x;\xi/\tau) \in L, t=-f(x;\xi/\tau) \}.
\end{equation}
In this setting, Guillermou~\cite{Gu12} (see also \cite{Gu23, Viterbo-Sheaves}) proved the existence and the uniqueness of an object $F_L \in \cD(M)$ that satisfies $\rMS(F_L) =\widehat{L}$ and $F_L|_{M \times (c,\infty)} \simeq \bfk_{M \times (c,\infty)}$ for a sufficiently large $c>0$.
We call $F_L$ the canonical simple sheaf quantization of $L$. 
When $L=0_M$ and $f \equiv 0$, we have $F_{0_M} \simeq \bfk_{M \times [0,\infty)}$.

Moreover, let $\varphi \in \Ham_c(T^*M,\omega)$ be a compactly supported Hamiltonian diffeomorphism.
We define the set of spectral invariants of $\Spec(\varphi,L)$ of the Lagrangian submanifold $\varphi(L)$ by 
\begin{equation}
	\Spec(\varphi,L) \coloneqq \Spec(\cK^\varphi \bullet F_L), 
\end{equation}
where $\cK^\varphi \in \cD(M^2)$ is the sheaf quantization of $\varphi$.
This set is well-defined up to shift. 
By a result of Viterbo~\cite{Viterbo-Sheaves}, $\Spec(\varphi,L)$ is equal to the set of the Floer-theoretic spectral invariants associated with $0_M$ and $\varphi(L)$.
If two Hamiltonian diffeomorphisms $\varphi, \varphi' \in \Ham_c(T^*M,\omega)$ satisfy $\varphi(L)=\varphi'(L)$, then there exists some constant $C \in \bR$ such that
\begin{equation}
	\Spec(\cK^\varphi \bullet F_L) = \Spec(\cK^{\varphi'} \bullet F_L) + C.
\end{equation}
Indeed, since both of $\cK^\varphi \bullet F_L$ and $\cK^{\varphi'} \bullet F_L$ are canonical simple sheaf quantizations of $\varphi(L)$, by the uniqueness result, we have $\cK^\varphi \bullet F_L \simeq T_{c} (\cK^{\varphi'} \bullet F_L)$ for some $c \in \bR$.

Let $\varphi_\infty$ be a Hamiltonian homeomorphism and $(H_n)_{n \in \bN}$ a sequence of smooth functions that satisfies the condition in \cref{definition:hameo}.
For any $n \in \bN$, we set $\varphi_n \coloneqq \phi^{H_n}_1$ and consider the sheaf quantization $K^{\varphi_n}$ of $\varphi_n$. 
Then, the sequence $(\cK^{\varphi_n})_{n \in \bN}$ forms a Cauchy sequence with respect to $d_{\cD(M^2)}$ and gives an object  $\cK^{\varphi_\infty}$. 
In this situation, we define the set of spectral invariants $\Spec(\varphi_\infty, L)$ by 
\begin{equation}
	\Spec(\varphi_\infty, L) \coloneqq \Spec(K^{\varphi_\infty} \bullet F_{L}).
\end{equation}
This is well-defined up to shift.

\begin{lemma}\label{lemma:spec_hameo}
	One has 
	\begin{equation}
		\Spec(\varphi_\infty, L) = \lim_{n \to \infty} \Spec(\varphi_n, L).
	\end{equation}
\end{lemma}

\begin{proof}
	Since $d_{\cD(M^2)}(\cK^{\varphi_n},\cK^{\varphi_\infty}) \to 0 \ (n \to \infty)$, we have $d_{\cD(M)}(\cK^{\varphi_n} \bullet F_L,\cK^{\varphi_\infty} \bullet F_L) \to 0 \ (n \to \infty)$.
	Hence, we get the result.
\end{proof}

The spectral norm for Lagrangian submanifolds is defined as follows.

\begin{definition}\label{definition:spectral_norm}
	Let $\varphi \colon T^*M \to T^*M$ be a Hamiltonian diffeomorphism.
	One defines
	\begin{equation}
		\gamma (\varphi(0_M))\coloneqq \max\Spec(\cK^\varphi \bullet \bfk_{M \times [0,\infty)})+\max\Spec(\cK^{\varphi^{-1}} \bullet \bfk_{M \times [0,\infty)})
	\end{equation}
	and calls it the \emph{spectral norm} of $\varphi(0_M)$.
\end{definition}

Note that the spectral norm $\gamma(\varphi(0_M))$ depends only on the image $\varphi(0_M)$. 
This follows from \cref{proposition:spectral_norm_distance} below and the fact that if $\varphi, \varphi' \in \Ham_c(T^*M,\omega)$ satisfy $\varphi(0_M)=\varphi'(0_M)$ then $\cK^\varphi \bullet \bfk_{M \times [0,\infty)} \simeq T_c(\cK^{\varphi'} \bullet \bfk_{M \times [0,\infty)})$ for some constant $c \in \bR$. 
We also remark that $\gamma(\varphi(0_M))$ in \cref{definition:spectral_norm} is the same as that in \cite{viterbo1992symplectic,oh97,oh99} by the above argument.

We describe this spectral norm in terms of the distance on the Tamarkin category. 
For that purpose, we introduce an interleaving distance up to shift.

\begin{definition}
	For $F,G \in \cD(M)$, one defines 
	\begin{equation}
		\overline{d_{\cD(M)}}(F,G)\coloneqq \inf_{c\in\bR} d_{\cD(M)}(F,T_cG).
	\end{equation}
\end{definition}

\begin{proposition}\label{proposition:spectral_norm_distance}
	For a Hamiltonian diffeomorphism $\varphi \colon T^*M \to T^*M$, one has 
	\begin{equation}
		\gamma (\varphi(0_M))=\overline{d_{\cD(M)}}( \bfk_{M \times [0,\infty)},\cK^\varphi \bullet \bfk_{M \times [0,\infty)}).
	\end{equation}
\end{proposition}

\begin{proof}
	We will argue similarly to the proof of \cref{theorem:SQ_inequality}. 
	Again let $\Tor$ be the full triangulated subcategory of $\cD(M)$ consisting of torsion objects $\{F \mid d_{\cD(M)}(F, 0)<\infty\}$. 
	Then, the Hom set of the localized category $\cD(M)/\Tor$ is computed as 
	\begin{equation}
		\Hom_{\cD(M)/\Tor}(F,G) \simeq \varinjlim_{c\to \infty}\Hom_{\cD(M)}(F,T_cG). 
	\end{equation}
	Hence, $Q_\infty^*(F)$ is isomorphic to $\bigoplus_n \Hom_{\cD(M)/\Tor}(\bfk_{M \times [0,\infty)},F[n])$. 
	
	For an object $F\in \cD(M)$ with $d(\bfk_{M \times [0,\infty)}, F)<\infty$, $F$ and $\bfk_{M \times [0,\infty)}$ are isomorphic in $\cD(M)/\Tor$. 
	On the other hand, any isomorphism $\bar{\alpha}\in \Hom_{\cD(M)/\Tor}(\bfk_{M \times [0,\infty)},F) \simeq Q_\infty^0(F)$ gives an isomorphism $Q_\infty^*(F)\simeq \bigoplus_n \Hom_{\cD(M)/\Tor}(\bfk_{M \times [0,\infty)},\bfk_{M \times [0,\infty)}[n])\simeq H^*(M)$ of right $H^*(M)$-modules that sends $\bar{\alpha}\in Q_\infty^0(F)$ to $1\in H^0(M)\simeq \bfk$. 
	Note that 
	\begin{equation}
		Q_{\infty,c}^0(F)\simeq
		\begin{cases}
			\bfk &(c>c(\bar{\alpha},F))\\
			0 &(c<c(\bar{\alpha},F))
		\end{cases}
	\end{equation}
	by definition. 
	Since $Q_{\infty,c}^*(F)\subset Q_\infty^*(F)$ is an $H^*(M)$-submodule for any $c$, 
	$Q_{\infty,c}^0(F)\neq 0$ if and only if $Q_{\infty,c}^*(F)= Q_\infty^*(F)$. 
	Hence we obtain 
	\begin{equation}
		\max\Spec(F)=c(\bar{\alpha},F). 
	\end{equation}
	
	\noindent (i) 
	Let $a,b\in \bR$ and 
	\begin{align}
		& \alpha\colon \bfk_{M \times [0,\infty)}\to T_a \cK^\varphi \bullet\bfk_{M \times [0,\infty)}, \\
		& \beta\colon \cK^\varphi \bullet \bfk_{M \times [0,\infty)}\to T_b\bfk_{M \times [0,\infty)}
	\end{align}
	such that $\alpha $ descends to an isomorphism $\bar{\alpha}\in \Hom_{\cD(M)/\Tor}(\bfk_{M \times [0,\infty)},\cK^\varphi \bullet \bfk_{M \times [0,\infty)})$ and $\beta$ descends to its inverse $\bar{\beta}$. 
	Since $\bar{\alpha}\in Q_{\infty,a}^0(\cK^\varphi \bullet \bfk_{M \times [0,\infty)})$ is non-zero, $a\geq \max\Spec(\cK^\varphi \bullet \bfk_{M \times [0,\infty)})$. 
	On the other hand, $\beta$ gives a non-zero element of $Q_{\infty,b}^0(\cK^{\varphi^{-1}} \bullet \bfk_{M \times [0,\infty)})$ since $\cK^{\varphi^{-1}} \bullet (\ast)$ is the inverse functor of $\cK^\varphi \bullet (\ast)$. 
	Hence, we obtain $b\geq \max\Spec(\cK^{\varphi^{-1}} \bullet \bfk_{M \times [0,\infty)})$. 
	This shows $\gamma (\varphi(0_M))\leq \overline{d_{\cD(M)}}( \bfk_{M \times [0,\infty)},\cK^\varphi \bullet \bfk_{M \times [0,\infty)})$.
	
	\noindent (ii) 
	For any $a> \max\Spec(\cK^\varphi \bullet \bfk_{M \times [0,\infty)})$, there exists $\alpha\colon \bfk_{M \times [0,\infty)}\to T_a \cK^\varphi \bullet\bfk_{M \times [0,\infty)}$ that descends to an isomorphism  $\bar{\alpha}\in \Hom_{\cD(M)/\Tor}(\bfk_{M \times [0,\infty)},\cK^\varphi \bullet \bfk_{M \times [0,\infty)})$.  
	For any $b>\max\Spec(\cK^{\varphi^{-1}} \bullet \bfk_{M \times [0,\infty)})$, there also exists $\beta\colon \cK^\varphi \bullet \bfk_{M \times [0,\infty)}\to T_b\bfk_{M \times [0,\infty)}$ that descends to 
	an isomorphism $\beta$. Since $\Hom_{\cD(M)/\Tor}(\bfk_{M \times [0,\infty)},\bfk_{M \times [0,\infty)})\simeq \bfk$, we may assume that $\bar{\beta}$ is the inverse of $\bar{\alpha}$ after multiplying a non-zero element of $\bfk$ to $\beta$. 
	The composite $\bar{\beta} \circ \bar{\alpha}=\id_{\bfk_{M \times [0,\infty)}}$ can be regarded as a non-zero element of $Q_{\infty, a+b}^0(\bfk_{M \times [0,\infty)})$. 
	Noting that 
	\begin{equation}\label{equation:Qc_bfk}
		Q_{c}^0(\bfk_{M \times [0,\infty)})\simeq Q_{\infty,c}^0(\bfk_{M \times [0,\infty)})\simeq
		\begin{cases}
			\bfk &(c\ge 0)\\
			0 &(c<0),
		\end{cases}
	\end{equation}
	we obtain $a+b\ge 0$ and the preimage $T_a\beta\circ \alpha \in Q_{a+b}^0(\bfk_{M \times [0,\infty)})$ of $\id_{\bfk_{M \times [0,\infty)}}=\bar{\beta} \circ \bar{\alpha}$ is $\tau_{a+b}(\bfk_{M \times [0,\infty)})$. 
	Similarly, we obtain $T_b\alpha\circ\beta=\tau_{a+b}(\cK^{\varphi} \bullet \bfk_{M \times [0,\infty)})$ using 
	\begin{equation}
		\Hom_{\cD(M)}(\cK^{\varphi} \bullet \bfk_{M \times [0,\infty)}, T_c \cK^{\varphi} \bullet \bfk_{M \times [0,\infty)})\simeq Q_{c}^0(\bfk_{M \times [0,\infty)})
	\end{equation}
	and \eqref{equation:Qc_bfk}.
	This proves $\gamma (\varphi(0_M))\geq \overline{d_{\cD(M)}}( \bfk_{M \times [0,\infty)},\cK^\varphi \bullet \bfk_{M \times [0,\infty)})$. 
\end{proof}

\begin{remark}
	One can define Hamiltonian spectral invariants in a sheaf-theoretic way as follows. 
	Let $H \colon T^*M \times I \to \bR$ be a timewise compactly supported function and $K^H \in \SD(\bfk_{M^2 \times \bR_t \times I})$ the associated sheaf quantization.
	Then, we define 
	\begin{equation}
		\Spec(H) \coloneqq \Spec(P_l(\cHom^\star(\bfk_{\Delta_M \times [0,\infty)},K^H_1))).
	\end{equation}
	The Hamiltonian spectral norm is also defined by  
	\begin{equation}
		\gamma(H) \coloneqq \max\Spec(H)+\max\Spec(\overline{H})
	\end{equation}
	and we obtain 
	\begin{equation}
		\gamma(H)=\overline{d_{\cD(M^2)}}( \bfk_{\Delta_M \times [0,\infty)},\cK^H_1). 
	\end{equation}
	We conjecture that $\gamma(H)$ coincides with the Hamiltonian spectral norm of $H$ defined by Frauenfelder--Schlenk~\cite{FS07} for a compact manifold $M$.
\end{remark}

\section{Arnold-type principle for Hamiltonian homeomorphisms}\label{section:arnold}

In this section, we use the previous results to prove an Arnold-type theorem for a Hamiltonian homeomorphism of a cotangent bundle in a purely sheaf-theoretic way.
Throughout this section, we assume that $M$ is compact. 

Let $L$ be a compact exact Lagrangian submanifold of $T^*M$ and $F_L$ be the canonical simple sheaf quantization of $L$. 
Let $\varphi_\infty$ be a Hamiltonian homeomorphism and $K^{\varphi_\infty}$ be the sheaf quantization of $\varphi_\infty$. 
We set $F_\infty \coloneqq \cK^{\varphi_\infty} \bullet F_L$.
Then, by \eqref{equation:ms_SQ_ham_homeo} we have
\begin{align}\label{eq:ms-esitimate}
	\MS(F_\infty) \cap \{ \tau >0 \} \subset
	\{ (x,t;\xi,\tau) \mid \tau >0, (x;\xi/\tau) \in \varphi_\infty(L) \}.
\end{align}
Moreover, by construction and the property of $F_L$, we get $Q_\infty^*(F_\infty) \simeq H^*(M;\bfk)$.

Combining the previous results, we obtain the following result by a purely sheaf-theoretic method.
In the case $L$ is the zero-section $0_M$, it was proved by \cite{buhovsky2019arnold} in a more general setting (see below).
We set $\cl(M) \coloneqq \cl(H^*(M;\bfk))$, which is called the cup-length of $M$ over $\bfk$.

\begin{theorem}\label{theorem:ineq_hameo}
	Let $L$ be a compact exact Lagrangian submanifold of $T^*M$ and $\varphi_\infty$ be a Hamiltonian homeomorphism of $T^*M$.
	If $\# \Spec(\varphi_\infty, L) \le \cl(M)$, then $0_M \cap \varphi_\infty(L)$ is cohomologically non-trivial in $M$, in particular it is infinite.
\end{theorem}

\begin{proof}
	Let $t \colon M \times \bR_t \to \bR_t$ and $\pi_M \colon T^*(M \times \bR_t) \to M$ be the projections.
	Let $\Gamma_{dt}$ denote the graph of the 1-form $dt$. 
	Then, by \eqref{eq:ms-esitimate}, we have $\pi_M(\MS(F_\infty) \cap \Gamma_{dt}) \subset 0_M \cap \varphi_\infty(0_M)$.
	Thus we obtain the result by applying \cref{theorem:sheaf_spectral_invariant,lemma:spec_hameo} to $F_\infty$.
\end{proof}

By using the spectral norm $\gamma$ and its $C^0$-continuity, we can construct a sheaf quantization the image of the zero-section $0_M$ under a $C^0$-limit of Hamiltonian diffeomorphisms. 
With the sheaf quantization, we can also recover \cite[Thm.~1.1]{buhovsky2019arnold}.
Note that the proof is not purely sheaf-theoretic.

\begin{proposition}\label{proposition:arnold_hamhomeo}
	Let $\varphi_\infty \colon T^*M \to T^*M$ be a compactly supported homeomorphism.
	Assume that there exist a compact subset $C \subset T^*M$ and a sequence of Hamiltonian diffeomorphisms $(\varphi_n)_{n \in \bN} \subset \Ham_c(T^*M,\omega)$ supported in $C$ that $C^0$-converges to $\varphi_\infty$ for some Riemannian metric. 
	\begin{enumerate}
		\item There exists an object $F_\infty \in \cD(M)$ such that $d_{\cD(M)}(\bfk_{M\times [0,\infty)}, F_\infty) < \infty$ and 
		\begin{equation}
			\MS(F_\infty) \cap \{ \tau >0\} \subset \{ (x,t;\xi, t) \mid \tau > 0, (x;\xi/\tau) \in \varphi_\infty(0_M) \}.
		\end{equation}
		\item 
		There exists a sequence of real numbers $(c_n)_{n \in \bN} \subset \bR$ such that $\Spec(\varphi_\infty,0_M) \coloneqq \lim_{n \to \infty} T_{-c_n} \Spec(\varphi_n,0_M)$ is well-defined up to shift. 
		Moreover, if $\# \Spec(\varphi_\infty, 0_M) \le \cl(M)$, then $0_M \cap \varphi_\infty(0_M)$ is cohomologically non-trivial in $M$, in particular it is infinite.
	\end{enumerate}
\end{proposition}

\begin{proof}
	(i) 
	Our $\gamma (\psi (0_M))$ coincides with $\gamma (\psi (0_M))$ in \cite{buhovsky2019arnold} for any compactly supported Hamiltonian diffeomorphism $\psi$.
	By \cite[Thm.~4.1]{buhovsky2019arnold}, for any $\varepsilon >0$, there exists $\delta >0 $ such that $d_{C^0}(\psi, \id_{T^* M})<\delta$ implies $\gamma (\psi (0_M))<\varepsilon$. 
	By the $C^0$-convergence of $(\varphi_n)_{n \in \bN}$, for any $\delta >0$, there exists $N\in \bN$ such that if $n,m \ge N$, then $d_{C^0}(\varphi^{-1}_n\varphi_m, \id_{T^*M}) <\delta$. 
	Hence, for any $\varepsilon >0$, there exists $N\in \bN$ such that if $n,m \ge N$, then $\gamma (\varphi^{-1}_n\varphi_m(0_M)) < \varepsilon$. 
	
	We define $F_n\coloneqq \cK^{\varphi_n} \bullet \bfk_{M \times [0,\infty)} \in \cD(M)$.
	By \cref{proposition:spectral_norm_distance}, 
	\begin{equation}
		\gamma (\varphi^{-1}_n\varphi_m(0_M))=
		\overline{d_{\cD(M)}}(\bfk_{M\times [0,\infty)}, \cK^{\varphi^{-1}_n} \bullet \cK^{\varphi_m} \bullet 
		\bfk_{M\times [0,\infty)})=
		\overline{d_{\cD(M)}}(F_n, F_m).
	\end{equation}
	Hence, the sequence $(F_n)_{n \in \bN}$ is a Cauchy sequence with respect to $\overline{d_{\cD(M)}}$. 
	This implies that there exists a sequence $(c_n)_{n \in \bN}$ of real numbers such that $(T_{c_n}F_n)_{n \in \bN}$ is Cauchy with respect to $d_{\cD(M)}$. 
	Thus, there exists a limit object $F_\infty$ of $(T_{c_n}F_n)_{n \in \bN}$ by \cref{corollary:limit_Tamarkin}. 
	By construction, we obtain $d_{\cD(M)}(\bfk_{M\times [0,\infty)}, F_\infty) < \infty$ and the desired microsupport estimate.  
	
	\noindent (ii) By construction, we find that 
	\begin{equation}
		\Spec(F_\infty) = \lim_{n \to \infty} \Spec(T_{c_n}F_n) = \lim_{n \to \infty} T_{-c_n} \Spec(\varphi_n, 0_M). 
	\end{equation}
	Thus, applying \cref{theorem:sheaf_spectral_invariant} to $F_\infty$, we obtain the result.
\end{proof}

Note that the number of $\Spec(\varphi_\infty,0_M)$ is the same as that of \cite{buhovsky2019arnold}.

\section{Arnold-type principle for Hausdorff limits of Legendrians}\label{section:legendrian}

In this section, we briefly discuss how to prove a Legendrian analogue of \cref{theorem:ineq_hameo} (cf.\ \cite[Thm.~1.5]{buhovsky2019arnold}) by a sheaf-theoretic method. 
Again, we assume that $M$ is compact.

Denote by $J^1M=T^*M \times \bR$ the 1-jet bundle. 
For a compact Legendrian submanifold $L$ of $J^1M$, we define a conic Lagrangian submanifold $c(L)$ of $T^*(M \times \bR_t)$ by 
\begin{equation}
	c(L) \coloneqq \{ (x,t;\xi, \tau) \mid \tau >0, (x,\xi/\tau,t) \in L \}.
\end{equation}
Let $L$ be a Legendrian submanifold without Reeb chords. 
Then by the results in \cite[Part~XII]{Gu23}, we can construct $F_L \in \SD(\bfk_{M \times \bR_t})$ such that $\rMS(F_L)=c(L)$.

Consider a sequence $(L_n)_{n \in \bN}$ of compact Legendrian submanifolds without Reeb chords of $J^1M$. 
Assume that $(L_n)_{n \in \bN}$ converges to a compact subset $L_\infty$ of $J^1M$ with respect to the Hausdorff distance.  
For each $L_n$, we can construct $F_n \coloneqq F_{L_n} \in \SD(\bfk_{M \times \bR_t})$ as above.
Following \cite{guillermou2013gromov} (see also \cite[Part~VII]{Gu23}), we define $F_\infty \in \SD(\bfk_{M \times \bR_t})$ by the exact triangle
\begin{equation}
	\bigoplus_{n \in \bN} F_n \lto \prod_{n \in \bN} F_n \lto F_\infty \toone.  
\end{equation}
Then we find $F_\infty \in \cD(M)$ and get $Q_\infty^*(F_\infty) \simeq C \otimes H^*(M;\bfk)$, where $C \coloneqq \Coker(\bigoplus_{n \in \bN} \bfk \to \prod_{n \in \bN} \bfk)$.
Applying \cref{lem:ms-prod-sum} and arguing as in the proof of \cref{lemma:ms_hocolim}, we have 
\begin{equation}\label{eq:ms_limit_legendrian}
	\rMS(F_\infty) \subset c(L_\infty) = \lim_{n \to \infty} c(L_n) \subset T^*(M \times \bR_t).
\end{equation}
Let $t \colon M \times \bR_t \to \bR_t$ and $q_\bR \colon J^1M=T^*M \times \bR_t \to \bR_t$ be the projections. 
Then we find that 
\begin{equation}
	\MS(F_\infty) \cap \Gamma_{dt} 
	\subset 
	\{ (x,t;0,1) \mid (x,0,t) \in L_\infty \}
	= (L_\infty \cap (0_M \times \bR_t)) \times \{1\}
\end{equation}
and hence
\begin{equation}
	-\Spec(F_\infty) \subset t \pi(\MS(F_\infty) \cap \Gamma_{dt}) \subset q_\bR(L_\infty \cap (0_M \times \bR_t)).
\end{equation}

\begin{proposition}
	In the situation as above, if $\Spec(F_\infty) \le \cl(M)$, then $L_\infty \cap (0_M \times \bR_t)$ is cohomologically non-trivial in $M \times \bR_t$. 
	In particular, if $\# q_\bR(L_\infty \cap (0_M \times \bR_t)) \le \cl(M)$, then $L_\infty \cap (0_M \times \bR_t)$ is cohomologically non-trivial in $M \times \bR_t$, hence it is infinite.
\end{proposition}

\begin{proof}
	By applying \cref{theorem:sheaf_spectral_invariant} to the object $F_\infty \in \cD(M)$, we obtain the result by \eqref{eq:ms_limit_legendrian} and $Q_\infty^*(F_\infty) \simeq C \otimes H^*(M;\bfk)$.
\end{proof}

\appendix

\section{Hamiltonian stability with support conditions}\label{section:support_condition}

In this appendix, we prove an estimate by the oscillation norm $\|H\|_{\mathrm{osc},A}$ of $H$ restricted to a non-empty closed subset $A$, in the context of the sheaf-theoretic energy estimate.

\subsection{Sheaf quantization of 2-parameter Hamiltonian isotopies}\label{subsec:SQ_ham}

We will use the sheaf quantization of a 2-parameter Hamiltonian isotopy. 
For that purpose, we first state the main result of \cite{GKS} in a general form. 
Let $N$ be a connected non-empty manifold and $W$ a contractible open subset of $\bR^n$ with the coordinate system $(w_1,\dots,w_n)$ containing $0$. 
Let us consider $\psi =(\psi_w)_{w \in W} \colon \rT N \times W \to \rT N$ be a homogeneous Hamiltonian isotopy, that is, a $C^\infty$-map satisfying (1)~$\psi_w$ is homogeneous symplectic isomorphism for each $w \in W$ and (2)~$\psi_0=\id_{\rT N}$.
We can define a vector-valued homogeneous function $h \colon \rT N \times W \to \bR^n$ by $h=(h_1,\dots,h_n)$ with 
\begin{equation}
	\frac{\partial \psi_w}{\partial w_i} \circ \psi_{w}^{-1} = X_{h_i(\bullet,w)},
\end{equation}
where $X_{h_i(\bullet,w)}$ is the Hamiltonian vector field of the function $h_i(\bullet,w) \colon \rT N \to \bR$.
By using the function $h$, we define a conic Lagrangian submanifold $\Lambda_\psi$ of $\rT (N^2 \times W)$ by 
\begin{equation}
	\Lambda_\psi \coloneqq \lc \lb \psi_w(y;\eta), (y;-\eta), (u;-h(\psi_w(y;\eta),w)) \rb \relmid (y;\eta) \in \rT N, w \in W \rc
\end{equation}
The main theorem of \cite{GKS} is the following.

\begin{theorem}[{\cite[Thm.~3.7 and Rem.~3.9]{GKS}}]\label{thm:GKSmain_general}
	Let $\psi \colon \rT N \times W \to \rT N$ be a homogeneous Hamiltonian isotopy and set $\Lambda_\psi$ as above. 
	Then there exists a unique simple object $\tl{K} \in \SD(\bfk_{N^2 \times W})$ such that $\rMS(\tl{K})=\Lambda_{\psi}$ and $\tl{K}|_{N^2 \times \{0\}} \simeq \bfk_{\Delta_{N}}$.
\end{theorem}

For a non-homogeneous compactly supported Hamiltonian isotopy, we can associate a sheaf by homogenizing the isotopy.
In the 1-parameter case, it is done as in \cref{definition:homogeneous_ham}.
Below we will explain how to homogenize a 2-parameter Hamiltonian isotopy.

Let $(G_{s',s})_{(s',s) \in I^2}$ be a 2-parameter family of compactly supported smooth functions on $T^*M$. 
A 2-parameter family of diffeomorphisms $(\phi_{s',s})_{(s',s)\in I^2}$ is determined by $\phi_{s',0}=\id_{T^*M} $ and $\frac{\partial \phi_{s',s} }{\partial s}\circ \phi_{s',s}^{-1}=X_{G_{s',s}}$, where $X_{G_{s',s}}$ is the Hamiltonian vector field corresponding to the function $G_{s',s}$. 
We set $\widehat{G}_{s',s}(x,t;\xi,\tau) \coloneqq \tau G_{s',s}(x;\xi/\tau)$ and define 
a 2-parameter homogeneous Hamiltonian isotopy $\wh{\phi}=(\widehat{\phi}_{s',s})_{s',s}$ by 
\begin{equation}
	\begin{cases}
		\widehat{\phi}_{s',0}=\id_{\rT(M \times \bR_t)}, \\
		\frac{\partial \widehat{\phi}_{s',s}}{\partial s} \circ \widehat{\phi}_{s',s}^{-1} = X_{\widehat{G}_{s',s}}.
	\end{cases}
\end{equation}
Then, we have
\begin{equation}
	\begin{aligned}
		\Lambda_{\wh{\phi}} = & \left\{ \lb \widehat{\phi}_{s',s}(y;\eta), (y;-\eta), (s'; -\widehat{F}_{s',s}(\widehat{\phi}_{s',s}(y;\eta))), (s;- \widehat{G}_{s',s}(\widehat{\phi}_{s',s}(y;\eta))) \rb \relmid \right. \\ 
		& \hspace{10pt} \left. (y;\eta) \in \rT(M \times \bR), s',s \in I \right\},
	\end{aligned}
\end{equation}
where the 2-parameter family of homogeneous functions $(\widehat{F}_{s',s})_{s',s}$ is determined by $\frac{\partial \widehat{\phi}_{s',s}}{\partial s'} \circ \widehat{\phi}_{s',s}^{-1}=X_{\widehat{F}_{s',s}}$.
By the construction of $\widehat{\phi}$, there exists a 2-parameter family of timewise compactly supported functions $(F_{s',s})_{(s',s) \in I^2}$ satisfying $\widehat{F}_{s',s}(x,t;\xi,\tau)=\tau F_{s',s}(x;\xi/\tau)$ and $\frac{\partial \phi_{s',s} }{\partial s'}\circ \phi_{s',s}^{-1}=X_{F_{s',s}}$.
A calculation in \cite{polterovich2012geometry} or \cite{Oh02normalization} (see also \cite{banyaga1978structure}) shows that 
\begin{equation}\label{eq:another_function}
	\frac{\partial F_{s',s}}{\partial s}=\frac{\partial G_{s',s}}{\partial s'}-\{F_{s',s},G_{s',s}\},
\end{equation}
where $\{-,-\}$ is the Poisson bracket. 
In this case, we can apply \cref{thm:GKSmain_general} to the homogeneous Hamiltonian isotopy $\wh{\phi}$ and obtain a simple object $\tl{K}$ satisfying $\rMS(\tl{K})=\Lambda_{\wh{\phi}}$.
By using the map $q \colon (M \times \bR)^2 \times I^2 \to M^2 \times \bR_t \times I^2, (x_1,t_1,x_2,t_2,s',s) \mapsto (x_1,x_2,t_1-t_2,s',s)$, we also obtain an equivalence similar to \eqref{eq:equivalence_GKS}. 
Hence, we can define $K \in \SD(\bfk_{M^2 \times \bR \times I^2})$ by the condition $\rMS(K)=q_d q_\pi^{-1} (\Lambda_{\wh{\phi}})$ and $\cK \coloneqq P_l(K) \in \cD(M^2 \times I^2)$, which we call the \emph{sheaf quantization} of $(\phi_{s',s})_{(s',s)\in I^2}$.

\subsection{Statement and proof}

For a closed subset $A$ of $T^*M$, we define a full subcategory $\cD_A(M)$ of $\cD(M)$ by 
\begin{equation}
	\cD_A(M) \coloneqq \{ F \in \cD(M) \mid \MS(F) \cap \{ \tau >0\} \subset \rho_t^{-1}(A) \},
\end{equation}
where $\rho_t \colon T^*M \times T^*_{\tau >0}\bR_t \to T^*M, (x,t;\xi,\tau) \mapsto (x;\xi/\tau)$.

Let $\cK^H \in \cD(M^2 \times I)$ be the sheaf quantization associated with a timewise compactly supported function $H \colon T^*M \times I \to \bR$ and $F \in \cD_A(M)$ with $A$ being a closed subset of $T^*M$.
Then we get $\cK^H \bullet F \in \cD(M \times I)$ and find that 
\begin{equation}
	\cK^H_s \bullet F \simeq (\cK^H \bullet F)|_{M \times \{s\} \times \bR_t} \in \cD_{\phi^H_s(A)}(M) \quad \text{for any $s \in I$}.
\end{equation}
We shall estimate the distance between $F \in \cD_A(M)$ and $\cK^H_1 \bullet F \in \cD_{\phi^H_1(A)}(M)$ up to translation.
See also \cref{remark:weak} for a more straightforward but weaker case. 

\begin{theorem}\label{theorem:metric_support}
	Let $A$ be a non-empty closed subset of $T^*M$ and $F \in \cD_A(M)$.
	Moreover, let $H \colon T^*M \times I \to \bR$ be a timewise compactly supported function. 
	Then for a continuous function $f\colon I\to \bR$, one has
	\begin{equation}\label{eq:bound}
		\begin{aligned}
			& d_{\cD(M)}(F,T_{-c} \cK^H_1 \bullet F) \\
			\le {} & \int_0^1 \lb \max \left\{ \max_{p \in \phi^H_s(A)}H_s(p), f(s) \right\} -\min \left\{ \min_{p \in \phi^H_s(A)}H_s(p), f(s)\right\} \rb ds
		\end{aligned}	
	\end{equation}
	where $c=\int_0^1 f(s) ds$. 
\end{theorem}

\begin{remark}\label{remark:metric_support}
	If we take $f\equiv 0$, we obtain 
	\begin{equation}
		\begin{aligned}
			& d_{\cD(M)}(F, \cK^H_1 \bullet F) \\
			\le {} & \int_0^1 \lb \max \left\{ \max_{p \in \phi^H_s(A)}H_s(p), 0 \right\} -\min \left\{ \min_{p \in \phi^H_s(A)}H_s(p), 0\right\} \rb ds. 
		\end{aligned}
	\end{equation}
	
	Let $c \in \bR$ be a real number satisfying 
	\begin{equation}
		\int_0^1 \min_{p \in \phi^H_s(A)}H_s(p) ds\le c \le \int_0^1 \max_{p \in \phi^H_s(A)}H_s(p) ds. 
	\end{equation}
	Then we can take $f$ such that $c=\int_0^1 f(s) ds$ and 
	\begin{equation}
		\min_{p \in \phi^H_s(A)}H_s(p) \le f(s)\le \max_{p \in \phi^H_s(A)}H_s(p)
	\end{equation}
	for any $s \in I$. 
	Hence, by \cref{theorem:metric_support}, we get
	\begin{equation}
		d_{\cD(M)}(F, T_{-c} \cK^H_1 \bullet F)
		\le \int_0^1 \lb \max_{p \in \phi^H_s(A)}H_s(p) - \min_{p \in \phi^H_s(A)}H_s(p) \rb ds.
	\end{equation}
\end{remark}

For simplicity, we introduce a symbol for the right-hand side of \eqref{eq:bound}. 
For a function $H \colon T^*M \times I \to \bR$, a function $f \colon I \to \bR$, and a non-empty closed subset $A$ of $T^*M$, we set 
\begin{equation}
	B(H,f,A) 
	\coloneqq 
	\int_0^1 \lb \max \left\{ \max_{p \in \phi^H_s(A)}H_s(p), f(s) \right\} -\min \left\{ \min_{p \in \phi^H_s(A)}H_s(p), f(s) \right\} \rb ds. 
\end{equation}

\begin{proof}[Proof of \cref{theorem:metric_support}]
	Let $\varepsilon >0$.
	We can take a smooth family $(\rho_{a,b}\colon \bR\to \bR)_{a,b}$ of smooth functions parametrized by $a,b\in \bR$ with $a\le b$ such that 
	\begin{enumerate}
		\renewcommand{\labelenumi}{$\mathrm{(\arabic{enumi})}$}
		\item $\rho_{a,b}(y)=y$ on a neighborhood of $[a,b]$,
		\item $a-\varepsilon \le \inf_y \rho_{a,b}(y) < \sup_y \rho_{a,b}(y) \le b+\varepsilon$. 
	\end{enumerate}
	Recall that $I$ denotes an open interval containing the closed interval $[0,1]$.
	We take smooth functions $M, m \colon I\to \bR$ satisfying 
	\begin{equation}
		\max_{p \in \phi^H_s(A)}H_s(p) +\frac{\varepsilon}{2} \le M(s) \le \max_{p \in \phi^H_s(A)}H_s(p)  +\varepsilon
	\end{equation}
	and
	\begin{equation}
		\min_{p \in \phi^H_s(A)}H_s(p) -\varepsilon \le m(s) \le \min_{p \in \phi^H_s(A)}H_s(p)-\frac{\varepsilon}{2}.
	\end{equation}
	Fix $R>0$ sufficiently large so that $R> \max_{p,s}H_s(p) -\min_{p,s}H_s(p) +2\varepsilon$.
	Define $a(s', s)\coloneqq m(s)-Rs', b(s',s)\coloneqq M(s)+Rs'$ for $(s',s)\in I^2$. 
	We may assume that $I\subset (-\frac{\varepsilon}{2R},+\infty)$ by taking $I$ smaller if necessary, and hence that 
	\begin{equation}
		a(s', s) \le \min_{p \in \phi^H_s(A)}H_s(p) \le \max_{p \in \phi^H_s(A)}H_s(p) \le b(s',s)
	\end{equation}
	for all $(s',s)\in I^2$. 
	Take a smooth function $\tilde{f}\colon I\to \bR$ such that $\|\tilde{f}-f\|_{C^0} \le \varepsilon$. 
	By shrinking $I$, we may assume that $\bigcup_{s} \supp(H_s)$ is relatively compact.
	Then we can also take a compactly supported smooth cut-off function $\chi \colon T^*M\to [0,1]$ such that $\chi\equiv 1$ on a neighborhood of $\bigcup_{s} \supp(H_s)$.
	Using these functions, we define a function $G=(G_{s',s})_{s',s \in I^2} \colon T^*M \times I^2 \to \bR$ by 
	\begin{equation}
		G_{s',s}\coloneqq \left(\rho_{a(s',s),b(s',s)}\circ H_s - (1-s') \tilde{f}(s)\right) \chi.
	\end{equation}
	A 2-parameter family $(\phi_{s',s})_{(s',s)\in I^2}$ of Hamiltonian diffeomorphisms is determined by $\phi_{s',0}=\id_{T^*M} $ and $\frac{\partial \phi_{s',s} }{\partial s}\circ \phi_{s',s}^{-1}=X_{G_{s',s}}$, where $X_{G_{s',s}}$ is the Hamiltonian vector field corresponding to the function $G_{s',s}$. 
	Note that $G_{1,s}=H_s$ and $\phi_{s',s}$ is independent of $s'$ on a neighborhood $U$ of $A$. 
	Moreover, we have 
	\begin{equation}
		\begin{aligned}
			& \int_0^1 \lb \max_{p \in T^*M} G_{0,s}(p) - \min_{p \in T^*M} G_{0,s}(p) \rb ds \\
			\le {} & 
			\int_0^1 \lb \max_{p \in T^*M} \left( \rho_{m(s),M(s)} \circ H_s(p) - \tilde{f}(s) \right)\chi(p) - \min_{p \in T^*M} \left( \rho_{m(s),M(s)} \circ H_s(p) - \tilde{f}(s) \right) \chi(p) \rb ds
			\\ 
			\le {} & 
			\int_0^1 \lb \max \left\{ \max_{p \in \phi^H_s(A)} \left( H_s(p) - \tilde{f}(s) \right), 0 \right\} -\min \left\{ \min_{p \in \phi^H_s(A)} \left( H_s(p) - \tilde{f}(s) \right), 0\right\} \rb ds +2\varepsilon \\
			= {} & 
			\int_0^1 \lb \max \left\{ \max_{p \in \phi^H_s(A)} H_s(p), \tilde{f}(s) \right\} -\min \left\{ \min_{p \in \phi^H_s(A)} H_s(p), \tilde{f}(s) \right\} \rb ds +2\varepsilon \\
			\le {} &
			B(H,f,A) +4\varepsilon.
		\end{aligned}
	\end{equation}
	For $s' \in I$, we set $G_{s'} \coloneqq G_{s',\bullet} \colon T^*M \times I \to \bR$.
	Then, by \cref{theorem:SQ_inequality} and the natural inequality for the distance with respect to functorial operations (see \eqref{eqn:bullet_distance}), we obtain 
	\begin{equation}
		d_{\cD(M)}(F, \cK^{G_0}_1 \bullet F) = d_{\cD(M)}(\cK^0_1 \bullet F, \cK^{G_0}_1 \bullet F) 
		\le 
		B(H,f,A) +4\varepsilon.
	\end{equation}
	
	We set $\tilde{c}(s)\coloneqq  \int_0^s \tilde{f}(t) dt$ and claim that $\cK^{G_0}_1 \bullet F \simeq T_{-\tilde{c}(1)}\cK^{G_1}_1 \bullet F$. 
	By the result recalled in \cref{subsec:SQ_ham}, we can construct the sheaf quantization $\cK \in \cD(M^2 \times I^2)$ of the 2-parameter family of diffeomorphisms $(\phi_{s',s})_{s',s}$. 
	We shall use the same notation as in \cref{subsec:SQ_ham}.
	Then, $F_{s',0}=0$ and $F_{\bullet,s}|_{\phi_{1,s}(U)\times I}\colon \phi_{1,s}(U)\times I\to \bR, (p,s')\mapsto F_{s',s}(p)$ is locally constant for each $s$. 
	By \eqref{eq:another_function}, we find that $\frac{\partial F_{s',s}}{\partial s}=\frac{\partial G_{s',s}}{\partial s'}=\tilde{f}(s)$ on $\bigcup_s \phi_{1,s}(U)\times I\times \{s\}$ and that $F_{s',s} = \int_0^s \tilde{f}(t) dt=\tilde{c}(s)$ there. 
	We define $\cH \coloneqq \cK \bullet F \in \cD(M \times I^2)$. 
	Then, by the microsupport estimate, we have 
	\begin{equation}
		\begin{aligned}
			\MS(\cH) 
			\subset 
			& \left\{ \lb \widehat{\phi}_{s',s}(x,t;\xi,\tau), (s';-\tau \tilde{c}(s)), (s;-\tau G_{s',s}(\phi_{s',s}(x;\xi/\tau))) \rb \; \middle| \right. \\ 
			& \hspace{10pt} \left. (x,t;\xi,\tau) \in \rMS(F), s',s \in I \right\} 
			\cup 0_{M \times \bR \times I^2}.
		\end{aligned}  
	\end{equation}
	Hence, $M \times \bR \times I \times \{1\}$ is non-characteristic for $\cH$ and we get 
	\begin{equation}
		\begin{aligned}
			\MS(\cH|_{M \times \bR \times I \times \{1\}})
			\subset & 
			\left\{ \lb \widehat{\phi}_{s',1}(x,t;\xi,\tau), (s';- \tilde{c}(1) \tau) \rb \; \middle| \right. \\
			& \quad \left. (x,t;\xi,\tau) \in \rMS(F), s' \in I \right\} 
			\cup 0_{M \times \bR \times I}.
		\end{aligned}       
	\end{equation}
	Define a diffeomorphism $\varphi \colon M \times \bR \times I \simto M \times \bR \times I, (x,t,s') \mapsto (x,t-\tilde{c}(1)s', s')$. 
	Then we have $\MS(\varphi_* \cH|_{M \times \bR \times I \times \{1\}}) \subset T^*(M \times \bR) \times 0_{I}$, which shows $\varphi_* \cH|_{M \times \bR \times I \times \{1\}}$ is the pull-back of a sheaf on $M \times \bR$ by \cite[Prop.~5.4.5]{KS90}.
	In particular, 
	\begin{equation}
		\begin{aligned}
			\cK^{G_0}_1 \bullet F 
			& = 
			\cH|_{M \times \bR \times \{0\} \times \{1\}} \\
			& \simeq 
			(\varphi_* \cH|_{M \times \bR \times I \times \{1\}})|_{\{ s'=0\}} \\
			& \simeq 
			(\varphi_* \cH|_{M \times \bR \times I \times \{1\}})|_{\{ s'=1\}} \\
			& \simeq 
			T_{-\tilde{c}(1)}\cH|_{M \times \bR \times \{1\} \times \{1\}} 
			=
			T_{-\tilde{c}(1)} \cK^{G_1}_1 \bullet F. 
		\end{aligned}
	\end{equation}
	
	Since $|c-\tilde{c}(1)| \le \varepsilon$, we have $d_{\cD(M)}(T_{-c}\cK^H_1 \bullet F, T_{-\tilde{c}(1)}\cK^H_1 \bullet F) \le \varepsilon$.
	Combining the result above and noticing $G_1=H$, we obtain 
	\begin{equation}
		\begin{aligned}
			d_{\cD(M)}(F, T_{-c} \cK^H_1 \bullet F)
			& \le 
			d_{\cD(M)}(F, T_{-\tilde{c}(1)} \cK^H_1 \bullet F) + \varepsilon \\
			& = 
			d_{\cD(M)}(F, \cK^{G_0}_1 \bullet F) + \varepsilon \\
			& \le 
			B(H,f,A) +5\varepsilon.
		\end{aligned}
	\end{equation}
	Since $\varepsilon>0$ is arbitrary, this completes the proof.
\end{proof}

\begin{remark}\label{remark:weak}
	Under the same assumption as in \cref{theorem:metric_support}, we can prove the weaker result 
	\begin{equation}
		d_{\mathrm{w\text{-}isom}}(F,T_{-c} \cK^H_1 \bullet F) \le B(H,f,A) 
	\end{equation}
	more straightforwardly, without the 2-parameter family, as follows. 
	Here $d_{\mathrm{w\text{-}isom}}$ denotes the pseudo-distance on $\cD(M)$ defined by 
	\begin{equation}
		d_{\mathrm{w\text{-}isom}}(F,G) \coloneqq \inf \lc a+b \relmid \text{$(F,G)$ is weakly $(a,b)$-isomorphic} \rc. 
	\end{equation}
	
	We set $\cH \coloneqq \cK^H \bullet F \in \cD(M \times I)$. 
	Then we have $\cH|_{M \times \bR_t \times \{0\}} \simeq F$ 
	and $\cH|_{M \times \bR_t \times \{1\}} \simeq \cK^H_1 \bullet F$.
	Moreover, by the microsupport estimate, we find that 
	\begin{equation}
		\MS(\cH) 
		\subset T^*M \times \left\{ (t,s;\tau,\sigma) \;\middle|\;  -\max_{p \in \phi^H_s(A)} H_s(p) \cdot \tau \le \sigma \le -\min_{p \in \phi^H_s(A)} H_s(p) \cdot \tau \right\}.
	\end{equation}
	Let $\varepsilon>0$ and take a smooth function $\tilde{f}\colon I\to \bR$ such that $\|\tilde{f}-f\|_{C^0} \le \varepsilon$. 
	We define a function $\tilde{c} \colon I \to \bR$ by $\tilde{c}(s) \coloneqq \int_0^s \tilde{f}(s')ds'$ and    
	a function $\varphi \colon M \times \bR_t \times I  \to M \times \bR_t \times I$ by $\varphi(x,t,s) \coloneqq (x,t-\tilde{c}(s),s)$.
	Then we have $\varphi_* \cH|_{M \times \bR_t \times \{0\}} \simeq F$, $\varphi_* \cH|_{M \times \bR_t \times \{1\}} \simeq T_{-\tilde{c}(1)}\cK^H_1 \bullet F$, and     
	\begin{equation}
    \begin{aligned}
        & \MS(\varphi_* \cH) \\
        & \quad \subset 
        T^*M \times \left\{ (t,s;\tau,\sigma) \;\middle|\; - \lb \max_{p \in \phi^H_s(A)} H_s(p) -\tilde{f}(s) \rb \cdot \tau \le \sigma \right. \\ 
        & \hspace{125pt} \left. \le - \lb \min_{p \in \phi^H_s(A)} H_s(p) - \tilde{f}(s) \rb \cdot \tau \right\}.
    \end{aligned} 
	\end{equation}
	Note that we may have $\max_{p \in \phi^H_s(A)} H_s(p) -\tilde{f}(s) < 0$ and $\min_{p \in \phi^H_s(A)} H_s(p) - \tilde{f}(s) >0$ in general.
	By applying \cref{proposition:abisomhtpy}, we obtain 
	\begin{equation}
		\begin{aligned}
			& d_{\mathrm{w\text{-}isom}}(F,T_{-\tilde{c}(1)}\cK^H_1 \bullet F) \\
			= {} &  
			d_{\mathrm{w\text{-}isom}}(\varphi_* \cH|_{M \times \bR_t \times \{0\}}, \varphi_* \cH|_{M \times \bR_t \times \{1\}}) \\
			\le {} & 
			\int_0^1 \lb \max \left\{ \max_{p \in \phi^H_s(A)} \left( H_s(p) - \tilde{f}(s) \right), 0 \right\} -\min \left\{ \min_{p \in \phi^H_s(A)} \left( H_s(p) - \tilde{f}(s) \right), 0\right\} \rb ds \\
			= {} &
			\int_0^1 \lb \max \left\{ \max_{p \in \phi^H_s(A)} H_s(p), \tilde{f}(s) \right\} -\min \left\{ \min_{p \in \phi^H_s(A)} H_s(p), \tilde{f}(s) \right\} \rb ds \\
			\le {} & 
			\int_0^1 \lb \max \left\{ \max_{p \in \phi^H_s(A)}H_s(p), f(s) \right\} -\min \left\{ \min_{p \in \phi^H_s(A)}H_s(p), f(s)\right\} \rb ds + 2\varepsilon.
		\end{aligned}
	\end{equation}
	Hence, we have 
	\begin{equation}
		\begin{aligned}
			& d_{\mathrm{w\text{-}isom}}(F, T_{-c}\cK^H_1 \bullet F) 
			\le d_{\mathrm{w\text{-}isom}}(F, T_{-\tilde{c}(1)} \cK^H_1 \bullet F) + \varepsilon \\
			\le {} &
			\int_0^1 \lb \max \left\{ \max_{p \in \phi^H_s(A)}H_s(p), f(s) \right\} -\min \left\{ \min_{p \in \phi^H_s(A)}H_s(p), f(s)\right\} \rb ds + 3\varepsilon, 
		\end{aligned}
	\end{equation}
	which completes the proof.    
\end{remark}

For a timewise compactly supported function $H \colon T^*M \times I \to \bR$ and a non-empty closed subset $A$ of $T^*M$, we set 
\begin{equation}
	\|H\|_{\mathrm{osc},A} \coloneqq \int_0^1 \left(\max_{p \in A} H_s(p) - \min_{p \in A} H_s(p) \right) ds.
\end{equation}

\begin{proposition}
	Let $A$ be a non-empty closed subset of $T^*M$ and $F \in \cD_A(M)$.
	Moreover, let $H \colon T^*M \times I \to \bR$ be a timewise compactly supported function. 
	Then there exists $c \in \bR$ such that 
	\begin{equation}
		d_{\cD(M)}(F,T_{-c}\cK^H_1 \bullet F)
		\le \|H\|_{\mathrm{osc},A}.
	\end{equation}
\end{proposition}

\begin{proof}
	Using the technique in the proof of \cite[Theorem~1.3]{usher2015observations}, one can construct a function $H'$ such that $\phi^H_1=\phi^{H'}_1$ and 
	\begin{equation}
    \begin{aligned}
        & \int_0^1 \lb \max_{p \in A}H_s(p) - \min_{p \in A}H_s(p) \rb ds \\
        ={} & \int_0^1 \lb \max_{p \in \phi^{H'}_s(A)}H'_s(p) - \min_{p \in \phi^{H'}_s(A)}H'_s(p) \rb ds.
    \end{aligned}
	\end{equation}
	By \cref{proposition:dependence_time_one}, $\phi^H_1=\phi^{H'}_1$ implies $\cK^H_1 \simeq \cK^{H'}_1$. 
	Hence, the result follows from \cref{theorem:metric_support} (see also \cref{remark:metric_support}).
\end{proof}

\printbibliography

\noindent Tomohiro Asano: 
Research Institute for Mathematical Sciences, Kyoto University, \linebreak Kitashirakawa-Oiwake-Cho, Sakyo-ku, 606-8502, Kyoto, Japan.

\noindent \textit{E-mail address}: \texttt{tasano[at]kurims.kyoto-u.ac.jp}, \texttt{tomoh.asano[at]gmail.com}

\medskip

\noindent Yuichi Ike:
Institute of Mathematics for Industry, Kyushu University, 744 Motooka Nishi-ku, 819-0395, Fukuoka, Japan.

\noindent
\textit{E-mail address}: \texttt{ike[at]imi.kyushu-u.ac.jp}, \texttt{yuichi.ike.1990[at]gmail.com}

\end{document}